\definecolor{lilla}{RGB}{199,85,255}
\newtheorem{maintheorem}{Theorem}	
\newtheorem{maincorollary}[maintheorem]{Corollary}	
\newtheorem{theorem}{Theorem}[section]
\newtheorem*{theorem*}{Theorem}
\newtheorem{lemma}[theorem]{Lemma}
\newtheorem*{lemma*}{Lemma}
\newtheorem{proposition}[theorem]{Proposition}
\newtheorem{corollary}[theorem]{Corollary} 
\newtheorem*{corollary*}{Corollary}
\theoremstyle{definition}
\newtheorem{definition}[theorem]{Definition}
\newtheorem{example}[theorem]{Example}
\newtheorem*{example*}{Example}
\newtheorem{remark}[theorem]{Remark}
\newtheorem*{remark*}{Remark}
\newtheorem{question}[theorem]{Question}
\newcommand{\R}{\mathbb{R}}
\newcommand{\ZZ}{\mathbb{Z}}
\newcommand{\xdownarrow}[1]{%
  {\left\downarrow\vbox to #1{}\right.\kern-\nulldelimiterspace}
}
\newcommand{\tGa}{\widetilde{\Gamma}}
\newcommand{\tga}{\widetilde{\gamma}}
\newcommand{\Ga}{\Gamma}
\newcommand{\La}{\Lambda}
\newcommand{\Si}{\Sigma}
\newcommand{\tj}{\widetilde{\jmath}}
\DeclareMathOperator{\Hom}{Hom}
\DeclareMathOperator{\rk}{rk}
\DeclareMathOperator{\Sym}{Sym}
\newcommand{\calA}{\mathcal{A}}
\newcommand{\calB}{\mathcal{B}}
\newcommand{\calC}{\mathcal{C}}
\newcommand{\calM}{\mathcal{M}}
\newcommand{\calT}{\mathcal{T}}
\newcommand{\wt}[1]{\widetilde{#1}}
\DeclareMathOperator{\val}{val}
\DeclareMathOperator{\Prym}{Prym}
\let\ddiv\relax
\DeclareMathOperator{\ddiv}{div}
\DeclareMathOperator{\Jac}{Jac}
\title{The tropical Abel--Prym map}
  \author{Giusi Capobianco}
  \address{Department of Mathematics\\ University of Roma Tor Vergata \\ 00133 Rome, Italy}
 \email{\href{mailto:capobianco@axp.mat.uniroma2.it}{capobianco@axp.mat.uniroma2.it}}
  \author{Yoav Len}
   \address{Mathematical Institute, University of St Andrews, St Andrews KY16 9SS, UK}
 \email{\href{mailto:yoav.len@st-andrews.ac.uk}{yoav.len@st-andrews.ac.uk}}
\begin{document}

\begin{abstract} 
 We prove that, under mild assumptions, the tropical Abel--Prym map $\Psi\colon \tGa\to\Prym(\tGa/\Ga)$ associated with a free double cover $\pi\colon \tGa\to \Ga$ is  harmonic  of degree $2$ if and only if the source graph $\wt\Gamma$ is hyperelliptic. This is in accordance with the  already established  algebraic result. 
 In this case, the Abel--Prym graph $\Psi(\tGa)$  is   hyperelliptic   of genus $g_{\Ga}-1$ and its Jacobian is isomorphic, as a pptav, to the Prym variety of the cover. We further show that the Abel--Prym graph coincides with a connected component of the tropical bigonal construction. 
 En route, we count the number of distinct free double covers by hyperelliptic metric graphs.  
\end{abstract}
\maketitle

\setcounter{tocdepth}{1}
\tableofcontents


\section{Introduction}

The Abel--Prym map relates the geometry of a  double cover of algebraic curves or metric graphs  with their corresponding Prym variety. Given a double cover $\wt X\to X$, 
the map is given by 

\[
 \begin{tabular}{cccc}	    $\Psi\colon$&$\Sym^d(\wt X)$&$\longrightarrow$&$\Prym(\wt X/X)$  \\
	         &$E$&$\mapsto$&$[E - \iota(E)]$ \\
	    \end{tabular},
\]
where $E$ is an effective divisor of degree $d$. It is the Prym-theoretic analogue of the Abel--Jacobi map.

The map plays a fundamental role in understanding the geometry of tropical Prym varieties. 
For instance, via a careful study of the $d$-fold Abel--Prym map  when $d=g-1$, 
the second author and Zakharov establish a weighted version of the Kirchhoff--Prym   formula and derive a geometric interpretation for the volume of the Prym variety of a free double cover of metric graphs \cite[Theorem 3.4]{LZ22}. 
They show that the Abel--Prym map is a \emph{harmonic morphism of polyhedral spaces} of global degree $2^{g-1}$ and determine the local degree at every cell of the decomposition, leading to a natural finite set of representatives  for elements of the Prym variety (cf. the case of Jacobians, where there is a unique choice of representative). 
As a counterpart, the appendix by Sebastian Casalaina-Martin 
shows that the degree of the algebraic Abel--Prym map is $2^{g-1}$ as well.
Motivated by this similarity, the second author and Zakharov
conjectured that the algebraic and the tropical Abel--Prym maps have the same degree for $d\leq g-2$ \cite[Conjecture 1.1]{LZ22}. 

In this paper we focus on the case $d=1$ for free double covers. 

For brevity, for the rest of this paper, we  refer to the 1-fold Abel--Prym map simply as the \emph{Abel--Prym map}. 
Our first result relates the degree of the Abel--Prym map with the structure of the graph $\wt\Gamma$.

\begin{maintheorem}\label{maintheorem:degreeOfAbelPrym}[\cref{thm:ifHyperelliptic}]
    Let $\pi\colon \tGa\to \Gamma$ be a free double cover of  metric graphs, where $\wt\Gamma$ is 2-edge-connected of genus at least 2.  Then $\wt\Gamma$ is hyperelliptic if and only if the Abel--Prym map  
    is a harmonic morphism of degree 2.
    \end{maintheorem}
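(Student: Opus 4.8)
The plan is to reduce everything to the factorization $\Psi = (1-\iota_*)\circ\alpha$, where $\alpha\colon\tGa\to\Jac(\tGa)$ is the tropical Abel--Jacobi map (injective, since $\tGa$ is $2$-edge-connected) and $\iota_*$ is the involution induced on $\Jac(\tGa)$. This immediately gives $\Psi(\iota\tp)=-\Psi(\tp)$, and it identifies the fibers of $\Psi$ with the relation
\[
\Psi(\tp)=\Psi(\widetilde q)\iff \tp+\iota\widetilde q\sim \widetilde q+\iota\tp \text{ on } \tGa .
\]
Thus ``$\Psi$ has global degree $2$'' means precisely that a generic fiber consists of two points $\{\tp,\tau\tp\}$, defining a generically-defined involution $\tau$ on $\tGa$; the whole theorem becomes an analysis of this $\tau$ and of the local degrees of $\Psi$.

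For the implication \emph{hyperelliptic $\Rightarrow$ degree $2$}, I would first invoke uniqueness of the hyperelliptic involution $\sigma$ on a $2$-edge-connected graph of genus $\ge 2$; being unique it is normalized by the automorphism $\iota$, so $\sigma$ and $\iota$ commute. Writing $g^1_2$ for the constant class with $\tp+\sigma\tp\sim g^1_2$ for all $\tp$, a one-line computation using $\sigma\tp\sim g^1_2-\tp$ and $\iota\sigma=\sigma\iota$ gives
\[
\Psi(\sigma\tp)=[\sigma\tp-\iota\sigma\tp]=[\iota\tp-\tp]=-\Psi(\tp)=\Psi(\iota\tp),
\]
so $\Psi\circ\sigma=\Psi\circ\iota$, and hence $\Psi$ is invariant under $\tau:=\sigma\iota=\iota\sigma$. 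Therefore $\Psi$ factors through the degree-$2$ quotient $\tGa\to\tGa/\langle\tau\rangle$, and it remains to check that the induced map $\tGa/\langle\tau\rangle\to\Prym(\tGa/\Ga)$ is harmonic and generically injective (no fibers collapse beyond the $\tau$-pairs), whence $\deg\Psi=\deg\bigl(\tGa\to\tGa/\langle\tau\rangle\bigr)=2$. Harmonicity I would verify through the balancing condition at each vertex, reading slopes off the edge-images in the torus, or by citing the earlier structural description of $\Psi$.

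For the converse \emph{degree $2$ $\Rightarrow$ hyperelliptic}, I would start from the generically-defined $\tau$ above, extend it to an isometric involution of $\tGa$ (using properness and finiteness of the harmonic morphism $\Psi\colon\tGa\to\bar\Gamma:=\Psi(\tGa)$, of which $\tau$ is the deck involution), and form the Klein four-group $\langle\iota,\tau\rangle$ with third involution $\sigma:=\iota\tau$. The goal is to show $\sigma$ is the hyperelliptic involution, equivalently that $\tGa/\langle\sigma\rangle$ is a tree, equivalently that $\sigma_*=-\Id$ on $\Jac(\tGa)$. From $\Psi\circ\tau=\Psi$ one obtains $\Psi\circ\sigma=-\Psi$, which forces $\sigma_*=-\Id$ on the anti-invariant part $\Prym(\tGa/\Ga)$; the content is to upgrade this to all of $\Jac(\tGa)$, that is, to the $\iota$-invariant part, which is exactly where the hypothesis that the degree equals $2$ (and not more) must be spent.

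The main obstacle is precisely this last upgrade: controlling the $\iota$-invariant part of $\sigma_*$, equivalently proving that a non-hyperelliptic $\tGa$ cannot achieve the minimal degree $2$, so that its Abel--Prym map must have degree strictly greater than $2$. I expect to handle it by a direct local-degree computation on each edge of $\tGa$, expressing the local degree of $\Psi$ through the slopes of the edge-images in $\Prym(\tGa/\Ga)$ and summing over a generic fiber, and showing that attaining the minimal total $2$ forces the genus bookkeeping $g_{\tGa}=2g_{\Ga}-1$, $g_{\bar\Gamma}=g_{\Ga}-1$ to be realized by a degree-$2$ map to a tree after quotienting by $\sigma$. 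The finitely many special points --- the $2$-torsion fibers where $\Psi(\tp)=\Psi(\iota\tp)$, and any fixed points of $\tau$ --- must be treated separately, since the generic two-point fiber picture degenerates there.
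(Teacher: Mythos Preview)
Your forward direction starts from the same observation as the paper: the map $\tau=\iota\circ\tj$ satisfies $\Psi\circ\tau=\Psi$, so $\Psi$ factors through the quotient by $\tau$. But the two steps you defer are exactly the substance of the proof. First, you need that no fiber of $\Psi$ is strictly larger than a $\tau$-orbit; the paper obtains this from a non-obvious combinatorial fact (Proposition~\ref{prop:fibreSize}, via Lemma~\ref{lem:disconnectingPairs} on disconnecting pairs) and then Lemma~\ref{shapeOfFibres}. Second, harmonicity is not automatic from the factorization through $\tGa/\langle\tau\rangle$: Example~\ref{ex:hyperellipticType} shows that $\Psi$ can have two-point fibers on part of the graph and still fail the balancing condition at a vertex. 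The paper's proof of Theorem~\ref{thm:ifHyperelliptic} is a vertex-by-vertex case analysis using the edge classification of Theorem~\ref{thm:behaviousOnEdges}; your ``citing the earlier structural description'' presumably means the same thing, but the work does have to be done.

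Your converse has a genuine gap, which you yourself name: from $\Psi\circ\sigma=-\Psi$ you only get $\sigma_*=-\Id$ on the anti-invariant part $\Prym(\tGa/\Ga)$, and you have no mechanism to force $\sigma_*=-\Id$ on the $\iota$-invariant part $\pi^*\Jac(\Ga)$. Your suggested fix via ``local-degree computation'' and ``genus bookkeeping'' is not a plan; nothing in the degree-$2$ hypothesis speaks directly to the action of $\sigma$ on pullback classes. The paper avoids this entirely by working on the graph rather than on the Jacobian: it defines $\tj(p)=\iota q$ where $q$ is the other point in the $\Psi$-fiber (with a separate definition on the $\Psi$-collapsible locus), checks continuity using harmonicity at vertices, and then observes directly that $p+\tj p\simeq q+\tj q$, so the divisor $p+\tj p$ moves. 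This is a three-line argument once $\tj$ is written down, and it sidesteps the Jacobian decomposition completely. Your $\sigma=\iota\tau$ is the same map as the paper's $\tj$, so you have the right object; the point is that checking ``$p+\tj p$ moves'' is far easier than checking ``$\sigma_*=-\Id$ on $\Jac(\tGa)$''.
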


Note that, even when  the Abel--Prym map is a  harmonic cover, it will often be ramified. \cref{maintheorem:degreeOfAbelPrym}
 is consistent with the analogous algebraic result \cite[Proposition 12.5.2(b)]{BL04}. 
However, we stress that tropically having degree 2 does not mean that the fibre over every point consists of two distinct points, since harmonic morphisms also allow for edge dilations and contractions. 
Furthermore, 
there are free double covers of non-hyperelliptic graphs in which the associated Abel--Prym map is not injective, see \cref{ex:hyperellipticType}. In fact, the Abel--Prym map turns out to not even be harmonic in general.

As exhibited in \cref{ex:bridgesRuinEverything}, the no-bridges requirement is necessary. This is not a phenomenon 
unique to Prym varieties: when a graph has bridges, the Abel--\emph{Jacobi} map is often not harmonic onto its image as well. The Abel--Prym map, however, may contract more than just bridges, see \cref{lem:contractedEdges}.
Somewhat surprisingly, the latter type of collapsible subgraphs does not interfere with harmonicity.

It is insightful to note that hyperellipticity plays a role in the analogous case for Jacobians as well. Indeed,  it is a direct result of Clifford's theorem (\cite[Chapter III]{ACGHI}  and \cite{Coppens_Clifford,Len_Clifford}) that a curve is  hyperelliptic if and only if the $2$-fold Abel--Jacobi map $\Sym^2(C)\to\Jac(C)$ is not injective. As a consequence, one can use  tropical techniques to study the extension of the  
universal Abel--Jacobi map to the boundary of the moduli space of curves \cite{AbreuAndriaPacini_AbelMaps}. In a similar way, it would be interesting to apply the tropical Abel--Prym map to study the algebraic universal Abel--Prym map. 
In this sense, the distinction between the algebraic and tropical versions should be seen not as a bug, but rather a feature, as it can be used to detect and explain phenomena related to  degenerations of double covers to the boundary of the moduli space of admissible covers.

In the final two sections, we continue to investigate the structure of hyperelliptic double covers and their Abel--Prym map. We refine a common construction of double covers of graphs in order to characterise the ones that are hyperelliptic. 
As a consequence,  \cref{cor:numberOfHyperelliptic} shows that, among the $2^g$ free double covers $\tGa\to \Ga$ of a hyperelliptic metric graph $\Ga$ of genus $g$, the source graph $\wt\Gamma$ is only hyperelliptic in $g+1$ of them. 
This is consistent with the algebraic result due to \cite{HMfarkashypcovers} that among the $2^{2g}$ étale double covers of a hyperelliptic smooth curve, only $\binom{2g+2}{2}$ are hyperelliptic. Indeed, the $2g+2$ fixed points tropicalise in pairs to the $g+1$ fixed points of the hyperelliptic involution on $\Ga$.
Since ${2g+2\choose 2}=(g+1)(2g+1)$, it seems plausible to speculate that the factor $(2g+1)$ is the lifting multiplicity of each double cover of  tropical hyperelliptic graphs arising from some Hurwitz count. But we leave such questions for future work.

The construction also naturally yields a basis for the space of antisymmetric cycles on the double cover, leading to a connection between the Jacobian of the Abel--Prym graph (namely the image of $\wt\Gamma$ by the Abel--Prym map) and the Prym variety of  the double cover, which is  a tropical analogue of \cite[Corollary 12.5.7]{BL04}.

\begin{maincorollary}[\cref{structureOfImage} and \cref{pptavs}]\label{mainCorollary:JacobianOfImage}
Let  $\pi\colon \tGa\to \Ga$ be a free  double cover of hyperelliptic metric graphs. Then the  Abel--Prym graph $\Psi(\wt\Gamma)$ is hyperelliptic  of genus $g_{\Ga}-1$ and its Jacobian
is isomorphic, as a pptav, to
the Prym variety of the cover, namely
    \[
    \Prym(\tGa/\Ga)\cong \mathrm{Jac}(\Psi(\tGa)).
    \]
    \end{maincorollary}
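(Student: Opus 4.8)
The plan is to treat the two assertions—that $\Psi(\tGa)$ is hyperelliptic of genus $g_\Ga-1$, and that $\Jac(\Psi(\tGa))$ is the Prym as a pptav—somewhat separately, and then glue. Since $\pi$ is free we have $g_{\tGa}=2g_\Ga-1$, so $\dim\Prym(\tGa/\Ga)=g_{\tGa}-g_\Ga=g_\Ga-1$; hyperellipticity of $\tGa$ together with \cref{maintheorem:degreeOfAbelPrym} guarantees that $\Psi\colon\tGa\to\Psi(\tGa)$ is a harmonic morphism of degree $2$. The computation $\Psi(\iota(p))=[\iota(p)-p]=-\Psi(p)$ shows that $\Psi$ intertwines the deck involution $\iota$ on the source with the inversion $-1$ on the tropical torus $\Prym$; in particular $\Psi(\tGa)$ is stable under $-1$, whose restriction is an involution $\sigma$ of the Abel--Prym graph satisfying $\Psi\circ\iota=\sigma\circ\Psi$.

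First I would pin down the combinatorics of $\Psi(\tGa)$. Using the explicit description of a free double cover of a hyperelliptic graph, and keeping track of which edges are collapsed by $\Psi$ (cf. \cref{lem:contractedEdges}), I would read off the cell structure of the image directly from that of $\tGa$. To compute the genus I would apply the tropical Riemann--Hurwitz formula to the degree-$2$ harmonic morphism $\Psi$, which has the shape $2g_{\tGa}-2 = 2(2g_{\Psi(\tGa)}-2)+\sum_v R_\Psi(v)$, and evaluate the total ramification contributed by the contracted and dilated locus; substituting $g_{\tGa}=2g_\Ga-1$ should force $g_{\Psi(\tGa)}=g_\Ga-1$. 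For hyperellipticity I would argue that $\sigma$ is itself a hyperelliptic involution: since $\sigma$ descends from $\iota$ and $\tGa$ is already hyperelliptic, the quotient $\Psi(\tGa)/\sigma$ ought to be a tree, whence $\Psi(\tGa)$ is hyperelliptic by the tropical characterisation (a graph of genus $\geq 2$ admitting a degree-$2$ harmonic map to a tree). Equivalently, this is the point at which I would invoke that $\Psi(\tGa)$ is a connected component of the tropical bigonal construction applied to the hyperelliptic datum, which is hyperelliptic by construction.

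For the isomorphism of pptavs I would pass to homology. Write $\Jac(\Psi(\tGa))=H_1(\Psi(\tGa),\RR)/H_1(\Psi(\tGa),\ZZ)$ with its edge-length pairing, and realise $\Prym(\tGa/\Ga)$ as the anti-invariant part $H_1(\tGa,\RR)^-$ modulo the appropriate lattice refining $H_1(\tGa,\ZZ)^-$, equipped with the principal polarization induced by the restricted length pairing. The basis of antisymmetric cycles furnished by the construction gives the natural comparison map: $\Psi$ induces $\Psi_*\colon H_1(\tGa,\ZZ)\to H_1(\Psi(\tGa),\ZZ)$, and I would check that it carries the anti-invariant cycles isomorphically onto $H_1(\Psi(\tGa),\ZZ)$, the dimension count $g_\Ga-1$ from the first step confirming bijectivity on lattices. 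The main obstacle, and the real content of the corollary, is matching the \emph{polarizations} rather than merely the underlying tori: the restriction of the intersection form to $H_1(\tGa)^-$ is a priori \emph{twice} a principal form, and one must verify that this factor of $2$ is precisely absorbed by the local degree $2$ of $\Psi$—lengths pulling back or pushing forward with the degree—so that the Gram matrix of the image graph equals the principal Prym form exactly. Concretely I would compute both Gram matrices in the antisymmetric basis and confirm they agree; establishing this equality, and in particular that the resulting form is unimodular so that the polarization is principal, is the step I expect to demand the most care.
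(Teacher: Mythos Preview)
Your approach to the pptav isomorphism is essentially the paper's: both push forward an explicit basis of antisymmetric cycles via $\Psi_*$ to a basis of $H_1(\Psi(\tGa),\ZZ)$ (built from compatible spanning trees) and then compare the Gram matrices edge by edge, checking that the factor $\tfrac12$ in the Prym polarisation is absorbed exactly by the doubling that $\Psi$ introduces on the dilated edges. The paper carries this out concretely by fixing a spanning tree $\calT$ of $\Ga$, lifting it to $\wt\calT$ and to a spanning tree $\calT^\dagger$ of the image, and matching the resulting cycles; your sketch is vaguer but headed in the same direction.

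There is a genuine gap in your hyperellipticity argument. You correctly observe that $\Psi\circ\iota = -\Psi$, so $\sigma = (-1)|_{\Psi(\tGa)}$ is an involution descending from $\iota$. But the next step, ``since $\sigma$ descends from $\iota$ and $\tGa$ is already hyperelliptic, the quotient $\Psi(\tGa)/\sigma$ ought to be a tree'', does not follow: $\iota$ is \emph{not} the hyperelliptic involution of $\tGa$ (that is $\tj$), and $\tGa/\iota = \Ga$ is certainly not a tree. The missing observation is that $\sigma$ is equally well the image of $\tj$, because on the quotient $\tGa/(\iota\circ\tj)\cong\Psi(\tGa)$ the maps $\iota$ and $\tj$ induce the \emph{same} involution; then
\[
\Psi(\tGa)/\sigma \;\cong\; \tGa/\langle\iota,\tj\rangle \;=\; (\tGa/\iota)/j \;=\; \Ga/j,
\]
which is a tree since $\Ga$ is hyperelliptic. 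The paper makes the identification $\Psi(\tGa)\cong\tGa/(\iota\circ\tj)$ explicit (\cref{lem:quotientbyinvolution}) and deduces hyperellipticity from there.

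For the genus, the paper does not use Riemann--Hurwitz at all: it writes the image down explicitly as a graph $\Ga^\dagger$ obtained from $\Ga$ by a simple surgery near the weakly-fixed tree (\cref{structureOfImage}) and reads off $g_{\Ga^\dagger}=g_\Ga-1$ from the vertex/edge count. Your Riemann--Hurwitz route can be made to work, but since $\Psi$ is typically not finite (it contracts the $\Psi$-collapsible locus) you need the version of the formula with contraction terms and must compute those contributions; this is more delicate than your sketch suggests.

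Finally, do not invoke the bigonal construction to establish hyperellipticity: in the paper that identification (\cref{mainTheorem:bigonal}) is proved \emph{after} and partly \emph{using} the present corollary, so appealing to it here would be circular.
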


\noindent Furthermore, the Abel--Prym graph  is obtained by contraction or deletion of edges of $\wt\Gamma$, see \cref{structureOfImage} for a full description.

The corollary can be interpreted in the context of tropical moduli spaces as follows. Consider the  diagram
\[
        \begin{tikzcd}
\operatorname{Hyp}_g \arrow[d, "\Psi"] \arrow[r, "\Prym"] & \calA_{g-1}\\
\calM_{g-1}\arrow[ur, "\Jac"]                        &                  
\end{tikzcd},
\]
where $\operatorname{Hyp}_g$ is the hyperelliptic locus within the moduli space of tropical admissible double covers, $\calM_{g-1}$ is the moduli space of tropical curves of genus $g-1$, and $\calA_{g-1}$ is the moduli space of tropical abelian varieties of dimension $g-1$. 
The maps $\Prym$ and $\Jac$ associate a Prym variety and a Jacobian to a double cover and to a single tropical curve respectively. Then \cref{mainCorollary:JacobianOfImage} implies that the diagram commutes.   Notably, a construction that  assigns a graph to a double cover already exists in the literature and is known as the bigonal construction. Our final result asserts that these two constructions coincide in the hyperelliptic case.

\begin{maintheorem}[\cref{thm:bigonalconstruction} and \cref{cor:bigonalConstruction}]\label{mainTheorem:bigonal}
      Let $\pi\colon\tGa\to\Ga$ be a free double cover, where $\tGa$ and $\Ga$ are hyperelliptic metric graphs with involutions  $\tj$ and $j$ respectively,  and a finite Abel--Prym map. Then the output of the tropical bigonal construction consists of two connected components, one isomorphic to the Abel--Prym graph and the other a tree given as the quotient of $\wt\Gamma$ by its hyperelliptic involution. 
          Furthermore, the Prym variety of the bigonal construction is isomorphic to the Prym variety of the original double cover $\wt\Gamma/\Gamma$.
\end{maintheorem}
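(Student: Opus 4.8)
The plan is to run the tropical bigonal construction on the tower $\tGa\xrightarrow{\pi}\Ga\xrightarrow{q}T$, where $q\colon\Ga\to T:=\Ga/j$ is the hyperelliptic quotient of the base, so that $T$ is a tree. First I would record that the deck involution $\iota$ of $\pi$ commutes with the hyperelliptic involution $\tj$ of $\tGa$ --- this is where the hyperelliptic hypothesis enters, and it should follow from the structural description of hyperelliptic double covers in the previous sections. Then $\iota$ and $\tj$ generate a group $G\cong(\ZZ/2)^2$ acting on $\tGa$, with quotients $\tGa/\iota=\Ga$, the tree $\tGa/\tj$, the graph $\tGa/(\iota\tj)$, and $\tGa/G=T$ (the involution induced by $\tj$ on $\Ga$ is hyperelliptic with tree quotient $T$, hence equals $j$ by uniqueness). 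I expect the whole statement to reduce to locating these quotients inside the bigonal output.

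Next I would compute the generic fibre. Over a general $t\in T$ the four points of $\tGa$ form a single $G$-orbit, which I label $p_{00}$, $p_{10}=\iota p_{00}$, $p_{01}=\tj p_{00}$, $p_{11}=\iota\tj p_{00}$; the two points of $\Ga$ above $t$ are the $\iota$-orbits $\{p_{00},p_{10}\}$ and $\{p_{01},p_{11}\}$. By the definition of the tropical bigonal construction, a point of the output over $t$ is a choice of one preimage over each of these two points, and the bigonal involution acts by applying $\iota$ in both coordinates. The four transversals then fall into the two $\tj$-orbits $\{p_{00},p_{01}\},\{p_{10},p_{11}\}$ and the two $(\iota\tj)$-orbits $\{p_{00},p_{11}\},\{p_{10},p_{01}\}$, and one checks that the bigonal involution preserves each of these two pairs. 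At the level of fibres this exhibits the output as $\tGa/\tj\ \sqcup\ \tGa/(\iota\tj)$, each mapping with degree $2$ to $T$. Promoting this fibrewise picture to a genuine isomorphism of metric graphs --- controlling dilation and contraction, and in particular analysing the behaviour over the branch points of the ramified map $q$ --- is the step I expect to be the main obstacle, since harmonic morphisms permit edge dilation and the clean four-point picture degenerates exactly where $j$ has fixed points.

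It then remains to identify the two components. The component $\tGa/\tj$ is a tree because $\tj$ is the hyperelliptic involution of $\tGa$. For the other one I would show that the Abel--Prym map is invariant under $\iota\tj$: applying the hyperelliptic relation $x+\tj x\sim D$ on $\tGa$ to $x=\tilde p$ and to $x=\iota\tilde p$, and using $\tj\iota=\iota\tj$, gives $(\tilde p+\tj\tilde p)-(\iota\tilde p+\iota\tj\tilde p)\sim 0$, whence $\Psi(\iota\tj\tilde p)=\Psi(\tilde p)$. Thus $\Psi$ factors through $\tGa/(\iota\tj)$, and since $\Psi$ is harmonic of degree $2$ by \cref{maintheorem:degreeOfAbelPrym} while $\tGa\to\tGa/(\iota\tj)$ already has degree $2$, the induced map $\tGa/(\iota\tj)\to\Psi(\tGa)$ has degree $1$; the finiteness hypothesis forbids contractions, so it is an isomorphism of metric graphs. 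Hence the nontrivial component is precisely the Abel--Prym graph, and comparison with \cref{structureOfImage} fixes its genus as $g_\Ga-1$.

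Finally, for the Prym variety I would use that the bigonal double cover is $(\tGa/\tj\to T)\sqcup(\Psi(\tGa)\to T)$. Because $T$ is a tree, its Jacobian vanishes and the associated norm map is zero, so the Prym of the bigonal cover is the anti-invariant part of $\Jac(\tGa/\tj)\times\Jac(\Psi(\tGa))$ under the bigonal involution. The tree factor contributes nothing, and on $\Jac(\Psi(\tGa))$ the involution has invariant quotient $\Jac(T)=0$, so every class is anti-invariant; hence the Prym of the bigonal construction is $\Jac(\Psi(\tGa))$. Combined with \cref{mainCorollary:JacobianOfImage}, which identifies $\Jac(\Psi(\tGa))$ with $\Prym(\tGa/\Ga)$ as pptav's, this yields the desired isomorphism. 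The one point needing genuine care here is matching the principal polarizations, i.e.\ verifying that the Prym of a ramified double cover of a tree inherits exactly the principal polarization of the Jacobian of its nontrivial component.
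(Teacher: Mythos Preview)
Your approach is essentially the same as the paper's: both identify the bigonal output fibrewise as $\tGa/\tj \sqcup \tGa/(\iota\tj)$ via the $\iota,\tj$-orbit decomposition, then invoke the identification $\Psi(\tGa)\cong\tGa/(\iota\tj)$ (which the paper proves separately as \cref{lem:quotientbyinvolution}, checking dilation factors explicitly rather than using your degree-$1$-plus-finiteness argument), and finally deduce the Prym statement from \cref{mainCorollary:JacobianOfImage} together with the observation that the Prym of a cover of a tree is the Jacobian. Your group-theoretic framing via $G\cong(\ZZ/2)^2$ is a bit cleaner conceptually, and you correctly flag the metric verification over the $j$-fixed locus as the genuine work --- the paper handles this by matching the bigonal dilation convention (edges over Type~II fibres get half-length) against the properly-disconnecting-pair dilation in \cref{thm:behaviousOnEdges}, and cites \cite[Proposition~2.5]{RZ_ngonal} for the connected-component count.
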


See also \cref{rem:bigonalForNonFiniteAP} and \cref{ex:bigonalForNonHyperelliptic} for the case where the Abel--Prym map is not finite or that the source graph is not hyperelliptic. 
\noindent In particular, \cref{mainTheorem:bigonal} generalises 
\cite[Theorem 5.6]{RZ_ngonal} to the case of  free double covers.

\medskip 

When studying the Abel--Prym map, one can also enquire for its class in the tropical homology of the Prym variety. As shown in \cite[Theorem 4.25]{RZ_ngonal}, this class has an explicit description that doesn't depend on the double cover. We find this phenomenon interesting, considering that properties of the double cover do play a significant role in the results of the current paper. However, we do not investigate the connection further and leave it for future work.

\subsection*{Open problems and future directions}

The results presented in this paper 
only apply to free double covers. However, we believe that they extend to any harmonic double cover (possibly with dilated edges and weighted vertices) with mild modifications. It would 
be  interesting to explore the Abel--Prym map in that case, as well as its connection with the bigonal construction. 
This would be a crucial step towards using the tropical Abel--Prym map to study the algebraic universal 1-fold Abel--Prym map.

Another natural direction is an extension to the $d$-fold Abel--Prym map for  $1 < d < g-1$. We believe that, in that case, the image of the Abel--Prym map is a balanced polyhedral complex within the Prym variety. When the graph $\wt\Gamma$ is hyperelliptic, an extension of our techniques should show
 that the Abel--Prym map is a harmonic map of degree $2^d$ onto its image. However, the behaviour of the $d$-fold Abel--Prym map beyond the hyperelliptic locus is expected to be much more complicated and to subtly depend  on the Brill--Noether locus of the graph.

\subsection*{Acknowledgements}
This project was initiated during G.C.'s visit at the University of St Andrews which was supported by the University of Rome Tor Vergata. G.C. would like to thank her supervisor Margarida Melo for the fruitful discussions during the period abroad. Y.L. was supported by the EPSRC New Investigator Award (grant number EP/X002004/1). We warmly thank Felix Röhrle for insightful comments on a previous version of this manuscript. We also thank Thibault Poiret and Martin Ulirsch for helpful discussions. 
A special thank goes to the anonymous referee for many  insightful comments and suggestions.

\section{Preliminaries}\label{sec:preliminaries}

We now recall some basic notions from the theory of metric graphs and their divisor theory and refer to \cite{BN09}, \cite{BF11}, \cite{BC}, and \cite{Len_Survey}  for a more complete introduction to the topic. 	
Let $G$ be a connected graph with $V(G)$ and $E(G)$  respectively the sets of vertices and edges. The \emph{genus} of $G$ is its  first Betti number $g=|E(G)|-|V(G)|+1$. 
A \emph{metric graph} of genus $g$ is a metric space $\Ga$ such that there exists a graph $G$ of genus $g$ and a length function \[l\colon E(G)\to \R_{>0}\]
so that $\Ga$ is obtained from $(G,l)$ by gluing intervals $[0,l(e)]$ for  $e\in E(G)$ at their endpoints, as prescribed by the combinatorial data of $G$. In that case, we say that $(G,l)$ is a \emph{model} for $\Ga$.  The model is called \emph{loopless} if $G$ has no loops. 
  For any point $x$, its \emph{valency}, denoted  $\val(x)$, is  the number of connected components in $U_x\setminus\{x\}$, where $U_x$ is an arbitrarily small open set containing $x$.   
  The \emph{canonical loopless model} $(G_0,l)$ for $\Ga$ is the model obtained by taking
\[
V=\{x\in\Ga|\mathrm{val}(x)\ne 2 \}
\]
and placing an additional vertex at the midpoint of each loop edge. We will often omit the finite graph and work directly with the metric graph. Unless otherwise mentioned, we will implicitly assume that we are working with the canonical  loopless model of the graph.

A metric graph is called \emph{2-connected} if it remains connected after removing a single point. It is called \emph{2-edge-connected} if it remains connected after the removal of an edge. In other words, a graph is 2-connected if it has no cut-vertices and it is  2-edge-connected if it has no bridges. Note that a 2-connected graph is 2-edge-connected but not vice versa. Furthermore, every graph becomes 2-edge-connected after contracting its bridges. 
 
 A \emph{divisor} $D$ is given by \[D=\sum_{p\in \Gamma}a_pp\] with $a_p\in\ZZ$ and $a_p=0$ for all but finitely many $p\in \Ga$ and 
it is called \emph{effective} if $a_p\geq 0$ for every $p\in\Ga$.
The \emph{degree} of $D $ is $\deg(D)=\sum_{p\in\Gamma}a_p$. 
We will denote by $ \mathrm{Div}^k_+(\Gamma)$ the set of effective divisors of degree $k$.
 
	A \emph{rational function} on $\Gamma$ is a continuous, piecewise linear function $f\colon \Gamma\to \R$  whose slopes are all integers. 
 
	The \emph{principal divisor} associated to the rational function $f\in \mathrm{PL}(\Gamma)$ is defined to be 
	\[
	\ddiv(f)=\sum_{P\in \Gamma}\sigma_p(f)p,
	\]
	 where $\sigma_p(f)$ is the sum of the slopes of $\Gamma$ in all directions emanating from $p$.

The \emph{complete linear system} associated to a divisor $D$ is
\[
|D|:=\{E\in\mathrm{Div}_+(\Ga) \,:E-D=\mathrm{div}(f), f\in PL(\Ga)\}.
\]

	  The \emph{rank} of a divisor $D$ on a metric graph $\Gamma$ 
	  is $r_{\Gamma}(D)=-1$ 
	   if $|D|=\emptyset$
	   , 
	   otherwise
	  \begin{equation*}
	  \begin{aligned}
	  &r_{\Gamma}(D)=\max\{k\in \ZZ: |D-E|\ne\emptyset,\,\forall\,E\in\mathrm{Div}^k_+(\Gamma) \}. 
	  \end{aligned}
	  \end{equation*}

\subsection{Morphisms of metric graphs}
We recall the definitions of hyperelliptic metric graphs and morphisms and refer to  \cite{CH, LenUlirschZakharov_abelianCovers} for details. 
Suppose that $(G,l)$ and $(G',l')$ are loopless models for metric graphs $\Ga$ and $\Ga'$. A \emph{morphism of loopless models} $\phi\colon(G,l)\to (G',l')$ is a map of sets 
\[
V(G)\cup E(G)\to V(G')\cup E(G')
\]
such that
\begin{itemize}
    \item[i)] $\phi(V(G))\subseteq V(G')$;
    \item[ii)] if $e=xy$ is an edge of $G$ and $\phi(e)\in V(G')$ then $\phi(x)=\phi(e)=\phi(y)$;
    \item[iii)] if $e=xy$ is an edge of $G$ and $\phi(e)\in E(G')$  then $\phi(e)$ is an edge between $\phi(x)$ and $\phi(y)$;
    \item[iv)] if $\phi(e)=e'$ then $l'(e')/l(e)$ is an integer.
\end{itemize}
Note that a morphism is allowed to contract edges, namely to send an edge of $G$ to a vertex of $G'$.

The morphism $\phi$ induces naturally a map $\tilde{\phi}\colon \Gamma\to \Gamma'$ of topological spaces. 
If $e\in E(G)$ is sent to $e'\in E(G')$, we declare $\tilde{\phi}$ to be linear along the edge $e$ and call $\mu_{\phi}(e)=l'(e')/l(e)$ the \emph{slope} of the map $\tilde\phi$.
The slope is an integer which is 0 on contracted edges.  We say that $\phi$ is \emph{finite} if $\mu_{\phi}(e)>0$ for all $e\in E(G)$. 
An edge $e\in E(G)$ mapping to an edge $e'$ is said to be \emph{dilated} if $\mu_{\phi}(e) >  1$.

\medskip

The following notion will be useful when describing properties of the Abel--Prym map later on.

\begin{definition}\label{def:injectivity}
 Let $f:\Gamma\to\Gamma'$ be a morphism of metric graphs.  We say that $f$ is \emph{globally injective} at a point $p$ if it is the only point of $\Gamma$ mapping to $f(p)$. It is globally injective at a subgraph $\Gamma_0$  if it is globally injective at every point of the subgraph (namely, if $p$ is in $\Gamma_0$ and $q$ is any point in $\Gamma\setminus\{p\}$, then $f(q)\neq f(p)$). 
\end{definition}

A morphism $\phi$ of loopless models is said to be \emph{harmonic} if for every $x\in V(G)$, the nonnegative integer 
\[
m_{\phi}(x)=\underset{x\in e, \,\phi(e)=e'}{\sum_{e\in E(G)}}\mu_{\phi}(e),
\]
is the same over all choices of $e'\in E(G')$ that are incident to the vertex $\phi(x)$.
The \emph{degree} of $\phi$ is 
\[
\deg\phi=\underset{\phi(e)=e'}{\sum_{e\in E(G)}}\mu_{\phi}(e)
\]
for any choice of $e'\in E(G')$.

A \emph{morphism} of metric graphs is a continuous   map $\widetilde\phi\colon \Ga\to \Ga'$ that is induced from a morphism $\phi\colon (G,l)\to (G',l')$ of loopless models. In that case, we define  $\deg\widetilde\phi:= \deg\phi$. The morphism $\wt\phi$ is \emph{harmonic} when $\phi$ is. An \emph{isomorphism} is a finite harmonic morphism of degree $1$. Note that every isomorphism is a bijection but not vice versa. For instance the map from a loop of length 1 to a loop of length 2 which dilates the edge by a factor of 2 is a bijective harmonic morphism of degree 2.  

A \emph{harmonic double cover} is a harmonic morphism of degree 2.
A double cover is called \emph{free} if the fibre over every point consists of exactly two distinct points. In particular, it doesn't have any dilated edges. A free double cover  induces a fixed-point free involution $\iota$ swapping the two points of the fibre over each point.

\medskip

A metric graph is \emph{hyperelliptic} if it has no leaf edges (namely vertices of valency 1) and has a divisor of degree $2$ and rank $1$. Hyperellipticity can be characterised by two other properties.

\begin{theorem*}\cite[Theorem 3.13]{CH}
Let $\Ga$ be a metric graph with canonical loopless model  $(G,l)$ that doesn't have leaf edges. Then the following are equivalent:
\begin{enumerate}
 \item $\Ga$ is hyperelliptic.
 \item There exists an involution $j\colon G\to G$ such that $G/j$ is a tree.
 \item There exists a finite harmonic morphism of degree $2$ from $\Gamma$ to a tree, or $|V(G)|=2$.
\end{enumerate}
\end{theorem*}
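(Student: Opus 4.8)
The plan is to prove the equivalence through the cycle $(3)\Rightarrow(1)\Rightarrow(2)\Rightarrow(3)$, treating the degenerate "banana'' graphs on two vertices separately: these are exactly the configurations for which the quotient map to a tree contracts an edge and the clean finite-morphism description breaks down, which is the role of the alternative clause $|V(G)|=2$.

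For $(3)\Rightarrow(1)$, I would fix a finite harmonic morphism $\phi\colon\Gamma\to T$ of degree $2$ onto a tree and set $D=\phi^{*}(p)$ for an arbitrary $p\in T$, where $\phi^{*}$ is the pullback weighted by the slopes $\mu_{\phi}$; then $\deg D=\deg\phi=2$. To see $r_{\Gamma}(D)\geq 1$, I use that pullback along a harmonic morphism carries principal divisors to principal divisors, together with the fact that any two points on the tree $T$ are linearly equivalent: for every $q\in\Gamma$ one gets $D=\phi^{*}(p)\sim\phi^{*}(\phi(q))\geq q$, so $|D-q|\neq\emptyset$. Since $\Gamma$ has no leaf edges it contains a cycle and hence has positive genus, so a degree-$2$ divisor has rank at most $1$ by the tropical Clifford/Riemann--Roch bounds; therefore $r_{\Gamma}(D)=1$ and $\Gamma$ is hyperelliptic. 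The case $|V(G)|=2$ is checked by exhibiting a degree-$2$ rank-$1$ divisor on the banana graph by hand.

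For $(2)\Rightarrow(3)$, I would pass to the quotient morphism $\pi\colon\Gamma\to\Gamma/j$, which is harmonic of degree $2$ by construction and whose target has underlying model $G/j$, a tree by hypothesis. One then checks that $\pi$ is finite away from the exceptional behaviour of $j$; whenever $j$ forces an edge contraction the underlying model degenerates to two vertices, which is precisely the alternative $|V(G)|=2$.

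The heart of the argument, and the step I expect to be the main obstacle, is $(1)\Rightarrow(2)$: reconstructing the involution from the linear system. Starting from $D$ of degree $2$ and rank $1$, I would first show that for $g\geq 2$ the class of $D$ is unique, so that a canonical hyperelliptic class is attached to $\Gamma$. Rank $1$ then guarantees, for each point $p$, an effective divisor of $|D|$ through $p$, and the crucial point is that this representative has the form $p+\iota(p)$ for a \emph{single} well-defined point $\iota(p)$, yielding an involution $\iota$ of $\Gamma$ and hence an involution $j$ of the canonical loopless model. The difficulty is controlling $|D-p|$ finely enough to exclude a positive-dimensional family of representatives and to guarantee that $\iota$ is continuous and isometric; this is where a reduced-divisor analysis with respect to varying base points is needed, and where the no-leaf-edge hypothesis and the passage to the canonical loopless model are genuinely used to rule out degenerate local configurations. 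Finally I would confirm that $G/j$ is a tree via a genus count, for instance through the tropical Riemann--Hurwitz formula applied to the degree-$2$ quotient, which forces the target to have first Betti number $0$.
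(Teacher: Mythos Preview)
The paper does not prove this statement: it is quoted verbatim as \cite[Theorem 3.13]{CH} and used as a black box, with no argument given. There is therefore no ``paper's own proof'' to compare your proposal against.

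That said, your outline is the standard one and matches the argument in the original reference. Two small points are worth sharpening. In $(1)\Rightarrow(2)$, the uniqueness of $\iota(p)$ is not quite as you frame it: from $p+q\sim p+q'$ one gets $q\sim q'$, and on a metric graph distinct points can be linearly equivalent precisely when they are separated only by bridges, so the no-leaf hypothesis alone does not settle it. The clean way (and the way it is done in \cite{CH}) is to reduce first to the 2-edge-connected case, where $q\sim q'$ forces $q=q'$, and then observe that bridges are forced to be fixed pointwise by any hyperelliptic involution. In $(2)\Rightarrow(3)$, the failure of finiteness occurs exactly when $j$ \emph{flips} an edge (fixes it setwise while swapping its endpoints), since such an edge is contracted in the quotient; for the canonical loopless model without leaves this happens only for the banana graph on two vertices, which is the reason for the clause $|V(G)|=2$.
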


Here, the quotient is the graph obtained by identifying edges swapped by the involution and contracting edges swapped by the involution (note that this notion depends on the choice of model). From the no-leaves loopless assumption, the $|V(G)|=2$ case precisely captures the unique graph consisting of two vertices and edges between them, whose quotient is the trivial graph on a single vertex. 

\begin{remark}
    While our starting point in this paper are  double covers that are either free or hyperelliptic, the resulting Abel--Prym map will often have dilations, see \cref{lem:edgeMappedWithMultiplicity2}.
\end{remark}

In \cref{sec:hyperellipticDoubleCovers} we will count the number of hyperelliptic double covers of a given hyperelliptic graph. To that end, we require  an appropriate notion of an equivalence of such covers:
an \emph{isomorphism}  of double covers is an isomorphism $\phi\colon\Gamma_1\to \Gamma_2$ such that the diagram
\[
\begin{tikzcd}
\Gamma_1 \arrow[r, "\phi"] \arrow[d, "\pi_1"] & \Gamma_2 \arrow[d, "\pi_2"] \\
\Gamma \arrow[r, "Id"]                        & \Gamma                     
\end{tikzcd}
\]
commutes.

\medskip
The following result, concerning  the relationship between  hyperellipticity and free double covers,  will be used throughout the paper.

\begin{proposition}\label{commutative}
    Let $\pi\colon \tGa\to \Gamma$ be a free double cover of  metric graphs of genus $g_\Gamma\geq 2$. If $\wt\Gamma$ is hyperelliptic 
     then 
$\Gamma$ is hyperelliptic as well and the diagram
         \[
        \begin{tikzcd}
\tGa \arrow[d, "\pi"] \arrow[r, "\tj"] & \tGa\arrow[d, "\pi"] \\
\Ga\arrow[r, "j"]                        & \Ga                        
\end{tikzcd}
        \]

         commutes, where $j$ and $\tj$ are the hyperelliptic involutions of $\Gamma$  and $\wt\Gamma$ respectively.

    \begin{proof} 
     Since equivalent divisors map under harmonic morphisms to equivalent divisors \cite[Proposition 1.4.2]{LenUlirsch_Prym}, it follows that given a divisor  $D$ of degree $2$ and rank $1$ on  $\tGa$, the rank of $\pi_*(D)$ is at least 1. On the other hand, Clifford's theorem \cite[Theorem 1]{Facchini_Clifford} implies that the rank is exactly 1. It follows that $\Gamma$ is hyperelliptic with involution $j$, where $j(y)$ is the unique point such that $y+j(y)$ is the divisor of degree 2 and rank 1. Since, for every $x\in\wt\Gamma$, its involution $\tj(x)$ is the unique point such that $x+\tj(x)$ is of degree 2 and rank 1, and since equivalent divisors on $\wt\Gamma$ map to equivalent divisors on $\Gamma$, we conclude that $\tj$ commutes with $\pi$.

    We now turn to show that $\iota$ and $\tj$ commute, recall that $\iota$ is the fixed-point-free involution associated to $\pi$ while $\tj$ has fixed points. 
     If $p$ is a point where $\iota$ and $\tj$ coincide then, since both maps are involutions, we see that $\iota\tj(p) = p = \tj\iota(p)$. We may therefore assume that $\tj(p)\neq\iota(p)$.

       Suppose first that $\pi(p)$  is contained in a bridge of $\Gamma$. Then its preimages in $\wt\Gamma$ are either bridges or a pair of edges whose removal disconnects the graph. 
              We claim that, in the latter case, we have $\tj p = \iota p$, which is a contradiction: by the uniqueness of the hyperelliptic involution, it suffices to show that $p+\iota p$ has rank 1. As bridges are fixed by the hyperelliptic involution, it follows that the divisor $\pi(p + \iota p) = 2\pi(p)$ has rank 1.  
Let $x$ be any point of $\wt\Gamma$ and denote $y=\pi(x)$.
 
Let $f$ be the piecewise linear function on $\Gamma$ such that \[2\pi(p)+\ddiv f=y+y'\]  for some $y'$. 
The pullback  $\pi^*(f)$ of $f$ is a piecewise linear function on $\tGa$ such that $\ddiv(\pi^*(f))$ is supported on $x$ and $\iota x$. Since $x$ was arbitrary, we conclude that $p+\iota p$ has rank 1, so $\iota p = \tj p$, a contradiction. In particular, we can assume that $\tj p$ belongs to a bridge of $\wt\Gamma$. But then $\tj$ fixes both $p$ and $\iota p$ and, in particular, commutes with $\iota $.

        If, instead, $\pi(\widetilde{\jmath}p)$ is contained in a cycle, then by construction of the hyperelliptic involution, we have
        \begin{equation}\label{c}
             p+\widetilde{\jmath}p\simeq \iota p+\tj\iota p.
        \end{equation}  
        Using the fact that $\pi_*$ preserves linear equivalence and is invariant under $\iota$, we get
        $\pi(\widetilde{\jmath}p)\simeq \pi(\widetilde{\jmath}\iota p).
        $
        Since $\tj p$ and $\tj\iota(p)$
        are contained in a cycle, the linear equivalence is, in fact, an equality. Moreover, by definition of $\iota$ we also have 
        $\pi(\widetilde{\jmath}p)=\pi(\iota \tj p)$. Therefore,
        \[
\pi(\iota\widetilde{\jmath}p)=\pi(\widetilde{\jmath}p)=\pi(\widetilde{\jmath}\iota p).
        \]
But, as $\pi$ is a free \emph{double} cover,  either $\tj\iota p=\iota \tj p$ or $\tj\iota p=\tj p$. In the former case, we are done. In the latter case, since $\tj$ is an involution, it follows that $\iota p = p$, which is a contradiction since $\iota$ doesn't have fixed points.

    \end{proof}
\end{proposition}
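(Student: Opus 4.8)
The plan is to derive the hyperellipticity of $\Gamma$ from a pushforward argument, and then to verify the diagram $\pi\circ\tj=j\circ\pi$ pointwise, using uniqueness of the hyperelliptic involution away from bridges and handling the points over bridges as a separate, more delicate case.

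First I would show that $\Gamma$ is hyperelliptic. Starting from a divisor $\tD$ of degree $2$ and rank $1$ on $\tGa$, I would consider the pushforward $\pi_*\tD$, which has degree $2$. It has rank at least $1$: for any $p\in\Gamma$ pick a preimage $q\in\pi^{-1}(p)$, use that $\tD$ has rank $1$ to write $\tD-q\sim\tE\geq 0$, and push forward; since $\pi_*$ preserves effectivity and linear equivalence (\cite[Proposition 1.4.2]{LenUlirsch_Prym}) and $\pi_*q=p$, this yields $\pi_*\tD-p\sim\pi_*\tE\geq 0$. Clifford's inequality (\cite[Theorem 1]{Facchini_Clifford}) then forces the rank to be exactly $1$. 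Since a free double cover preserves valencies, $\Gamma$ inherits the no-leaf-edge property from $\tGa$, so $\Gamma$ is hyperelliptic; because $g_\Gamma\geq 2$ the class of degree-$2$ rank-$1$ divisors is unique, and the involution $j$ is well-defined.

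For the commutativity, the same pushforward applied to $x+\tj x$ shows that $\pi(x)+\pi(\tj x)$ lies in this unique class, as does $\pi(x)+j(\pi x)$ by definition of $j$; subtracting gives $\pi(\tj x)\sim j(\pi x)$. When $\pi(x)$ does not lie on a bridge, both of these points are non-bridge, and the injectivity of the Abel--Jacobi map away from bridges upgrades the equivalence to an equality $\pi(\tj x)=j(\pi x)$. The hard part will be the points over a bridge $e\subset\Gamma$, where this uniqueness fails because all points of a bridge are mutually linearly equivalent; there $j$ fixes $\pi(x)$, so it suffices to prove $\pi(\tj x)=\pi(x)$, i.e.\ $\tj x\in\{x,\iota x\}$. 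Since $\pi$ is free of degree $2$, the preimage $\pi^{-1}(e)$ is either a pair of bridges of $\tGa$ or a pair of edges forming a $2$-edge-cut. In the first case $x$ and $\iota x$ lie on bridges of $\tGa$, which the hyperelliptic involution fixes pointwise, so $\tj x=x$. The remaining, genuinely delicate case is the $2$-edge-cut, where $x$ and $\iota x$ are non-bridge points: here I would aim to show that $x+\iota x$ has rank $1$, so that it lands in the degree-$2$ rank-$1$ class and hence $\tj x\sim\iota x$, forcing $\tj x=\iota x$ since both are non-bridge. The natural tool is the pullback through $\pi^*$ of a rational function on $\Gamma$ witnessing that $2\pi(x)$ has rank $1$ (which it does, as $\pi(x)$ lies on a bridge and is fixed by $j$); producing from it a function on $\tGa$ exhibiting $(x+\iota x)-z$ as effective for arbitrary $z$ is the crux. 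I expect this rank computation in the $2$-edge-cut case, together with the analysis of the two possible preimage types of a bridge, to be the only real obstacle, the rest following cleanly from pushforward and Clifford's theorem.
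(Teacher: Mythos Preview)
Your proposal is correct and follows essentially the same strategy as the paper for both halves: pushforward plus Clifford for the hyperellipticity of $\Gamma$, then uniqueness of the $g^1_2$ for the commutativity. In fact, for the diagram $\pi\circ\tj=j\circ\pi$ you are \emph{more} careful than the paper: the paper dispatches it in one sentence via ``$j(y)$ is the unique point with $y+j(y)$ in the $g^1_2$'', which, as you correctly notice, fails on bridges; your bridge/non-bridge case split is the honest version of that argument. Amusingly, the technique you propose for the bridge case (show $x+\iota x$ has rank~$1$ in the $2$-edge-cut situation) is exactly what the paper uses, not for the stated diagram, but for the additional claim $\iota\tj=\tj\iota$ that it proves inside the same proof (and relies on later, e.g.\ in Lemma~\ref{shapeOfFibres}). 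You do not address $\iota\tj=\tj\iota$, but it is not part of the proposition as stated.

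Two small caveats. First, in the $2$-edge-cut case you conclude $\tj x=\iota x$ ``since both are non-bridge''; you only know a priori that $\iota x$ is non-bridge, but that already suffices, since a non-bridge point is linearly equivalent only to itself. Second, the naive pullback does not quite give what you want: from $2\pi(x)+\ddiv f=y+y'$ one gets $\ddiv(\pi^*f)=\pi^*y+\pi^*y'-2(x+\iota x)$, which exhibits $2(x+\iota x)$, not $x+\iota x$, as dominating an arbitrary point. The paper's write-up at this step is equally imprecise. The clean fix is to bypass pullback and argue directly that $\{x,\iota x\}$ is a separating pair of valency-$2$ points, from which $r(x+\iota x)\geq 1$ follows by a standard chip-firing/Dhar argument.
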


\begin{example}
\cref{abel1} shows a hyperelliptic double cover of a theta graph with the hyperelliptic involutions $\tj, j$, as well as the double cover $\pi$. As we see, the two maps commute with each other. 

\begin{figure}[H]
    \centering
\begin{tikzpicture}[scale=0.8]

\coordinate (2) at (2,1.5); 
 \coordinate (a2) at (2,5); 
 \coordinate (b2) at (2,8); 
 
\coordinate (12) at (-1,1.5); 
 \coordinate (a12) at (-1,5); 
\coordinate (b12) at (-1,8); 
 \coordinate (b5) at (-0.4,8.7); 
\coordinate (a5) at (-0.4,4.3);
\coordinate (5) at (-0.4,2.2); 
 \coordinate (c1) at (0.4,6.4); 
 \coordinate (c2) at (0.6,6.6);

 \coordinate (2') at (8,1.5); 
 \coordinate (a2') at (8,5); 
 \coordinate (b2') at (8,8); 
 
\coordinate (12') at (5,1.5); 
 \coordinate (a12') at (5,5); 
\coordinate (b12') at (5,8); 
 \coordinate (b5') at (7.4,8.7); 

\coordinate (5') at (7.4,2.2); 
 \coordinate (c1') at (0.4+6,6.4); 
 \coordinate (c2') at (0.6+6,6.6);
 
 \foreach \i in {2,12,a2,a12,b2,b12,2',12',a2',a12',b2',b12'}
 \draw[fill=black](\i) circle (0.15em);

\draw[black,fill=black](b5) circle (0.20em);
\draw[black,fill=black](a5) circle (0.20em);
\draw[black,fill=black](5) circle (0.20em);
 \draw[thin, black] (a12)--(a2);
 \draw[thin, black] (b12)--(b2);
 \draw[thin, black] (12) to [out=90, in=90] (2);
\draw[thin, black] (12) to [out=90+180, in=-90] (2);
\draw[thin, black] (12)--(2);
\draw[thin, black] (b12) to [out=90, in=90] (b2);
\draw[thin, black] (a12) to [out=90+180, in=-90] (a2);

  \draw[thin, black] (b12)--(a2);
\draw[thin, black] (a12)--(c1);
    \draw[thin, black] (b2)--(c2);
  \draw (0.5,3.5) node [below] {$\downarrow$};
   \draw (0.8,3.4) node [below] {$\pi$};

   \draw (b5) node [above] {$p$}; 
  \draw (-0.4,2.2) node [above] {$\pi(p)$}; 
    \draw (3,6.5) node [right] {$\rightarrow$};
     \draw (3.3,6.5) node [above] {$\tj$};
      \draw (3,1.5) node [right] {$\rightarrow$};
      \draw (3.3,1.5) node [above] {$j$};
   \draw (a5) node [below] {$\iota(p)$};

\draw[black,fill=black](b5') circle (0.20em);
\draw[black,fill=black](5') circle (0.20em);
 \draw[thin, black] (a12')--(a2');
 \draw[thin, black] (b12')--(b2');
 \draw[thin, black] (12') to [out=90, in=90] (2');
\draw[thin, black] (12') to [out=90+180, in=-90] (2');
\draw[thin, black] (12')--(2');
\draw[thin, black] (b12') to [out=90, in=90] (b2');
\draw[thin, black] (a12') to [out=90+180, in=-90] (a2');

  \draw[thin, black] (b12')--(a2');
\draw[thin, black] (a12')--(c1');
    \draw[thin, black] (b2')--(c2');
  \draw (0.5+6,3.5) node [below] {$\downarrow$};
   \draw (6.8,3.4) node [below] {$\pi$};

   \draw (b5') node [above] {$\tj(p)$}; 
  \draw (8.8,2.25) node [above] {$\pi(\tj(p))=j(\pi(p))$};

  \end{tikzpicture}
 \caption{The hyperelliptic involution and the double cover maps for the theta graph.}
    \label{abel1}
\end{figure}

\end{example}

\subsection{The Prym variety and the Abel--Prym map}

Following \cite[Section 1.4]{LenUlirsch_Prym}, the map $\pi$ induces a \emph{norm} map $\pi_*:\Jac(\wt\Gamma)\to\Jac(\Gamma)$, sending a divisor class $[\sum p_i - \sum q_i]$ to $[\sum \pi(p_i) - \sum \pi(q_i)]$. The kernel of the map has two connected components, known as the \emph{even} and \emph{odd}
 Prym varieties, where the former is the component containing the identity.
 Throughout this paper, we will only care about the odd component and simply refer to it as the \emph{Prym variety}.    The 1-fold Abel--Prym map is then given by
 \[
 \begin{tabular}{cccc}	    $\Psi\colon$ & $\wt \Gamma $ & $\longrightarrow$&$\Prym(\wt \Gamma/\Gamma)$  \\
	         &$p$&$\mapsto$&$[p - \iota(p)]$ \\
	    \end{tabular}.
\]
We will from now on omit the term 1-fold and refer to the map simply as the \emph{Abel--Prym map}.

\begin{definition}
    The image of the  Abel--Prym map is called the \emph{Abel--Prym graph}.

\end{definition}

In \cref{sec:JacobianVSPrym} we  provide a detailed description of the metric on the Prym variety. However, since most of the details are not necessary before that point, we now only describe the metric  on the Abel--Prym graph. Fix a basis $B$ for antisymmetric cycles on $\wt\Gamma$ (namely, such that each oriented cycle  $\gamma$ in $B$ satisfies $\iota\gamma = -\iota\gamma$). Using  \cite[Construction B]{LZ22}, the basis may be chosen so that the multiplicity on each edge of every element  (with respect to a fixed orientation) is either $0,\pm 1,$ or  $\pm 2$.  Let $\tilde e$ be a segment of $\wt\Gamma$ of length $\ell(\tilde e)$, short enough so that its  intersection with every element of $B$ has a fixed multiplicity. Then 
\begin{equation}\label{eq:PrymMetric}
\Psi(\tilde e) = 
\begin{cases}
 \text{a point}, & \text{if $\tilde e$ has trivial intersection with all elements of $B$;} \\

 \text{a segment of length $2\ell(\tilde e)$}, & \text{if $\tilde e$ intersects all the elements of $B$ with even multiplicity;}\\

 \text{a segment isometric to $\tilde e$}, & \text{otherwise.}
 
\end{cases}
\end{equation}

\section{Basic properties  of the 1-fold Abel--Prym map}
\label{sec:behaviourofAP}

Fix a free double cover $\pi:\wt\Gamma\to\Gamma$ throughout. 
In this section, we determine the  multiplicity of the Abel--Prym map on various types of edges of the double cover and use it to prove \cref{maintheorem:degreeOfAbelPrym}. Most of the structural results concerning the Abel--Prym map are summarised in 
\cref{thm:behaviousOnEdges}, whose proof will be given in a series of lemmas throughout the section.

\begin{theorem}\label{thm:behaviousOnEdges}
  Let $\pi\colon\wt\Gamma\to\Gamma$ be a free double cover. Let $e$ be an edge of $\Gamma$ with preimages $e',e''$ in $\wt\Gamma$.

  \begin{enumerate}
      \item If $e'$ and $e''$ are bridges then they are contracted by $\Psi$ (note that, if one of them is a bridge then the other one is as well).

            \item If $e$ is a bridge and $e',e''$ are a properly disconnecting pair then              
             $\Psi$ is globally injective 
             on $e',e''$  
            and dilates both.

           \item If $e$ is not a bridge and $e',e''$ form  a properly disconnecting pair  then they are contracted by $\Psi$.

\item If $e$ is not a bridge and the removal of $e'$ and $e''$ does not disconnect $\wt\Gamma$,  then $\Psi$ maps $e'$ and $e''$ isometrically and  $\Psi^{-1}(\Psi(e'))$ consists of either 1 or 2 edges of $\wt\Gamma$. 
      
  \end{enumerate}
\end{theorem}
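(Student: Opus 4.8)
The plan is to read off every case from the metric formula \eqref{eq:PrymMetric}: since the basis $B$ consists of antisymmetric cycles, the behaviour of $\Psi$ on $e'$ is governed entirely by the multiplicities with which antisymmetric cycles pass through $e'$ and, crucially, by their parity. Two preliminary facts set this up. First, $\iota$ is a deck transformation, hence a graph automorphism with $\iota(e')=e''$; automorphisms preserve bridges and disconnecting pairs, which settles the parenthetical claim in $(1)$ and lets me treat $e'$ and $e''$ symmetrically. Second, orient $e'$ and $e''$ so that both cross the cut they may form: every cycle then has vanishing net flow, so $m_\delta(e')+m_\delta(e'')=0$, while for an antisymmetric cycle $\delta$ the relation $\iota\delta=-\delta$ compares $m_\delta(e'')$ with $m_\delta(e')$ through a sign that is $-1$ when $\iota$ preserves the two sides of the cut and $+1$ when it swaps them. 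In the preserving case the two relations are compatible and impose nothing; in the swapping case they force $m_\delta(e')=0$.

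The contraction cases $(1)$ and $(3)$ are then essentially formal. If $e'$ is a bridge it lies on no cycle, so all multiplicities vanish and the first clause of \eqref{eq:PrymMetric} contracts it. If $e$ is not a bridge but $e',e''$ disconnect $\wt\Gamma$, then $\Gamma\setminus e$ is connected while its preimage $\wt\Gamma\setminus\{e',e''\}$ is a disconnected double cover, hence the trivial one; thus $\iota$ \emph{swaps} the two sheets, and by the dichotomy above every antisymmetric cycle has multiplicity $0$ on $e'$, so $e'$ is contracted.

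For the isometry case $(4)$, removing $e'$ alone leaves $\wt\Gamma$ connected, so there is a cycle $\sigma$ through $e'$ that avoids $e''$; then $\sigma-\iota\sigma$ is an integral antisymmetric cycle of multiplicity $1$ on $e'$, whence some element of $B$ has odd multiplicity there and the last clause of \eqref{eq:PrymMetric} maps $e'$ isometrically. The dilation case $(2)$ is the delicate one, and is the \emph{main obstacle}. Here $e$ is a bridge whose two sides $\pi^{-1}(\Gamma_1),\pi^{-1}(\Gamma_2)$ are \emph{connected} covers, $\iota$ preserves each side, and the crossing cycle $\gamma$ gives an antisymmetric cycle $\gamma-\iota\gamma$ of multiplicity $2$ on $e'$. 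What must be shown is that no antisymmetric cycle has \emph{odd} multiplicity on $e'$. Unwinding the $\iota$-equivariant extension $0\to H_1(\pi^{-1}(\Gamma_1))\oplus H_1(\pi^{-1}(\Gamma_2))\to H_1(\wt\Gamma)\xrightarrow{\,m(\cdot,e')\,}\ZZ\to0$, on which $\iota$ acts by $-1$ on the quotient, this reduces to proving that the symmetric cycle $c_i=p_i+\iota p_i$, the lift of a loop $\ell_i$ of nontrivial monodromy in $\Gamma_i$, is \emph{not} of the form $\beta+\iota\beta$. I would prove this by applying $\pi_*$: from $c_i=\beta+\iota\beta$ one gets $2\pi_*\beta=\pi_*c_i=2\ell_i$, so $\pi_*\beta=\ell_i$ and $\langle\alpha_i,\pi_*\beta\rangle=1$ for the class $\alpha_i\in H^1(\Gamma_i;\ZZ/2)$ defining the cover; but $\langle\alpha_i,\pi_*\beta\rangle=\langle\pi^*\alpha_i,\beta\rangle=0$, since a cover splits its own classifying class, a contradiction. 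Hence all antisymmetric multiplicities on $e'$ are even with some equal to $\pm2$, and \eqref{eq:PrymMetric} dilates $e'$.

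It remains to establish the global statements. For the injectivity in $(2)$ I would use that, $e$ being a bridge, the Prym-coordinate dual to $\gamma-\iota\gamma$ is strictly monotone along $e'$ and locally constant away from the disconnecting pair, so no other point of $\wt\Gamma$ shares the image of a point of $e'$. For the fibre count in $(4)$ I would translate $\Psi(p)=\Psi(q)$ into the degree-two equivalence $p+\iota q\sim q+\iota p$ and use the resulting rank constraint, together with the antisymmetric-cycle description of the metric, to bound by two the number of edges mapping onto $\Psi(e')$. These global arguments, together with the parity computation of case $(2)$, are where the real work lies; the remaining assertions follow formally from \eqref{eq:PrymMetric}.
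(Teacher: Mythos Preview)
Your treatment of the \emph{local} claims is correct and, for cases (1), (3), and the isometry in (4), proceeds along the same lines as the paper: both you and the paper ultimately reduce to the description of the Prym metric in \eqref{eq:PrymMetric} (the paper packages cases (1) and (3) by citing \cite[Theorem~4.1]{LZ22}, but the content is the same connected-component count you carry out). Your parity argument for the dilation in (2) is genuinely different and rather nice: instead of invoking the explicit basis of \cite[Construction~B]{LZ22} as the paper does in \cref{lem:edgeMappedWithMultiplicity2}, you show directly that no antisymmetric cycle can have odd multiplicity on $e'$ by pushing a hypothetical odd class down to $\Gamma_i$ and pairing with the monodromy class $\alpha_i$. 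The reduction to ``$c_i=\beta+\iota\beta$ is impossible'' is exactly right: if $\delta$ is antisymmetric with $m(\delta,e')=1$, then $\delta-\gamma\in K_1\oplus K_2$ and antisymmetry forces $\gamma+\iota\gamma=-(\kappa+\iota\kappa)$ componentwise. This is a cleaner conceptual explanation than the basis construction, at the cost of requiring the reader to unwind the equivariant extension.

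The \emph{global} statements, however, are where your proposal has real gaps. For the injectivity in (2), your claim that ``the Prym-coordinate dual to $\gamma-\iota\gamma$ is \ldots\ locally constant away from the disconnecting pair'' is not correct as stated: the derivative of that coordinate along an edge $f$ is (up to a factor) $m(\gamma-\iota\gamma,f)$, and the cycle $\gamma-\iota\gamma$ passes through plenty of edges in $\pi^{-1}(\Gamma_1)$ and $\pi^{-1}(\Gamma_2)$, not just $e'$ and $e''$. There is no cocycle supported on $\{e',e''\}$ alone that gives a well-defined functional on antisymmetric homology (the obvious candidate $e'-e''$ is not closed, and the cut cocycle $e'+e''$ is exact). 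The paper instead proves injectivity by a direct divisor-theoretic argument (\cref{lem:edgeMappedWithMultiplicity2}): if $p+\iota q\simeq q+\iota p$ with $p\in e'$, one analyses the piecewise-linear function realising the equivalence, uses that both components of $\wt\Gamma\setminus\{p,\iota p\}$ are $\iota$-invariant connected double covers, and derives a contradiction from the slope pattern. For the fibre bound in (4), your sketch (``translate into $p+\iota q\sim q+\iota p$ and use a rank constraint'') is the right starting point but is not yet an argument; the paper carries this out via a short combinatorial lemma on disconnecting pairs (\cref{lem:disconnectingPairs}) feeding into \cref{prop:fibreSize}, which shows that three points with the same Abel--Prym image would force the edge through $p$ and its $\iota$-image to be a disconnecting pair, contradicting the hypotheses of (4). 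You should expect to need arguments of roughly that shape; the coordinate heuristic does not suffice.
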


\noindent See Definitions \ref{def:injectivity}  and  \ref{def:propdiscpair} for an explanation of ``global injectivity" and ``properly disconnecting pairs" respectively.

\begin{proof}

Parts (1) and (3) follow from \cref{lem:contractedEdges} below.  Part (3) follows from \cref{lem:edgeMappedWithMultiplicity2}.
Part (4) follows \cref{lem:notBridgeNotDisconnectingPair} together with \cref{prop:fibreSize}.  
    
\end{proof}

A more refined description of the fibres of the Abel--Prym map is given in  \cref{lem:AbelPrymSeparatesTwoConnectedComponents} and \cref{lem:AbelPrymSeparatesTwoConnectedComponents2}
below. 
We begin the proof of the various parts of \cref{thm:behaviousOnEdges} by identifying the cases  in which the Abel--Prym map contracts an edge.

\begin{lemma}\label{lem:contractedEdges}
    Let $e$ be an edge of $\Gamma$ with preimages $e',e''$ in $\wt\Gamma$. The Abel--Prym map contracts $e'$ and $e''$ precisely in the following two cases. 
    \begin{enumerate}
        \item $e,e',e''$ are all bridges.
        
        \item $e$ is not a bridge and $\wt\Gamma\setminus\{e',e''\}$ is disconnected. 
    \end{enumerate}
\end{lemma}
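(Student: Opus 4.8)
The plan is to translate the metrically-defined notion of contraction into a purely homological condition and then resolve it through the interaction between edge cuts and the involution $\iota$. By the metric formula \eqref{eq:PrymMetric}, the map $\Psi$ contracts a short segment exactly when that segment has trivial intersection with every element of the basis $B$ of antisymmetric cycles; since $B$ is a basis, this is the same as saying that the edge occurs with multiplicity zero in \emph{every} antisymmetric cycle (a cycle $\gamma$ with $\iota\gamma=-\gamma$). So I would first restate the claim as: $e'$ (equivalently $e''$) is contracted if and only if no antisymmetric cycle passes through it. The workhorse in both directions is the observation that, for an arbitrary cycle $\gamma_0$ on $\wt\Gamma$, the chain $\gamma_0-\iota\gamma_0$ is an antisymmetric cycle, and that the multiplicity of $e'$ in it equals the multiplicity of $e'$ in $\gamma_0$ minus the multiplicity of $e''$ in $\gamma_0$, because $\iota$ interchanges the oriented edges $e'$ and $e''$.

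Next I would dispose of the two configurations outside cases (1) and (2), proving they are \emph{not} contracted by exhibiting an antisymmetric cycle through $e'$. If $e$ is not a bridge and $\wt\Gamma\setminus\{e',e''\}$ is connected, I pick a path in $\wt\Gamma\setminus\{e',e''\}$ joining the two endpoints of $e'$; together with $e'$ this forms a cycle $\gamma_0$ using $e'$ but not $e''$, so $\gamma_0-\iota\gamma_0$ carries multiplicity $1$ on $e'$. If instead $e$ is a bridge while $e',e''$ are not bridges, then $\wt\Gamma\setminus\{e',e''\}=\pi^{-1}(\Gamma_1)\sqcup\pi^{-1}(\Gamma_2)$ with both parts connected and, crucially, each preserved by $\iota$ (since $\iota$ covers the identity); here any cycle through $e'$ must also traverse $e''$, and a short computation gives that $\gamma_0-\iota\gamma_0$ carries multiplicity $\pm 2$ on $e'$. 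In both configurations $e'$ meets an antisymmetric cycle and so is not contracted.

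I would then establish contraction in cases (1) and (2). Case (1) is immediate: a bridge lies on no cycle at all, hence on no antisymmetric cycle. Case (2) is the heart of the matter. Here $e$ is not a bridge but $\wt\Gamma\setminus\{e',e''\}=A\sqcup B$ is disconnected; being a degree-two cover of the connected graph $\Gamma\setminus e$, each of $A,B$ maps isomorphically to $\Gamma\setminus e$, and therefore $\iota$ must swap $A$ and $B$. For any cycle $\gamma$ I impose two constraints on the multiplicities $c_{e'},c_{e''}$: the flow-balance (cut) condition across the minimal edge cut $\{e',e''\}$, and the antisymmetry condition $\iota\gamma=-\gamma$. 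With the orientations $e'\colon A\to B$ and $e''=\iota(e')\colon B\to A$, the cut condition reads $c_{e'}=c_{e''}$ whereas antisymmetry reads $c_{e''}=-c_{e'}$, forcing $c_{e'}=c_{e''}=0$. Hence every antisymmetric cycle avoids $e'$, and it is contracted.

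The main obstacle, and the conceptual point of the lemma, is precisely the contrast between case (2) and the superficially identical bridge configuration: both produce a disconnecting pair $\{e',e''\}$, yet the contraction behaviour is opposite. The difference is governed entirely by whether $\iota$ swaps or preserves the two sides of the cut. When $\iota$ swaps the sides (case (2)) the flow and antisymmetry relations are incompatible unless $c_{e'}=0$, giving contraction; when $\iota$ preserves the sides (the bridge case) the two relations coincide and leave a one-dimensional family of solutions with $c_{e'}$ free, preventing contraction. I would take particular care to justify that $\iota$ genuinely swaps $A$ and $B$ in case (2) — using that $\pi|_A$ has locally constant, hence constant, degree over the connected base $\Gamma\setminus e$, so that $A$ and $B$ are single sheets interchanged by the deck transformation — since this orientation-level fact is exactly what makes the two linear conditions clash.
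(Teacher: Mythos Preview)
Your argument is correct and complete. It is, however, a genuinely different route from the paper's proof. The paper dispatches the lemma in one line by invoking \cite[Theorem~4.1]{LZ22}, which states that $\Psi$ contracts $e'$ if and only if $\wt\Gamma\setminus\{e',e''\}$ has strictly more connected components than $\Gamma\setminus e$; the two cases then fall out by inspection. Your approach instead unwinds the contraction condition directly from the metric formula~\eqref{eq:PrymMetric}, reducing it to the vanishing of the $e'$-coefficient in every antisymmetric cycle, and then analyses this coefficient via the interaction of the cut relation with the $\iota$-action on the two sides of the disconnecting pair. The payoff of your method is that it is self-contained and makes transparent the mechanism (swapping versus preserving the sides of the cut) that distinguishes case~(2) from the bridge configuration---a point the paper only alludes to later, in Lemmas~\ref{lem:edgeMappedWithMultiplicity2} and~\ref{lem:notBridgeNotDisconnectingPair}, when determining the dilation factor. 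The paper's citation, by contrast, is shorter and situates the lemma within the broader polyhedral framework of \cite{LZ22}.
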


\begin{proof}
According to \cite[Theorem 4.1]{LZ22}, the Abel--Prym map contracts the edge $e'$ if and only if  the number     of connected components of $\wt\Gamma\setminus\{e',e''\}$ is larger than the number of connected components of $\Gamma\setminus{e}$. But this happens precisely in the two cases above.
\end{proof}

See \cref{figpic9}.(B) and \cref{figpic9}.(C) for examples of the two types of contracted edges. 
Note that, if $e'$ is a bridge then so are $e$ and $e''$. Vice versa, when $e$ is not a bridge, it  always holds that neither  of $\{e',e''\}=\pi^{-1}(e)$ is a bridge. This situation will repeat often in the sequel so we give it a name.   

\begin{definition}\label{def:propdiscpair}
    A pair of edges $\{e',e''\}$ is said to form a  \emph{properly disconnecting pair} if removing both of them disconnects the graph but removing only one of them does not. 
\end{definition}

\begin{remark}
 \cite{RZ_matroidal} provides  a matroidal perspective on  tropical Prym varieties and the two cases described in \cref{lem:contractedEdges} coincide with the 1-circuits of the matroid ${M}^*(\wt G/G)$ (see \cite[Figure 3]{RZ_matroidal} for an example). Moreover, such edges are exactly those whose contraction does not change the Prym variety.
\end{remark}

The contracted locus plays a significant role in the results of this paper, so we give it a name as well. 
\begin{definition}
    We refer to the subset of $\wt\Gamma$ contracted by the Abel--Prym map  as the \emph{$\Psi$-collapsible locus}.  The subset arising from the second kind appearing in 
    \cref{lem:contractedEdges} is referred to as the \emph{cyclic} $\Psi$-collapsible locus. 
\end{definition}

The complement of the   cyclic $\Psi$-collapsible locus   within the $\Psi$-collapsible locus consists of the bridges of $\wt\Gamma$, hence the adjective `cyclic'.

\medskip 
We now study non-bridge edges of $\wt\Gamma$ that map to bridges of $\Gamma$.

\begin{lemma}\label{lem:edgeMappedWithMultiplicity2}
    Let $e$ be a bridge  of $\Gamma$ and suppose that its preimages  $e',e''$ form a properly disconnecting pair.  
    Then $\Psi$ dilates $e',e''$ by a factor of 2 and is globally injective on their interior.
    \end{lemma}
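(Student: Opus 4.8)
The plan is to establish the two assertions—dilation by $2$ and global injectivity on the interior—separately, as they rest on different ideas. Throughout I would exploit the following structure, forced by the hypotheses: since $e$ is a bridge, $\Ga\setminus e$ has two components $\Ga_1,\Ga_2$, and since $\{e',e''\}$ is a \emph{properly disconnecting} pair, neither preimage is a bridge, so each $\tGa_i:=\pi^{-1}(\Ga_i)$ is a \emph{connected} (nontrivial) double cover of $\Ga_i$. Consequently $e',e''$ constitute the entire cut between $\tGa_1$ and $\tGa_2$, the involution $\iota$ preserves each $\tGa_i$ and acts freely on it, and $H_1(\tGa,\ZZ)=H_1(\tGa_1,\ZZ)\oplus H_1(\tGa_2,\ZZ)\oplus\ZZ\,C$, where $C$ is a cycle crossing the cut with multiplicity $+1$ on $e'$ and $-1$ on $e''$.

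For the dilation statement I would invoke the metric formula \eqref{eq:PrymMetric}: it suffices to show that every antisymmetric cycle has \emph{even} multiplicity on $e'$ (and on $e''$) and that multiplicity $\pm 2$ actually occurs. The latter is immediate, as $C-\iota C$ is an integral antisymmetric cycle with multiplicity $2$ on $e'$. For the former—the crux of this half—I write an antisymmetric $\gamma=aC+\beta_1+\beta_2$ with $\beta_i\in H_1(\tGa_i,\ZZ)$, so that $a$ is precisely the multiplicity on $e'$. A direct computation gives $C+\iota C=Z_1+Z_2$, where $Z_i=\pi_i^{*}\ell_i$ is the preimage of a loop $\ell_i\subseteq\Ga_i$ whose monodromy for the cover $\tGa_i\to\Ga_i$ is nontrivial (such $\ell_i$ exists exactly because the cover is nontrivial). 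Imposing $\iota\gamma=-\gamma$ then forces $\beta_i+\iota\beta_i=-aZ_i$ in each summand. Applying $\pi_{i*}$, and using $\pi_{i*}\circ\iota=\pi_{i*}$ together with the projection formula $\pi_{i*}Z_i=\pi_{i*}\pi_i^{*}\ell_i=2\ell_i$, yields $\pi_{i*}\beta_i=-a\ell_i$. Since $\Im\pi_{i*}$ consists of classes of trivial monodromy while $\ell_i$ has nontrivial monodromy, evaluating the monodromy character gives $a\equiv 0\pmod 2$. This parity argument is the main obstacle of this half.

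For global injectivity at a point $p$ in the interior of $e'$, suppose $\Psi(p)=\Psi(q)$, that is, $p+\iota q\sim q+\iota p$. If these two effective degree‑$2$ divisors are distinct, then the tropical Abel--Jacobi map on $\Sym^2$ fails to be injective, so $\tGa$ is hyperelliptic by the tropical Clifford theorem \cite{Len_Clifford}; moreover $p+\iota q$ then has rank $1$, hence lies in the hyperelliptic pencil and equals $x+\tj x$ for some $x$. Matching supports gives $\iota q=\tj p$. But now $\tGa$ is hyperelliptic, $e$ is a bridge, and $\{e',e''\}$ is a properly disconnecting pair, so the sub‑argument inside the proof of \cref{commutative} applies to give $\tj p=\iota p$; therefore $\iota q=\iota p$ and $q=p$, contradicting distinctness. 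Hence $p+\iota q=q+\iota p$ as divisors, and since $\iota$ is fixed‑point free this forces $q=p$, proving global injectivity on the interior of $e'$. Applying $\iota$, under which $\Psi$ becomes $-\Psi$, transfers the conclusion to the interior of $e''$.

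The one delicate point in the injectivity argument is the appeal to \cref{commutative}, whose statement carries the standing hyperellipticity hypothesis; I must therefore first enter that regime—which the Clifford dichotomy guarantees—before quoting the sub‑claim $\tj p=\iota p$. I expect the parity computation in the second paragraph to be the genuine technical obstacle, the injectivity half being comparatively formal once the tropical Clifford theorem and \cref{commutative} are in hand.
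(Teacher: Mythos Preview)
Your dilation argument is correct and gives a pleasant intrinsic alternative to the paper's proof. The paper simply invokes \cite[Construction~B]{LZ22} to exhibit a basis of antisymmetric cycles in which every element passing through $e'$ visibly has multiplicity $\pm 2$ there, and then reads off the conclusion from \eqref{eq:PrymMetric}. Your parity computation via the monodromy character of the connected covers $\tGa_i\to\Ga_i$ establishes the same evenness without reference to any particular basis; it is a genuinely different (and arguably cleaner) route.

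The injectivity half, however, has a real gap. The inference ``$p+\iota q\sim q+\iota p$ as distinct effective divisors $\Rightarrow$ $\tGa$ is hyperelliptic'' is not valid for metric graphs. In the algebraic setting, two distinct equivalent effective degree-$2$ divisors force $h^0\ge 2$, hence rank $\ge 1$, hence hyperellipticity by Clifford. Tropically, however, $r(D)\ge 1$ means $|D-x|\neq\emptyset$ for \emph{every} $x$, and a complete linear system $|D|$ can contain many divisors while $r(D)=0$. The tropical Clifford theorem you cite takes $r(D)\ge 1$ as a hypothesis, not $|D|$ non-singleton. In fact, the paper's own \cref{ex:hyperellipticType} is a direct counterexample to your implication: there $\tGa$ is explicitly \emph{not} hyperelliptic, yet $p-\iota p\sim q-\iota q$ for distinct $p,q$, i.e.\ $p+\iota q$ and $q+\iota p$ are distinct equivalent degree-$2$ effective divisors. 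So you never legitimately enter the hyperelliptic regime, and the appeal to the sub-argument of \cref{commutative} (which requires $\Gamma$ hyperelliptic to know that $2\pi(p)$ has rank $1$) does not get off the ground.

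The paper's proof of injectivity is a direct chip-firing argument that does not pass through hyperellipticity. One takes $\phi$ with $\ddiv\phi=(q+\iota p)-(p+\iota q)$, uses that $e$ is a bridge to see that both components $\Sigma_1,\Sigma_2$ of $\tGa\setminus\{p,\iota p\}$ are $\iota$-invariant (so $q,\iota q$ lie in the same $\Sigma_i$), analyses the boundary of the region where $\phi$ is minimal, and then exploits that the ``far'' component $\Sigma_2$ is itself a connected double cover of positive genus to find two paths from $p$ to $\iota p$ along which the slopes of $\phi$ are incompatible with continuity. Your structural setup (connected $\tGa_i$, $\iota$-invariance) is exactly what feeds into this argument; what is missing is the PL-function analysis in place of the Clifford shortcut.
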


\begin{proof}

    Fix an orientation on $\Gamma$
 and a  consistent orientation on $\wt\Gamma$.    Using  \cite[Construction B]{LZ22}, there is a basis for the anti-symmetric cycles of $\wt\Gamma$, such that every cycle $\gamma$ that passes through $e'$ also passes through $e''$ with opposite orientation and satisfies $\gamma(e') = \pm 2$. It then follows from the definition of the metric on the image of the Abel--Prym map  (\cref{eq:PrymMetric} in \cref{sec:preliminaries}) that the image of $e'$ via $\Psi$ will have twice the length as $e'$.

We now turn to the  injectivity of $\Psi$ at $e'$. We will assume that $\wt\Gamma$ doesn't have any bridges, since they get contracted by the Abel--Prym map and don't play a role here. Suppose that $p$ is a point in the interior of $e'$ and that $q$ is another point of $\wt\Gamma$ such that  $\Psi(p) = \Psi(q)$, or what is the same,  $p+\iota q\simeq q+\iota p$. 

Denote $\phi$ the piecewise linear function such that $p+\iota q = q + \iota p + \ddiv\phi$. By moving $p+\iota q$ a short distance towards $q+\iota p$, we can assume that both $q$ and $\iota p$ are in the interior of edges.

Since   $e$ is a bridge, it follows that the two connected components of $\wt\Gamma\setminus\{p,\iota p\}$ are invariant under the involution $\iota$. In particular, $q$ and $\iota q$ belong to the same component of $\wt\Gamma\setminus\{p,\iota p\}$.
Denote $\Sigma_1,\Sigma_2$ the connected components of $\wt\Gamma\setminus\{p,\iota p\}$ such that $q,\iota q\in\Sigma_1$. We will show that under our assumptions, $\phi$ cannot be continuous.

Since $p + \iota q$  is a movable divisor (namely it is equivalent to at least one other effective divisor), the complement of $\{p,\iota q\}$  is disconnected. By the no-bridge assumption, both of the components of the complement are connected. 
We claim that $q$ and $\iota p$ are both on the same  connected component of the complement. 
Indeed, the function $\phi$ may only bend upwards at $p$ and $\iota q$ and only bend downwards to $q$ and $\iota p$. Therefore, if $A$ is the region where $\phi$  obtains its minimal value, its boundary consists of $p$ or $\iota q$ or both. If it consists of both then the claim is proven. 
If, on the other hand, the boundary consists of only one of the points, then $\phi$ may only be continuous if this point belongs to a bridge, which is a contradiction. 

Note that $A$ is fully contained in $\Sigma_1$.  
In particular, the function $\phi$ must bend upwards at a tangent direction from $p$ into $\Sigma_2$. Let $\gamma$ be a path from $p$ to $\iota p$ in $\Sigma_2$ that starts at this tangent direction. Since $\phi$ only bends downwards at $q, \iota p$, we may assume that $\phi$ has slope 1 along the entire path. 
By the assumption that $\Sigma_2$ is, itself, a connected double cover, it must have genus at least $1$, 
so there must be another path  $\gamma'$ in $\Sigma_2$ between $p$ and $\iota p$.
The  slope of $\phi$ along $\gamma'$ is 1 wherever $\gamma$ and $\gamma'$ intersect and 0 everywhere else. But since both paths terminate at $\iota p$, the function $\phi$ cannot be continuous, a contradiction.

\end{proof}

Next, we deal with the case where $e$ is not a bridge and its preimages in $\wt\Gamma$ are not a disconnecting pair. 

\begin{lemma}\label{lem:notBridgeNotDisconnectingPair}
      Let $e$ be an edge of $\Gamma$ that is not a bridge  and suppose that the removal of its preimages  $e',e''$ doesn't disconnect $\wt\Gamma$.  
    Then $\Psi$ maps $e'$ and $e''$ isometrically onto their image. 
\end{lemma}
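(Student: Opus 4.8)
The plan is to read off the behaviour of $\Psi$ on $e'$ directly from the metric description in \cref{eq:PrymMetric}. According to that formula, $\Psi$ maps a short segment of $e'$ isometrically precisely when at least one element of the antisymmetric basis $B$ of \cite[Construction B]{LZ22} meets $e'$ with odd multiplicity; the other two possibilities (contraction, and dilation by a factor of $2$) require respectively that all the multiplicities $\gamma_i(e')$ vanish, or that they are all even with at least one equal to $\pm 2$. Since along the interior of an edge of the canonical model each cycle of $B$ has constant multiplicity, it suffices to produce a single integral antisymmetric cycle having odd multiplicity on $e'$; this then forces the isometric case for $e'$, and the whole edge inherits the isometry.

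First I would exploit the hypothesis that $\wt\Gamma\setminus\{e',e''\}$ is connected. The two endpoints of $e'$ are then joined by a path $P$ lying entirely in $\wt\Gamma\setminus\{e',e''\}$, so that $C:=e'\cup P$ is an oriented cycle traversing $e'$ exactly once and avoiding $e''$ altogether. The key is that $C$ uses $e'$ but not $e''$.

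Next I would antisymmetrise, setting $\gamma:=C-\iota(C)$; this is an antisymmetric cycle since $\iota\gamma=\iota C-C=-\gamma$. To compute $\gamma(e')$, orient $e'$ in the direction $C$ traverses it, so $C(e')=1$; because $\iota$ interchanges $e'$ and $e''$, the cycle $\iota C$ meets $e'$ with the same multiplicity as $C$ meets $\iota(e')=e''$, namely $0$. Hence $\gamma(e')=1$. Writing $\gamma=\sum_i c_i\gamma_i$ in the basis $B$, the coefficients $c_i$ are integers, as $B$ is a lattice basis for the integral antisymmetric cycles and $\gamma$ is such a cycle; therefore $1=\gamma(e')=\sum_i c_i\,\gamma_i(e')$ is odd, which forces $\gamma_i(e')$ to be odd for at least one $i$. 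By \cref{eq:PrymMetric} this places $e'$ in the isometric case. Finally, $\Psi\circ\iota=-\Psi$ together with the fact that negation is an isometry of the Prym torus and $\iota$ maps $e''=\iota(e')$ isometrically to $e'$ yields the same conclusion for $e''$ (equivalently, the hypotheses are symmetric in $e',e''$, so the identical construction applies).

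The only genuine subtlety, and the step I would treat most carefully, is the orientation and parity bookkeeping in evaluating $\gamma(e')$, together with the assertion that $B$ is a $\ZZ$-basis, which is what guarantees that the parity of $\gamma(e')$ is controlled by the parities of the individual $\gamma_i(e')$. Everything else is a direct appeal to the connectivity hypothesis and to \cref{eq:PrymMetric}.
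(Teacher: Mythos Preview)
Your proof is correct and follows essentially the same approach as the paper: both verify via \cref{eq:PrymMetric} that some element of the Construction~B basis meets $e'$ with odd multiplicity. The only difference is that the paper simply cites \cite[Construction B]{LZ22} for the existence of a basis cycle with multiplicity $\pm 1$ on $e'$, whereas you give a self-contained parity argument by building the auxiliary antisymmetric cycle $\gamma=C-\iota C$ with $\gamma(e')=1$ and deducing that not all $\gamma_i(e')$ can be even; this is a nice way to avoid unpacking the details of the cited construction.
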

 
\begin{proof}
Again, using    \cite[Construction B]{LZ22}, we have a basis for the anti-symmetric cycles of $\wt\Gamma$ such that at least one cycle passes through $e'$ with multiplicity $\pm 1$. The result now follows directly from the description of the metric on the image of the Abel--Prym map \cref{eq:PrymMetric}.
\end{proof}

We proceed to exhibit more refined properties of the Abel--Prym map. We begin by showing that,  under mild conditions, whenever the Abel--Prym map identifies two points of $\wt\Gamma$, their images in $\Gamma$ via $\pi$ belong to the same  2-connected component, namely, they are not separated by a bridge. In what follows, we say that $\Psi$ is \emph{finite} at a point $p$ if it is not in the interior of an edge contracted by $\Psi$.  

\begin{lemma}\label{lem:AbelPrymSeparatesTwoConnectedComponents}
Suppose that $\Gamma_1$ and $\Gamma_2$ are two connected graphs and that $\Gamma$ is the graph obtained by joining $\Gamma_1$ and $\Gamma_2$ along a point or a bridge, denoted $x$. 
Let $p$ and $q$ be distinct points of $\wt\Gamma$ where $\Psi$ is finite and $\Psi(p) = \Psi(q)$. Then $\pi(p)$ and $\pi(q)$ don't belong to the interior of $x$ and the following hold.
\begin{enumerate}
    \item If the preimages of $\Gamma_1$ and $\Gamma_2$ in $\wt\Gamma$ are connected then $\pi(p)$ and $\pi(q)$ belong to the closure of the same connected component of $\Gamma\setminus x$.

    \item If $\pi^{-1}(\Gamma_1)$ is connected but $\pi^{-1}(\Gamma_2)$ consists of two connected components, then either $p,q$ both belong to $\pi^{-1}(\Gamma_1)$ or they belong  to distinct components of $\pi^{-1}(\Gamma_2)$.
\end{enumerate}

\end{lemma}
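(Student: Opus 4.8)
The statement concerns a free double cover $\pi\colon\wt\Gamma\to\Gamma$ where $\Gamma=\Gamma_1\cup_x\Gamma_2$ is glued along a point or bridge $x$, and two distinct points $p,q$ with $\Psi(p)=\Psi(q)$ at which $\Psi$ is finite. The hypothesis $\Psi(p)=\Psi(q)$ unwinds, by definition of the Abel--Prym map, to the linear equivalence $p+\iota q\simeq q+\iota p$, so the entire argument should revolve around analysing the piecewise linear function $\phi$ realising this equivalence, i.e.\ with $p+\iota q = q+\iota p + \ddiv\phi$. This is the same mechanism already deployed in \cref{lem:edgeMappedWithMultiplicity2}, and I expect to reuse its bending analysis: $\phi$ bends upward only at the poles $p,\iota q$ and downward only at the zeros $q,\iota p$, so if $A$ denotes the region where $\phi$ attains its minimum, the boundary $\partial A$ must be supported on $\{p,\iota q\}$, and continuity of $\phi$ forces $\partial A$ to consist of \emph{both} points unless the single boundary point sits on a bridge.

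\textbf{Step 1: the points avoid the interior of $x$.} First I would establish that $\pi(p)$ and $\pi(q)$ cannot lie in the interior of $x$. If $x$ is a cut-vertex this is vacuous; the content is when $x$ is a bridge. Here I would invoke \cref{thm:behaviousOnEdges}: the preimage of a bridge $x$ is either a pair of bridges (case (1), contracted, contradicting finiteness of $\Psi$ at $p$ or $q$) or a properly disconnecting pair (case (2)), on which $\Psi$ is \emph{globally injective} by \cref{lem:edgeMappedWithMultiplicity2}. In the globally injective case, no point in the interior of such an edge can be identified with a distinct point, so neither $p$ nor $q$ maps into the interior of $x$. This disposes of the first assertion and lets me henceforth treat $\pi(p),\pi(q)$ as lying in $(\Gamma_i\setminus x)$ for appropriate $i$.

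\textbf{Step 2: the separation argument via the cut.} For part (1), assume both $\pi^{-1}(\Gamma_i)$ are connected, and suppose for contradiction that $\pi(p)$ and $\pi(q)$ lie in the closures of \emph{different} components of $\Gamma\setminus x$. Lifting through $\pi$, the point $x$ has a preimage $\pi^{-1}(x)$ which (being a non-bridge situation after contracting, or a controlled bridge locus) separates $\wt\Gamma$ into pieces covering $\Gamma_1$ and $\Gamma_2$; since each $\pi^{-1}(\Gamma_i)$ is connected, $p,\iota p$ lie over $\Gamma_1$ say and $q,\iota q$ over $\Gamma_2$, on opposite sides of the separating locus $\pi^{-1}(x)$. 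The minimum-region $A$ of $\phi$ must have boundary in $\{p,\iota q\}$, but $p$ and $\iota q$ now sit in different components of $\wt\Gamma$ relative to the cut over $x$; tracking the slopes of $\phi$ across $\pi^{-1}(x)$ and using that the only permissible upward bends are at $p,\iota q$ while downward bends are at $q,\iota p$, I would derive that $\phi$ cannot be simultaneously continuous and single-valued across the cut — precisely the continuity obstruction exploited in \cref{lem:edgeMappedWithMultiplicity2}. This contradiction forces $\pi(p),\pi(q)$ into the same component.

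\textbf{Step 3: the split case and the main obstacle.} For part (2), where $\pi^{-1}(\Gamma_2)$ splits into two components $\Gamma_2^{(1)},\Gamma_2^{(2)}$ interchanged by $\iota$ (this is forced since $\pi$ is free and the cover restricted to the disconnected-preimage side must swap sheets), I would argue that if $p,q$ do not both land in $\pi^{-1}(\Gamma_1)$ then they must lie in the two \emph{distinct} components of $\pi^{-1}(\Gamma_2)$. The key observation is that $\iota$ swaps $\Gamma_2^{(1)}$ and $\Gamma_2^{(2)}$, so if $q\in\Gamma_2^{(1)}$ then $\iota q\in\Gamma_2^{(2)}$; repeating the $\phi$-analysis, the placement of the minimum region and the bending constraints at $\{p,\iota q\}$ versus $\{q,\iota p\}$ should rule out the configuration where $p,q$ lie in the \emph{same} component of $\pi^{-1}(\Gamma_2)$, again via failure of continuity of $\phi$ across the vertex/bridge $\pi^{-1}(x)$. \textbf{The hardest step} is this last case analysis: one must carefully enumerate how $p,\iota p,q,\iota q$ distribute among $\pi^{-1}(\Gamma_1)$ and the two sheets of $\pi^{-1}(\Gamma_2)$, and show that every distribution other than the two allowed ones produces a monodromy/continuity contradiction for $\phi$. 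I would organise this by the location of the minimum region $A$ relative to the separating locus and systematically eliminate the bad configurations, which is where the combinatorial bookkeeping is most delicate.
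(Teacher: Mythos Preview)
Your overall architecture is right: introduce the PL function $\phi$ with $\ddiv\phi=(q-\iota q)-(p-\iota p)$, study its minimum locus $A$, and derive a contradiction from how $A$ crosses the cut $\pi^{-1}(x)$. Step~1 is essentially what the paper does, though the paper cites \cref{lem:contractedEdges} and \cref{lem:edgeMappedWithMultiplicity2} directly rather than going through \cref{thm:behaviousOnEdges}.

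The gap is in Step~2. You invoke ``the continuity obstruction exploited in \cref{lem:edgeMappedWithMultiplicity2}'', but that lemma's obstruction is a path argument inside a connected piece of genus $\geq 1$: two paths from $p$ to $\iota p$ force incompatible slopes. That mechanism does not transplant cleanly here, and you have not said what replaces it. The paper's actual key move is different and sharper: since $\ddiv\phi$ is \emph{antisymmetric} under $\iota$, the slope of $\phi$ on any edge $\iota e$ is minus its slope on $e$. Now $A$ is connected with boundary $\{p,\iota q\}$, which lie on opposite sides of $\pi^{-1}(x)$, so $A$ contains one lift $x'$ of $x$; hence $\phi$ is constant on $x'$, and by antisymmetry $\phi$ is constant on $x''=\iota x'$ as well. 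This lets you \emph{restrict} $\phi$ to $\pi^{-1}(\Gamma_1)$ without introducing boundary terms, yielding $p\simeq\iota p$ on the connected free double cover $\pi^{-1}(\Gamma_1)$ --- which is impossible. This antisymmetry-then-restrict step is the heart of the proof, and your proposal does not identify it.

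For Step~3 you anticipate a delicate case enumeration, but once the antisymmetry trick is in hand the paper dispatches part~(2) in two lines: if $p,q$ are badly distributed, the same restriction argument gives $q\simeq\iota q$ (or $p\simeq\iota p$) on the relevant connected piece, again a contradiction. So the ``hardest step'' you flag largely evaporates once the missing idea from Step~2 is supplied.
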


\begin{proof}
First, since $p$ and $q$ are distinct and $\Psi$ is finite at those points, it follows from \cref{lem:contractedEdges} and \cref{lem:edgeMappedWithMultiplicity2}   that $\pi(p)$ and $\pi(q)$ do not belong to the interior of $x$.   
As $\Psi(p) = \Psi(q)$, let $f$ be the piecewise linear function such that 
    \[
    \ddiv(f) = (q - \iota q) - (p-\iota p). 
    \]
    Then $f$ may only bend upwards at $p$ and $\iota q$ and may only bend downwards at $q$ and $\iota p$. Let $A$ be the region of $\wt\Gamma$ where $f$ obtains its minimum. Then $\ddiv(f) < 0$ on the boundary of $A$, so $A$ is connected with boundary points $p$ and $\iota q$. 
    
    Now, assume that both $\pi^{-1}({\Gamma_1})$ and $\pi^{-1}(\Gamma_2)$ are connected. For the sake of contradiction, assume that  $\pi(p)$ and $\pi(q)$ don't belong to the same  $\Gamma_i$.
    In particular,  $A$  must contain at least one of the lifts of $x$, denoted $x'$ and $x''$. Since $\ddiv f$ is antisymmetric, its slopes are antisymmetric, namely, the slope of $f$ on any edge $\iota e$ equals minus its slope on $e$. In particular, $f$ must be constant on both $x'$ and $x''$.  It then follows that, if $f_1$ is the restriction of $f$ to $\pi^{-1}(\Gamma_1)$, then $\ddiv f$ and $\ddiv f_1$ coincide on $\pi^{-1}(\Gamma_1)$ (and, as always, they coincide in the interior). That is, 
     $p + \ddiv f_1 = \iota p$. This is only possible if  $p$ and $\iota p$ are connected by a single disconnecting path or $p = \iota p$, both of which are impossible on a free double cover. 

Now, assume that  $\pi^{-1}(\Gamma_2)$ is disconnected. Then it must be that its connected components are swapped by the involution and that $\pi^{-1}(\Gamma_1)$ is connected (otherwise $\wt\Gamma$ is not connected).  
If $\pi(q)\in\Gamma_1$ and $\pi(p)\notin\Gamma_1$ then, as before, the set  $A$ contains one of the preimages of $x$. In particular, we see that $\iota q$ is equivalent to $q$ on $\pi^{-1}(\Gamma)$, which leads to a contradiction. The case $\pi(q)\in\Gamma_2$ is similar. 

\end{proof}

\begin{example}
\cref{figpic9}.(A) and \cref{figpic9}.(B) depict  the two situations in 
\cref{lem:AbelPrymSeparatesTwoConnectedComponents}. In the first case the two points $p$ and $q$ belong to the same connected component while, if the preimage is not connected, the two points belong to different components. Moreover, if the edge $e$ is not a bridge and lifts to a disconnecting pair as in (C), then $p$ and $q$ have to belong to distinct components in order to have $p+\iota q\simeq q+\iota p$.
This is the content of the next lemma, \cref{lem:AbelPrymSeparatesTwoConnectedComponents2}.
      \begin{figure}[h]
\begin{subfigure}[t]{0.4\linewidth}
\centering
\begin{tikzpicture}
\coordinate (2) at (0.5,0);
\coordinate (3) at (2,0);
\coordinate (a2) at (0.5,1.5);
\coordinate (a3) at (2,1.5);
\coordinate (b2) at (0.5,4);
\coordinate (b3) at (2,4);
  \coordinate (p) at (0.2,2.8);
  \coordinate (q) at (0.5,2.8);
  
 \foreach \i in {2,3,a2,b2,a3,b3}
 \draw[fill=black](\i) circle (0.15em);
 \draw[red,fill=red](p) circle (0.15em);
  \draw[red,fill=red](q) circle (0.15em);

\draw[thin, black] (2)--(3);
\draw[thin, black] (a2)--(a3);
\draw[thin, black] (b2)--(b3);

\draw[thin, black] (2) to [out=90, in=40] (0.3,0.3) to [out=40+180, in=70] (-0.1,0.2) to [out=70+180, in=65] (-0.3,0) ;
\draw[thin, black]  (-0.3,0)  to [out=65+180, in=140]  (-0.15,-0.25)  to [out=140+180, in=140] (0.3,-0.2) to [out=140+180, in=-90](2) ;

 \draw[thin, black] (3) to [out=90, in=120] (2.3,0.3) to [out=120+180, in=70+45] (2.65,0.2) to [out=70+30+180, in=100] (2.8,0) ;
\draw[thin, black]  (2.8,0)  to [out=270, in=140]  (2.5,-0.25)  to [out=140+180, in=90-180] (2.2,-0.2) to [out=90, in=-90](3) ;

\draw[thin, black] (b2) to [out=180+45, in=110] (0.2,3.6) to [out=110+180, in=40] (0,3) to 
[out=40+180, in=80] (0.1,2) to
[out=80+180, in=90] (a2) ;
\draw[thin, black]  (b2)  to [out=-65, in=140]  (0.8,3.5)  to [out=140+180, in=140] (0.8,2.3) to [out=140+180, in=45](a2) ;

\draw[thin, black] (b3) to [out=-65, in=110] (1.9,3.6) to [out=110+180, in=45] (1.8,2.8) to 
[out=45+180, in=80] (2.1,2) to
[out=80+180, in=90] (a3) ;
\draw[thin, black]  (b3)  to [out=-40, in=140]  (2.5,3.5)  to [out=140+180, in=140] (2.5,2.5) to [out=140+180, in=45](a3) ;

  \draw (1.25,1) node [below] {$\downarrow$};
\draw (1.25,0.4) node [below] {$e$};
  
  \draw (1.25,2) node [below] {$e'$};
  
  \draw (1.25,4.5) node [below] {$e''$};
  
  \draw[red] (p) node [below] {$p$};
  
  \draw[red] (q) node [below] {$q$};
  \end{tikzpicture}
 \caption{}
   \end{subfigure}
\hfill
\begin{subfigure}[t]{0.4\linewidth}
\centering
\begin{tikzpicture}
\coordinate (2) at (0.5,0);
\coordinate (3) at (2,0);
\coordinate (a2) at (0.5,1.5);
\coordinate (a3) at (2,1.5);
\coordinate (b2) at (0.5,4);
\coordinate (b3) at (2,4);
      
 \coordinate (p) at (2.2,4);
  \coordinate (q) at (2.5,1.5);
  
 \foreach \i in {2,3,a2,b2,a3,b3}
 \draw[fill=black](\i) circle (0.15em);

\draw[red,fill=red](p) circle (0.15em);
  \draw[red,fill=red](q) circle (0.15em);
  
\draw[thin, black] (2)--(3);
\draw[thin, black] (a2)--(a3);
\draw[thin, black] (b2)--(b3);

\draw[thin, black] (2) to [out=90, in=40] (0.3,0.3) to [out=40+180, in=70] (-0.1,0.2) to [out=70+180, in=65] (-0.3,0) ;
\draw[thin, black]  (-0.3,0)  to [out=65+180, in=140]  (-0.15,-0.25)  to [out=140+180, in=140] (0.3,-0.2) to [out=140+180, in=-90](2) ;


 \draw[thin, black] (3) to [out=90, in=120] (2.3,0.3) to [out=120+180, in=70+45] (2.65,0.2) to [out=70+30+180, in=100] (2.8,0) ;
\draw[thin, black]  (2.8,0)  to [out=270, in=140]  (2.5,-0.25)  to [out=140+180, in=90-180] (2.2,-0.2) to [out=90, in=-90](3) ;

\draw[thin, black] (b2) to [out=180+45, in=110] (0.2,3.6) to [out=110+180, in=40] (0,3) to 
[out=40+180, in=80] (0.1,2) to
[out=80+180, in=90] (a2) ;
\draw[thin, black]  (b2)  to [out=-65, in=140]  (0.8,3.5)  to [out=140+180, in=140] (0.8,2.3) to [out=140+180, in=45](a2) ;

 \draw[thin, black] (a3) to [out=90, in=120] (2.3,0.3+1.5) to [out=120+180, in=70+45] (2.65,0.2+1.5) to [out=70+30+180, in=100] (2.8,1.5) ;
\draw[thin, black]  (2.8,1.5)  to [out=270, in=140]  (2.5,-0.25+1.5)  to [out=140+180, in=90-180] (2.2,-0.2+1.5) to [out=90, in=-90](a3) ;

 \draw[thin, black] (b3) to [out=90, in=120] (2.3,0.3+4) to [out=120+180, in=70+45] (2.65,0.2+4) to [out=70+30+180, in=100] (2.8,+4) ;
\draw[thin, black]  (2.8,4)  to [out=270, in=140]  (2.5,-0.25+4)  to [out=140+180, in=90-180] (2.2,-0.2+4) to [out=90, in=-90](b3) ;
  \draw (1.25,1) node [below] {$\downarrow$};
 
  \draw (1.25,0.4) node [below] {$e$};
  
  \draw (1.25,2) node [below] {$e'$};
  
  \draw (1.25,4.5) node [below] {$e''$};
  \draw[red] (2,3.9) node [below] {$p$};
  
  \draw[red] (2.7,1.4) node [below] {$q$};

  \end{tikzpicture}
 \caption{}
\end{subfigure}
\hfill
\begin{subfigure}[t]{0.4\linewidth}
\centering
\begin{tikzpicture}
\coordinate (2) at (1.5,0);
\coordinate (1) at (1.85-0.5,0.35);
\coordinate (3) at (1,3.45);
\coordinate (3i) at (1,2.55);

\coordinate (4) at (0.5,3.6);
\coordinate (4i) at (0.5,2.4);

\coordinate (p) at (0.2,4.1);
  \coordinate (q) at (0.9,2);
\coordinate (ip) at (0.2,2);
  \coordinate (iq) at (0.9,4.1);

 \foreach \i in {2,1,3,3i,4,4i}
 \draw[fill=black](\i) circle (0.15em);
 
\draw[red,fill=red](p) circle (0.15em);
  \draw[red,fill=red](q) circle (0.15em);
   
\draw[red,fill=red](ip) circle (0.15em);
  \draw[red,fill=red](iq) circle (0.15em);

\draw[thin, black] (2) to [out=90, in=40] (1.2-0.5,0.5) to [out=40+180, in=70] (1-0.5-0.5,0.2) to [out=70+180, in=65] (-0.5,0) ;
\draw[thin, black]  (-0.5,0)  to [out=65+180, in=140]  (-0.5,-0.5)  to [out=140+180, in=140] (1.3-0.5,-0.5) to [out=140+180, in=-90](2) ;

\draw[thin, black] (3i)--(3);
\draw[thin, black] (4i)--(4);

\draw[thin, black] (1.5,2) to [out=90, in=40] (0.7,2+0.5) to [out=40+180, in=70] (0,2+0.2) to [out=70+180, in=65] (-1+0.5,2) ;
\draw[thin, black]  (-1+0.5,2)  to [out=65+180, in=140]  (-1+0.5,2-0.5)  to [out=140+180, in=140] (0.8,2-0.5) to [out=140+180, in=-90](1.5,2) ;

\draw[thin, black] (1+0.5,4) to [out=90, in=40] (0.7,4+0.5) to [out=40+180, in=70] (0,4+0.2) to [out=70+180, in=65] (-1+0.5,4) ;
\draw[thin, black]  (-1+0.5,4)  to [out=65+180, in=140]  (-1+0.5,4-0.5)  to [out=140+180, in=140] (0.8,4-0.5) to [out=140+180, in=-90](1.5,4) ;

\draw[thin, black] (2) to [out=20, in=-60] (1.4+0.5,0.5) to [out=-60+180, in=50] (0.8+0.5,0.3) ;

  \draw (0.5,1.3) node [below] {$\downarrow$};
  
  \draw (1.8,0.1) node [below] {$e$};
  
  \draw (0.25,3.3) node [below] {$e'$};
  
  \draw (1.3,3.4) node [below] {$e''$};
\draw[red] (p) node [below] {$p$};
  
  \draw[red] (q) node [below] {$q$};
  \draw[red] (ip) node [below] {$ip$};
  
  \draw[red] (iq) node [below] {$iq$};
  \end{tikzpicture}
 \caption{}
\end{subfigure}
\hfill
 \caption[below]{ }
  \label{figpic9}
 \end{figure}
\end{example}

We now deal with the case where a non-bridge edge lifts to a disconnecting pair.  

\begin{lemma}\label{lem:AbelPrymSeparatesTwoConnectedComponents2}
    Suppose that an edge $e$ of $\Gamma$ is not a bridge but its preimages are a disconnecting pair. Let $p$ and $q$ be distinct points of $\wt\Gamma$ where  $\Psi$ is finite and  $\Psi(p) = \Psi(q)$. Then $p$ and $q$ belong to distinct components of the graph obtained by removing the preimages of $e$. 
    \end{lemma}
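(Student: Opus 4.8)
The plan is to first read off the local shape of the cover at the disconnecting pair, then reduce the equivalence $p+\iota q\simeq q+\iota p$ to a linear equivalence taking place entirely inside one of the two pieces, and finally contradict finiteness via a region-of-minimum argument in the spirit of \cref{lem:AbelPrymSeparatesTwoConnectedComponents}. First I would describe $\tGa$: since $e$ is not a bridge, $\Gamma\setminus e$ is connected and the restriction $\pi\colon\tGa\setminus\{e',e''\}\to\Gamma\setminus e$ is again a free double cover; as its source is disconnected, it must be the trivial cover, so $\tGa\setminus\{e',e''\}$ is the disjoint union of two components $\Sigma_1,\Sigma_2$, each mapped isomorphically onto $\Gamma\setminus e$, and $\iota$ interchanges them. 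Writing $u_i,v_i\in\Sigma_i$ for the preimages of the endpoints $u,v$ of $e$, the two lifts are $e'=u_1v_2$ and $e''=\iota e'=u_2v_1$, and these are the only edges joining $\Sigma_1$ to $\Sigma_2$.

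Suppose for contradiction that $p$ and $q$ lie in the same component, say $\Sigma_1$, so that $\iota p,\iota q\in\Sigma_2$. Pick a piecewise linear $f$ with $\ddiv f=(p+\iota q)-(q+\iota p)$; as this divisor is antisymmetric, $f+f\circ\iota$ has trivial divisor and is therefore constant, so after subtracting a constant I may assume $f\circ\iota=-f$ (cf.\ the proof of \cref{lem:AbelPrymSeparatesTwoConnectedComponents}). Let $\alpha$ be the slope of $f$ on $e'$ in the direction leaving $\Sigma_1$; antisymmetry then forces the slope of $f$ leaving $\Sigma_1$ to equal $\alpha$ at \emph{both} $u_1$ and $v_1$. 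Restricting $f$ to the closed subgraph $\Sigma_1$ yields a piecewise linear function whose divisor is $(p-q)-\alpha(u_1+v_1)$, of degree $-2\alpha$; since any principal divisor has degree $0$, this gives $\alpha=0$. Hence $f$ is constant on $e'$ and $e''$, and $\ddiv(f|_{\Sigma_1})=p-q$, that is, $p\simeq q$ already within $\Sigma_1$.

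To finish I would contradict finiteness. Since $p\neq q$, the function $g:=f|_{\Sigma_1}$ is nonconstant, and its locus of minima $A$ satisfies $\partial A\subseteq\{p\}$, because $p$ is the only point of $\Sigma_1$ where $\ddiv g$ is positive; as $g$ is nonconstant this forces $\partial A=\{p\}$, so $\Sigma_1\setminus\{p\}$ is disconnected and $p$ lies on a bridge $\bar e$ of $\Sigma_1$. There are two cases. If $\bar e$ does not separate $u_1$ from $v_1$, then $\bar e$ is a bridge of $\tGa$; if it does, then $\{\bar e,\iota\bar e\}$ is a disconnecting pair lying over the non-bridge $\pi(\bar e)$ of $\Gamma$. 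In either case \cref{lem:contractedEdges} shows that $\Psi$ contracts $\bar e$, contradicting the hypothesis that $\Psi$ is finite at $p$. Therefore $p$ and $q$ must lie in distinct components, which is the claim.

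The step I expect to be delicate is the flatness computation $\alpha=0$: it hinges on getting the antisymmetry of the slopes of $f$ on $e'$ and $e''$ exactly right, and on excluding the degenerate possibility that $p$ or $q$ is an attaching vertex $u_1,v_1$ (or, in the last step, a cut \emph{vertex} rather than a point in an edge interior, where ``bridge'' could fail). As in \cref{lem:edgeMappedWithMultiplicity2}, I would first reduce to the case where $p$ and $q$ fall in the interiors of edges by sliding the effective divisor $p+\iota q$ a short distance within its linear system, after which the degree count and the cut-point argument apply directly.
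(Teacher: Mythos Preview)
Your proof is correct and follows essentially the same strategy as the paper's: use antisymmetry of $f$ to show it is constant on $e',e''$, deduce $p\simeq q$ within a single component $\Sigma_1$, and contradict finiteness of $\Psi$ at $p$. Your degree-count for $\alpha=0$ and the explicit bridge/disconnecting-pair case split at the end spell out in detail what the paper compresses into a one-line appeal to \cref{lem:AbelPrymSeparatesTwoConnectedComponents} and the phrase ``contractible component of $\tGa$''.
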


\begin{proof}
Since $\Psi$ is finite at $p$ and $q$ and contracts the preimages $e',e''$ of $e$, it follows that $p$ and $q$ are in their complement. Let $\phi$ be the function such that $p + \iota q + \ddiv \phi = q + \iota p$ and $A$ the set where the minimum of $\phi$ is obtained.  If we assume for the sake of contradiction that $p$ and $q$ belong to the same  component then, as in the proof of \cref{lem:AbelPrymSeparatesTwoConnectedComponents}, we see that $\phi$ is constant on both $e'$ and $e''$. It then follows that $p\simeq \iota q$, which can only happen if they both belong to a contractible component of $\wt\Gamma$, which is a contradiction.

\end{proof}

The following lemma is a step towards showing that the fibre of the Abel--Prym map, when finite, consists of at most $2$ points. 

\begin{lemma}\label{lem:disconnectingPairs}
    Suppose that $e_1,e_2,e_3,e_4$ are distinct non-bridge edges of a graph $\Gamma$ such that
    $\{e_1,e_2\}, \{e_2,e_3\},$ and $\{e_3,e_4\}$ are disconnecting pairs. Then $\{e_4,e_1\}$ is a disconnecting pair as well. 
\end{lemma}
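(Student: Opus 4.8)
The plan is to reduce the claim to the transitivity of a natural equivalence relation on the non-bridge edges of $\Gamma$. For an edge $e$, write $\mathcal{C}(e)$ for the set of simple cycles of $\Gamma$ passing through $e$. The technical heart of the argument is the following characterisation: \emph{for distinct non-bridge edges $e,f$, the pair $\{e,f\}$ is disconnecting if and only if $\mathcal{C}(e)=\mathcal{C}(f)$.} Granting this, the relation $e\sim f$ defined by $\mathcal{C}(e)=\mathcal{C}(f)$ is manifestly an equivalence relation, and among non-bridge edges its off-diagonal pairs are precisely the disconnecting pairs. The hypotheses then give $e_1\sim e_2$, $e_2\sim e_3$ and $e_3\sim e_4$, so transitivity yields $e_1\sim e_4$; since $e_1\neq e_4$, the pair $\{e_1,e_4\}$ is disconnecting, as desired.

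It remains to establish the characterisation, which is where the actual work lies. For the forward direction, suppose $\{e,f\}$ is disconnecting. Since neither edge is a bridge, $\{e,f\}$ is a minimal edge cut, so it partitions the vertex set into two parts $A$ and $B$ with $e,f$ the only edges joining them. Any cycle meets the cut in an even number of edges, and as the cut has size $2$, a cycle uses either both of $e,f$ or neither; hence $\mathcal{C}(e)=\mathcal{C}(f)$. For the converse, assume $\mathcal{C}(e)=\mathcal{C}(f)$. Because $e$ is not a bridge it lies on some cycle, so $f$ does as well; more importantly, every cycle through $f$ also passes through $e$, so no cycle of $\Gamma\setminus e$ contains $f$. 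Thus $f$ is a bridge of the connected graph $\Gamma\setminus e$, whence $\Gamma\setminus\{e,f\}$ is disconnected while $\Gamma\setminus e$ and $\Gamma\setminus f$ each remain connected. This is exactly the statement that $\{e,f\}$ is a disconnecting pair.

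I expect the main obstacle to be the clean formulation and proof of the characterisation, in particular the parity argument that a cycle crosses a minimal $2$-edge cut an even number of times. Alternatively, one may phrase the entire argument in the language of the cographic (dual) matroid of $\Gamma$, in the spirit of the matroidal perspective of \cite{RZ_matroidal}: the disconnecting pairs are exactly the two-element cocircuits of the cycle matroid $M(\Gamma)$, equivalently the two-element circuits of the dual $M^*(\Gamma)$, and two non-bridge edges lie in such a circuit precisely when they are parallel in $M^*(\Gamma)$. Transitivity of the relation then becomes transitivity of parallelism in a matroid, which is automatic, and the conclusion follows at once.
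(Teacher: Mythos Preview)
Your proof is correct. Your main argument---characterising disconnecting pairs via equality of cycle sets $\mathcal{C}(e)=\mathcal{C}(f)$ and then invoking transitivity of this equivalence relation---is genuinely different from the paper's primary proof, which proceeds by a direct decomposition: removing $e_1,e_2,e_3$ yields three components $A,B,C$, then one locates $e_4$ as a bridge of one of these and checks it remains a bridge of $\Gamma\setminus e_1$. Your approach is more conceptual and immediately yields the stronger statement that ``sharing a disconnecting pair'' is an equivalence relation on non-bridge edges, at the cost of proving the cycle characterisation; the paper's decomposition is more bare-hands and self-contained but less reusable. Your matroidal alternative is essentially identical to the remark the paper makes immediately after its proof (disconnecting pairs are $2$-circuits of the cographic matroid, and parallelism is transitive), so on that front you and the paper agree.
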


\begin{proof}
By assumption, when removing $e_1,e_2,$ and $e_3$ we are left with three connected components. Denote $A$ the one adjacent to the edges $e_1,e_3$, denote $B$ the one adjacent to the edges $e_1,e_2$, and denote     $C$ the one adjacent to the edges $e_2,e_3$. 
Since $\{e_3,e_4\}$ is a disconnecting pair as well, it follows that $e_4$ is a bridge of the graph obtained by removing $e_3$. In particular, it is a bridge of either $A,B$, or $C$. 

Now consider the graph $\Gamma'$ obtained by removing $e_1$ from $\Gamma$. Since we already know that $e_4$ is a bridge in one of $A,B,$ or $C$, it is a bridge of $\Gamma'$. In particular, $\{e_1,e_4\}$ is a disconnecting pair. 
\end{proof}

Alternatively, the lemma can be proven from the matroidal perspective as follows: any disconnecting pair is a 2-circuit of the co-graphic matroid. The result now follows since the relation of being part of a 2-circuit is transitive.

\begin{proposition}
    \label{prop:fibreSize}
    Let $\pi:\wt\Gamma\to\Gamma$ be a free double cover and suppose that $\Psi(p) = \Psi(q) = \Psi(r)$, where $p,q,r\in\wt\Gamma$ are points that are not adjacent to bridges and are not in the interior of the $\Psi$-contractible locus. 
     Then at least two of the points $p,q,r$ coincide. In particular, every fibre of the Abel--Prym map is either infinite or consists of at most two points.

\end{proposition}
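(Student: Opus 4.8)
The plan is to argue by contradiction: assume that $p,q,r$ are pairwise distinct and produce a configuration forbidden by \cref{thm:behaviousOnEdges}. The engine is the observation already exploited in \cref{lem:AbelPrymSeparatesTwoConnectedComponents} and \cref{lem:AbelPrymSeparatesTwoConnectedComponents2}, that an identification $\Psi(x)=\Psi(y)$ with $x\neq y$ forces a two-point cut of $\wt\Gamma$. Indeed $\Psi(x)=\Psi(y)$ means $x+\iota y\simeq y+\iota x$, and since $x\neq y$ (and $\iota$ is fixed-point free) these are distinct effective divisors with disjoint support, so there is a nonconstant $\phi$ with $x+\iota y+\ddiv\phi=y+\iota x$. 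The boundary of the region $A$ on which $\phi$ attains its minimum is contained in the support of the positive part of $\ddiv\phi$, namely $\{y,\iota x\}$; as $\phi$ is nonconstant, $A$ is a proper nonempty subset, so removing $\{y,\iota x\}$ disconnects $\wt\Gamma$. Because $x,y$ (and hence $\iota x,\iota y$) are not adjacent to bridges, neither point alone can be a cut-point, so in fact $\partial A=\{y,\iota x\}$ and, taking $x,y$ in the interiors of (necessarily non-bridge) edges, the edges $e_y$ and $\iota e_x$ carrying these points form a disconnecting pair. Writing $e\sim f$ for the relation ``$\{e,f\}$ is a disconnecting pair or $e=f$'', this reads $e_x\sim \iota e_y$. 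This passage from a two-point cut to a disconnecting pair of non-bridge edges is the main technical point, and it is exactly where the no-bridge hypothesis is indispensable.

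Applying this to the three identifications gives $e_p\sim\iota e_q$, $e_q\sim\iota e_r$ and $e_p\sim\iota e_r$. The relation $\sim$ is invariant under $\iota$ (a graph automorphism carries disconnecting pairs to disconnecting pairs) and is transitive, which is precisely the content of \cref{lem:disconnectingPairs} together with the remark that being part of a $2$-circuit of the cographic matroid defines an equivalence relation. From $e_p\sim\iota e_q$ we obtain $\iota e_p\sim e_q$; combining with $e_q\sim\iota e_r$ yields $\iota e_p\sim\iota e_r$, hence $e_p\sim e_r$; combining this with $e_p\sim\iota e_r$ finally gives $e_r\sim\iota e_r$. Since $\pi$ is free, $\iota$ has no fixed points, so $e_r\neq\iota e_r$ and $\{e_r,\iota e_r\}$ is a genuine disconnecting pair. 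Crucially, $e_r$ and $\iota e_r$ are exactly the two preimages of the edge $\pi(e_r)$ of $\Gamma$, so this is precisely the situation governed by \cref{thm:behaviousOnEdges}.

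Both possibilities now contradict the hypotheses on $r$. If $\pi(e_r)$ is not a bridge, then $\{e_r,\iota e_r\}$ is a properly disconnecting pair over a non-bridge edge, so by the second case of \cref{lem:contractedEdges} the map $\Psi$ contracts $e_r$, placing $r$ in the interior of the $\Psi$-contractible locus, contrary to assumption. If instead $\pi(e_r)$ is a bridge, then by \cref{lem:edgeMappedWithMultiplicity2} the map $\Psi$ is globally injective on the interior of $e_r$, which is incompatible with $\Psi(r)=\Psi(p)$ for $p\neq r$. Either way we reach a contradiction, so at least two of $p,q,r$ coincide. For the final assertion, observe that if a fibre $\Psi^{-1}(z)$ contains a point in the interior of the contractible locus or adjacent to a bridge, then it contains an entire contracted segment and is infinite; otherwise every point of the fibre satisfies the hypotheses of the proposition, so no three of them are distinct and the fibre consists of at most two points.
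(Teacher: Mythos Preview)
Your argument for points in the interior of edges is essentially the paper's, and your equivalence-relation phrasing of \cref{lem:disconnectingPairs} (coparallel classes in the cographic matroid) is a clean way to avoid checking that the four edges $e_p,\iota e_q,e_r,\iota e_p$ are distinct.

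However, you only treat the case where $p,q,r$ lie in the interiors of edges. The passage from ``removing $\{y,\iota x\}$ disconnects $\wt\Gamma$'' to ``$e_y$ and $\iota e_x$ form a disconnecting pair of edges'' genuinely requires $y$ and $\iota x$ to be $2$-valent: for a vertex of higher valence, removing the vertex is strictly stronger than removing any single incident edge, and your sentence ``not adjacent to bridges, hence not a cut-point'' is simply false for vertices (two cycles glued at a point give a cut-vertex adjacent to no bridge). The hypotheses of the proposition allow $p,q,r$ to be vertices, in particular vertices on the boundary of the cyclic $\Psi$-collapsible locus, and this case cannot be absorbed into the interior argument by a naive perturbation: there is no reason all three of $p,q,r$ admit preimages of a common tangent direction at $\Psi(p)$.

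The paper handles this in two further steps. When no edge at $p$ is contracted, it argues (briefly) that one can still extract incident edges $e_1,e_2$ forming a disconnecting pair and rerun the chain. When $p$ is adjacent to a contracted non-bridge edge $e'$, the argument is genuinely different: $\{e',\iota e'\}$ is a properly disconnecting pair and \cref{lem:AbelPrymSeparatesTwoConnectedComponents2} forces $q$ and $r$ into the other component of $\wt\Gamma\setminus\{e',\iota e'\}$, whence a second application of the same lemma to $q,r$ gives the contradiction. You should add this case; without it the proof is incomplete.
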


\begin{proof}
For the sake of contradiction, suppose that $p,q,$ and $r$ are distinct. In particular, we can choose a model for the graph where they belong to distinct edges. Assume first that they belong to the interior of those edges. Denote $e_1,e_2,e_3,e_4$  the edges containing $p,\iota q,r,\iota p$ respectively. 
    Then, since $p+\iota q\simeq q+\iota p$, it follows that $\{e_1,e_2\}$ is a disconnecting pair. Similarly, $\{e_2, e_3\}$ and $\{e_3, e_4\}$ are disconnecting pair. It now follows from \cref{lem:disconnectingPairs} that $\{e_4,e_1\}$ is a disconnecting pair. 
    
    Note that, since $\iota p\in e_4$, we have $e_4 = \iota e_1$. Let $e = \pi(e_1)$. If $e$ is not a bridge then, as the number of connected components of $\wt\Gamma\setminus\pi^{-1}(e)$ is larger than that of $\Gamma\setminus\ e$, it follows from \cref{lem:contractedEdges} that $\Psi$ contracts the edges $e_4$ and $e_1$, which is a contradiction. If $e$ is a bridge then, by the finiteness of $\Psi$,  it follows that $e_1$ and $e_4$ are non-bridges,  so by \cref{lem:edgeMappedWithMultiplicity2}, $\Psi$ has multiplicity $2$ at $p$ and is the unique point in the fibre of the Abel--Prym map over $\Psi(p)$, which, again, is a contradiction.

    The situation is not so different when we don't assume that the points necessarily belong to the interior of edges. Assume first that no edge adjacent to $p$ gets contracted by the Abel--Prym map. 
    In this case,  the relation $p+\iota q\simeq q+\iota p$ still implies that there are edges $e_1,e_2$ adjacent to $p,\iota q$ that form a disconnecting pair. Similarly, there are edges $e_3,e_4$ adjacent to $r$ and $\iota p$ so that $\{e_2,e_3\}$ and $\{e_3,e_4\}$ are disconnecting pairs, so $\{e_4,e_1\}$ is a disconnecting pair as well. Now a similar argument to the one above shows that $\Psi$ is either infinite or has multiplicity $2$ at $p$, which is a contradiction.

Finally, suppose that $p$ is adjacent to a non-bridge edge $e'$ that is contracted by $\Psi$. Then $\pi(e')$ is a bridge $e$ of $\Gamma$ and $\pi^{-1}(e)$ is an edge $e''$ that forms a properly disconnecting pair with $e'$. Denote $\wt\Gamma_1$ the connected component of $\wt\Gamma\setminus \{e',e''\}$ containing $p$ and $\wt\Gamma_2$ the other connected component. 
From \cref{lem:AbelPrymSeparatesTwoConnectedComponents2}, the points $q$ and $r$ belong to $\wt\Gamma_2$. But then they belong to the same connected component, contradicting \cref{lem:AbelPrymSeparatesTwoConnectedComponents2}. 
\end{proof}

\begin{remark}
 The statement of
 \cref{prop:fibreSize} may fail to hold for points that are adjacent to bridges, since all the boundary points of the bridge locus are mapped via Abel--Prym to the same point. 
As one can see in \cref{fig:bridges},  if  $p,q,$ and $r$  distinct points in the source graph adjacent to the three  bridges and to the loops, then they are all boundary points of the contracted locus that are identified by the Abel--Prym map.

     \begin{figure}[h]
                \centering\begin{tikzpicture}

\coordinate (c) at (0,0) {};

\coordinate (n1) at (1,0.7) {};
\coordinate (n2) at (1,0) {};
\coordinate (n3) at (1,-0.7) {};

\draw (c) -- (n1);
\draw (c) -- (n2);
\draw (c) -- (n3);

\draw (1.3,0.7) circle (0.3);
\draw (1.3,0) circle (0.3);
\draw (1.3,-0.7) circle (0.3);

\draw (-0.3,0) circle (0.3);

\coordinate (c1) at (0,0+5) {};

\coordinate (n11) at (1,5.7) {};
\coordinate (n21) at (1,5) {};
\coordinate (n31) at (1,-0.7+5) {};

\coordinate (c2) at (0,0+2.5) {};

\coordinate (n12) at (1,0.7+2.5) {};
\coordinate (n22) at (1,2.5) {};
\coordinate (n32) at (1,-0.7+2.5) {};
\coordinate (x) at (4,3.75) {};

\draw (c1) -- (n11);
\draw (c1) -- (n21);
\draw (c1) -- (n31);
\draw (c2) -- (n12);
\draw (c2) -- (n22);
\draw (c2) -- (n32);

\foreach \i in {c,x,n1,n2,n3,c1,n11,n21,n31,c2,n12,n22,n32}
\draw[fill=black](\i) circle (0.15em);

\draw (1.3,5.7) circle (0.3);
\draw (1.3,5) circle (0.3);
\draw (1.3,-0.7+5) circle (0.3);

\draw (1.3,0.7+2.5) circle (0.3);
\draw (1.3,2.5) circle (0.3);
\draw (1.3,-0.7+2.5) circle (0.3);

\

\node at (0.5,1.3) {$\downarrow$};
\node at (2.4,3.75) {$\rightarrow$};
\node at (2.4,4) {$\Psi$};

 \draw[thin]   (c1) to [out=-60, in=60] (c2);
				\draw[thin]   (c1) to [out=180+60, in=90+30] (c2);

\draw[thin]   (x) to [out=90+60, in=180] (4,4.2);
\draw[thin]   (4,4.2) to [out=0, in=30] (x);
\draw[thin]   (x) to [out=180+90, in=-90] (3.57,3.75);
\draw[thin]   (x) to [out=120, in=90] (3.57,3.75);
\draw[thin]   (x) to [out=120+180, in=-90] (4.45,3.75);
\draw[thin]   (x) to [out=60, in=90] (4.45,3.75);
\end{tikzpicture}
                \caption{A double cover whose $\Psi$-collapsible locus consists of the six bridges and a disconnecting pair of edges.}
                \label{fig:bridges}
            \end{figure}
\end{remark}

We now specialise to the case where the source graph of the double cover is hyperelliptic.

\begin{lemma}\label{shapeOfFibres}
   Suppose that $\wt\Gamma$ is hyperelliptic. If $p\in\wt\Gamma$ and $q = \iota\tj p$ then $\Psi(p) = \Psi(q)$.   Conversely, if  $\Psi(p) = \Psi(q)$ for  $p,q\in\wt\Gamma$ 
   that are not adjacent to bridges and are not in the interior of the $\Psi$-contractible locus,
then either $q = p$ or $q = \iota\tj p$.
\end{lemma}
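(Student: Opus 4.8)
The plan is to first translate the hypothesis into a statement about linear equivalence, then obtain the easy inclusion from the commutation relations of \cref{commutative}, and finally extract the converse from the fibre bound of \cref{prop:fibreSize}. By definition of the Abel--Prym map, $\Psi(p)=\Psi(q)$ is equivalent to the linear equivalence $p+\iota q\simeq q+\iota p$ of degree-$2$ divisors on $\tGa$.

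For the forward implication I take $q=\iota\tj p$. Let $D$ denote the unique divisor class of degree $2$ and rank $1$ on $\tGa$, so that $x+\tj x\simeq D$ for every $x\in\tGa$. Applying this to $p$ and to $\iota p$ gives $p+\tj p\simeq D\simeq \iota p+\tj\iota p$. Since $\tj\iota=\iota\tj$ by \cref{commutative}, the right-hand side equals $\iota p+\iota\tj p$, and cancelling $D$ yields $p+\tj p\simeq \iota p+\iota\tj p$, equivalently $[p-\iota p]=[\iota\tj p-\tj p]=\Psi(\iota\tj p)$. As $\Psi(p)=[p-\iota p]$, this proves $\Psi(p)=\Psi(\iota\tj p)$, which is the first sentence of the lemma.

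For the converse, suppose $p,q$ are finite points not adjacent to bridges and not in the interior of the $\Psi$-collapsible locus, with $\Psi(p)=\Psi(q)$. I first note that $\iota\tj p$ satisfies these same hypotheses: $\iota$ commutes with $\pi$, and $\tj$ is a model automorphism commuting with $\pi$ (\cref{commutative}), so both preserve bridges and, by its combinatorial description, the $\Psi$-collapsible locus of \cref{lem:contractedEdges}. By the forward direction, $\iota\tj p$ lies in the fibre $\Psi^{-1}(\Psi(p))$. Suppose first that $p\neq\iota\tj p$. Then $p$ and $\iota\tj p$ are distinct points of this fibre satisfying the finiteness hypotheses; since \cref{prop:fibreSize} guarantees that at most two such points can share a value of $\Psi$, they exhaust them, whence $q\in\{p,\iota\tj p\}$, as claimed.

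It remains to treat the case $p=\iota\tj p$, i.e. $\iota p=\tj p$, where the conclusion requires that the fibre reduce to the single point $p$. This is the step I expect to be the main obstacle. The idea is to show that, under the finiteness hypotheses, the equality $\iota p=\tj p$ localises $p$ exactly on the globally injective locus of \cref{lem:edgeMappedWithMultiplicity2}. Concretely, if $p$ lies in the interior of an edge $e'$, then $\iota p=\tj p$ forces $\tj e'=\iota e'$; denoting this common image $e''$, one finds that $\pi(e')$ is fixed by the hyperelliptic involution $j$ of $\Gamma$, and a short analysis of the degree-$2$ quotient $\Gamma\to\Gamma/j$ shows that such a fixed edge must be a bridge of $\Gamma$. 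Since $p$ is finite and not adjacent to a bridge, the preimages $e',e''$ cannot both be bridges, so $\{e',e''\}$ is a properly disconnecting pair lying over the bridge $\pi(e')$. This is precisely the configuration of \cref{lem:edgeMappedWithMultiplicity2} (and the bridge case singled out in the proof of \cref{commutative}), in which $\Psi$ is globally injective on the interior of $e'$; hence $\Psi^{-1}(\Psi(p))=\{p\}$ and $q=p$. The vertex case $p\in V(\tGa)$ is handled analogously by examining the edges incident to $p$. The delicate point throughout is this last reconciliation: pinning down the locus where $\iota$ and $\tj$ agree and matching it with the dilated, globally injective edges requires playing the fixed-point structure of the hyperelliptic involution against the bridge combinatorics of the cover.
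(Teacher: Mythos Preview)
Your forward direction and the generic case of the converse ($p \neq \iota\tj p$) are correct and match the paper: both use the hyperelliptic divisor relation together with the commutation from \cref{commutative}, then feed the triple $\{p,\,q,\,\iota\tj p\}$ into \cref{prop:fibreSize}.

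In the degenerate case $p = \iota\tj p$ your route differs from the paper's (which argues directly that $p\simeq q$ and then invokes the no-bridge hypothesis to get $p=q$), and it contains a gap. From $\iota p = \tj p$ you correctly deduce $\tj e' = \iota e'$ and hence $j(\pi(e')) = \pi(e')$ as an edge. But the claim that ``such a fixed edge must be a bridge of $\Gamma$'' only holds when $j$ fixes $\pi(e')$ \emph{pointwise}; if instead $j$ \emph{flips} $\pi(e')$, the edge need not be a bridge (any edge of a theta graph with equal edge lengths is flipped by $j$ and is not a bridge), and \cref{lem:edgeMappedWithMultiplicity2} does not apply. In this flip case $\pi(p)$ is forced to be the midpoint of $\pi(e')$, and your argument as written does not rule out a second point $q\neq p$ in the fibre.

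The gap is closable without leaving the toolkit already in hand. In the flip case one has $p + \iota p = p + \tj p$, which is the hyperelliptic divisor and hence has rank $1$, while $p$ and $\iota p$ are $2$-valent interior points. A Dhar-type argument (this is exactly the computation carried out later in \cref{fixedpoints}(ii), whose proof is independent of the present lemma) then shows that $\{p,\iota p\}$ disconnects $\tGa$, so $\{e',\iota e'\}$ is a disconnecting pair over the non-bridge $\pi(e')$. By \cref{lem:contractedEdges} this places $e'$ in the $\Psi$-collapsible locus, contradicting the hypothesis on $p$. Your one-line dismissal of the vertex case would need the same kind of care.
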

\begin{proof}
   
    Suppose first that  $q= \iota\tj p$. Then, since $q + \tj q\simeq p + \tj p$, it follows that $p - \tj q \simeq q - \tj p$.  But, since $\tj$ commutes with $\iota$, we have $\tj q = \tj\iota\tj p = \iota p$ and $\tj p = \iota q$, so $\Psi(p) = p-\iota p\simeq q-\iota q = \Psi(q)$.

 Conversely, suppose that $\Psi(q) = \Psi(p)$ and $p$ and $q$ are as in the statement. Denote $p' =\iota\tj p$. From the first part, $\Psi(p') = \Psi(p)$, so  \cref{prop:fibreSize} implies that either $p' = q$ or $p' = p$.  In the former case, 
  we have $q = p' = \iota\tj p$, so 
 we are done.  For the latter, by properties of the hyperelliptic involution, we have $p + \tj p\simeq \iota p + \tj\iota p$. But since $p'=p$, we have $p + \tj p\simeq \iota p + p$, so we conclude $\tj p\simeq \iota p$. Applying $\iota$ to both sides, we conclude $p\simeq q$. But we assumed that neither  $p$ nor $q$  is adjacent to a bridge, so it must be that $p=q$. 
\end{proof}

We are ready to prove one direction of \cref{maintheorem:degreeOfAbelPrym}.

\begin{theorem}\label{thm:ifHyperelliptic}
    Let $\pi\colon \tGa\to \Gamma$ be a  free double cover of metric graphs of genus $g_{\Gamma}\geq 2$, with $\wt\Gamma$ hyperelliptic and 2-edge-connected. Then the Abel--Prym map  $\Psi\colon \tGa\to \,\Prym(\tGa/\Gamma)$ is a harmonic morphism of degree 2.

    \begin{proof}

Let $p\in\wt\Gamma$.
     As the multiplicity of the Abel--Prym map is constant along edges, we may assume that $\val(p)\geq 3$.
If $\Psi$ contracts a neighbourhood of $p$, then for every edge emanating from $\Psi(p)$, there is no edge adjacent to $p$ mapping to it, so harmonicity holds vacuously. We may therefore assume that at least one edge emanating from $p$ is not contracted. From \cref{prop:fibreSize}, there is at most one other point $q$ such that $\Psi(q) = \Psi(p)$ and $q$ is not in the interior of the $\Psi$-contracted locus.

Suppose first that there is exactly one other such point $q\neq p$. From \cref{shapeOfFibres}, we have $q = \iota\tj p$. 
Let $e$ be an edge adjacent to $\Psi(p)$. Then there is an edge $f'$ adjacent to either $p$ or $q$ mapping to it. Without loss of generality, assume that $f'$ is adjacent to $p$. As $\iota\circ\tj$ is an automorphism, there is an edge $f''$ emanating from $q$  that maps to $e$ as well.
Since  the preimage of every edge consists of at most 2 edges and $\iota$ does not flip edges, 
it follows that no other edge adjacent to $p$ maps to $e$.
We conclude that the preimage of every edge adjacent to $\Psi(p)$ at a neighbourhood of $p$ is a single edge mapping with no dilation, so $\Psi$ is harmonic at $p$.

Now suppose  that $p$ is the unique point  mapping to $\Psi(p)$ which is not in the interior of the $\Psi$-collapsible locus.
Let $e$ be an edge adjacent to $\Psi(p)$. Then there is some edge $f$ of $\wt\Gamma$ mapping to it and, by continuity and uniqueness, that edge must terminate at $p$.   If $f$ is being dilated then it is the only edge mapping to $e$. If, on the other hand, 
$\Psi$ does not dilate $f$, let $f' =\iota(\tj f)$. We will show that $f'$ is distinct from $f$. Suppose for the sake of contradiction that $f'=f$ and let $q$ be a point on $f$.   Then $\pi(q)$ is a fixed point of $j$ since 
       \[
       j(\pi(q)) = \pi(\tj (q))  = \pi(\iota q) = \pi(q),
       \]
       where we use \cref{commutative} to show that $j\pi = \pi\tj$.
       In particular, $\pi(q)$ is part of a bridge. But then $\Psi$ either contracts $f$ or dilates it, which is a contradiction.  So $f'$ is distinct from $f$ and, since $p$ is the only point of $\wt\Gamma$ mapping to $\Psi(p)$, the edge $f'$ must be adjacent to $p$ as well.
We conclude that for every edge $f$ adjacent to $p$, its preimage consists of either a single dilated edge or two non-dilated edges, so $\Psi$ is harmonic at $p$.

\end{proof}
\end{theorem}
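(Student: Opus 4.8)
The plan is to check the definition of harmonicity directly at every point of $\wt\Gamma$ and then read off the global degree. Since the slope of $\Psi$ is constant along the interior of each edge, harmonicity is automatic at points of valency $2$, so I would first reduce to the finitely many points $p$ with $\val(p)\geq 3$. The hypothesis that $\wt\Gamma$ is $2$-edge-connected is what I would exploit at the outset: it guarantees that $\wt\Gamma$ has no bridges, so by \cref{thm:behaviousOnEdges} the only contracted edges are those of the cyclic $\Psi$-collapsible locus, every other edge is mapped with slope $1$ (isometrically) or slope $2$ (dilated), and the ``not adjacent to a bridge'' caveats in \cref{prop:fibreSize} and \cref{shapeOfFibres} are satisfied throughout $\wt\Gamma$. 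If every edge at $p$ is contracted then $\Psi$ collapses a neighbourhood of $p$ and harmonicity holds vacuously, so I may assume some edge at $p$ is not contracted; contracted edges contribute no direction at $\Psi(p)$ and thus drop out of the local-degree count.

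The organising tool is the involution $\sigma := \iota\circ\tj$. By \cref{commutative} the maps $\iota$ and $\tj$ commute, so $\sigma$ is an automorphism of $\wt\Gamma$ with $\sigma^2 = \Id$, and the first half of \cref{shapeOfFibres} gives $\Psi\circ\sigma = \Psi$. Hence $\Psi$ is constant on $\sigma$-orbits, and, since we have arranged that $\Psi$ is finite at $p$, \cref{prop:fibreSize} together with the converse half of \cref{shapeOfFibres} shows that the fibre of $\Psi$ over $\Psi(p)$ is precisely the $\sigma$-orbit $\{p,\sigma p\}$, of size $1$ or $2$. Moreover, since $\Psi$ is globally injective on the interior of a dilated edge (\cref{thm:behaviousOnEdges}), such interior points are $\sigma$-fixed, and by continuity of $\sigma$ no dilated edge can be incident to a point $p$ with $\sigma p\neq p$. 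This dichotomy on the fibre size drives the remaining argument.

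In the case $\sigma p\neq p$, I would use that $\sigma$ is an automorphism swapping $p$ and $\sigma p$ while commuting with $\Psi$: it matches the directions at $p$ mapping to a given edge $e$ at $\Psi(p)$ with the corresponding directions at $\sigma p$. Since the preimage of any edge of the Abel--Prym graph consists of at most two edges and $\iota$ does not flip edges, each such $e$ has exactly one preimage direction at $p$ (and one at $\sigma p$), all of slope $1$. Thus $m_\Psi(p)=1$ independently of $e$, proving harmonicity at $p$, and the two preimage points each contribute $1$, for total degree $2$. In the case $\sigma p = p$, the point $p$ is the unique preimage of $\Psi(p)$, so every preimage direction of a given edge $e$ is incident to $p$. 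If some such direction $f$ is dilated it is the only preimage of $e$ and contributes $2$; otherwise $f$ is isometric and I would pair it with $\sigma f=\iota\tj f$, a second direction at $p$ mapping to $e$, giving contribution $1+1=2$. Either way $m_\Psi(p)=2$ for every $e$, so $\Psi$ is harmonic of degree $2$.

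The main obstacle is the fixed-point case $\sigma p=p$, and specifically excluding $\sigma f = f$ for an isometric direction $f$, since that is exactly what would break the pairing and hence harmonicity. Here I would argue by contradiction: if $\sigma f=f$, then for a point $q$ on $f$ the identity $j\pi(q)=\pi\tj(q)=\pi\iota(q)=\pi(q)$ from \cref{commutative} forces $\pi(q)$ to be a fixed point of the hyperelliptic involution $j$, hence to lie on a bridge of $\Gamma$; but then the edge classification of \cref{thm:behaviousOnEdges} would force $f$ to be contracted or dilated, contradicting that $f$ is isometric. A final, more clerical point to verify is that when $p$ is simultaneously incident to contracted and non-contracted edges the constancy of $m_\Psi(p)$ is unaffected, which holds because contracted edges map to no direction at $\Psi(p)$ and therefore do not enter the harmonicity condition.
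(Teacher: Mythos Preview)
Your proof is correct and follows essentially the same approach as the paper's: reduce to vertices of valency at least $3$, use \cref{prop:fibreSize} and \cref{shapeOfFibres} to identify the fibre over $\Psi(p)$ with the $\iota\tj$-orbit $\{p,\iota\tj p\}$, and then split into the two cases according to whether this orbit has one or two elements, handling the fixed-point case by the same contradiction via \cref{commutative}. Your explicit packaging in terms of the involution $\sigma=\iota\circ\tj$ and your observation that dilated edges cannot be incident to a point with $\sigma p\neq p$ (since global injectivity on dilated edges forces their interior points to be $\sigma$-fixed) make transparent a point the paper leaves implicit when it asserts ``with no dilation'' in the two-point case.
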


Note that, when removing the assumption that $\wt\Gamma$ is 2-connected, the statement of the theorem no longer holds. 

\begin{example}\label{ex:bridgesRuinEverything}
Consider the graph seen in \cref{fig:bridges}. Let $v$ be a vertex of $\wt\Gamma$. The loop adjacent to $v$ maps with multiplicity 1 onto one of the loops adjacent to $\Psi(v)$. However, the preimage of each of the other loops adjacent to $\Psi(v)$ is empty and, in particular, $\Psi$ is not harmonic. 

 It's insightful to note that, after contracting the bridges of $\wt\Gamma$, we get a double cover of hyperelliptic graphs and the Abel--Prym map  becomes harmonic.

\end{example}

\begin{example}
 If contracting the cyclic $\Psi$-collapsible locus results in a hyperelliptic graph, it doesn't mean that the original graph was hyperelliptic. 
  \cref{fig:counterexampleofdeg2} depicts a non-hyperelliptic double cover of the chain of three loops. The Abel--Prym map contracts the bridges and it does not satisfy the definition of harmonicity on the points in the bridges. 

     \begin{figure}[H]
    \centering
 \begin{tikzpicture}
\coordinate (1) at (6,3.5);          
\coordinate (1x) at (1.5-2+0.5,0);
\coordinate (3x) at (-4,0);
\coordinate (a1x) at (1.5-4+2.5,2);
\coordinate (a3x) at (-4,2);
\coordinate (b1x) at (1.5-4+2.5,5);
\coordinate (b3x) at (-4,5);
\coordinate (2x) at (-2.5,0);
\coordinate (4x) at (-1.5,0);
						\coordinate (a2x) at (-2.5,2);
					\coordinate (a4x) at (-1.5,2);
						\coordinate (b2x) at (-2.5,5);
					\coordinate (b4x) at (-1.5,5);
				
				\foreach \i in {1x,2x,4x,3x,a1x,1,a2x,a4x,b2x,b4x,a3x,b1x,b3x}
				\draw[fill=black](\i) circle (0.15em);
				\draw[thin, black] (3x)--(2x);
					\draw[thin, black] (4x)--(1x);
				\draw[thin, black] (a4x)--(-1.95,3.3);
				\draw[thin, black] (-2.09,3.65)--(b2x);
				\draw[thin, black] (b4x)--(a2x);

\node[circle, draw, minimum size=1cm] (c) at (5.5,3.5) {};
\node[circle, draw, minimum size=1cm] (c) at (6.5,3.5) {};

				\node[circle, draw, minimum size=1cm] (c) at (-0.5-4,0) {};
                \node[circle, draw, minimum size=1cm] (c) at (-0.5-4,2) {};
                \node[circle, draw, minimum size=1cm] (c) at (-0.5-4,5) {};
				\node[circle, draw, minimum size=1cm] (c1) at (2-4+2.5,0) {};
					\node[circle, draw, minimum size=1cm] (c3) at (-2,0) {};
				\node[circle, draw, minimum size=1cm] (c3) at (0.5,2) {};
                \node[circle, draw, minimum size=1cm] (c3) at (0.5,5) {};
				\draw[thin, black] (a3x)--(a1x);
				\draw[thin, black] (b3x)--(b1x);

				\draw (-2,1.5) node [below] {$\downarrow$};
                \draw (-2,1.2) node [right] {$\pi$};
				\draw (3,3.5) node [below] {$\rightarrow$};
				\draw (3,3.5) node [above] {$\Psi$};
			
					\end{tikzpicture}
  \caption{A non-harmonic Abel--Prym map $\Psi$. }
    \label{fig:counterexampleofdeg2}
 \end{figure}

\end{example}

Next, we prove the converse to \cref{thm:ifHyperelliptic}.
\begin{theorem}\label{thm:harmonicImpliesHyperelliptic}
    Suppose that $\wt\Gamma$ is 2-edge-connected and that
    $\Psi$ is a  harmonic morphism of degree 2. Then $\wt\Gamma$ is hyperelliptic.

\end{theorem}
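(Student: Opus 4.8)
The strategy is to reverse-engineer a hyperelliptic structure on $\wt\Gamma$ directly from the degree-$2$ harmonicity of $\Psi$. The plan is to build a candidate involution on $\wt\Gamma$ using the fibres of $\Psi$ and then verify, via the Baker--Norine characterisation recalled after \cref{def:injectivity} (\cite[Theorem 3.13]{CH}), that the quotient is a tree. The key geometric input is \cref{shapeOfFibres} run backwards: since $\Psi$ has degree $2$, a generic point $p$ must have a genuine partner $q \neq p$ in its fibre (otherwise $\Psi$ would be globally injective and hence of degree $1$ away from the collapsible locus). This defines an involution $\tj$ on the complement of the $\Psi$-collapsible locus by $\tj(p) = \iota(q)$, i.e. the partner point composed with the free involution $\iota$. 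One then extends $\tj$ continuously across the collapsible locus (bridges and cyclic-collapsible subgraphs) using the $2$-edge-connectedness hypothesis to control the boundary behaviour.

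First I would fix a point $p$ in the interior of a non-collapsed edge and use harmonicity of degree $2$ together with \cref{prop:fibreSize} to show that the fibre $\Psi^{-1}(\Psi(p))$ consists of exactly two points $p, q$ when counted with the appropriate local multiplicities. Here one must separate the two cases that arise in the proof of \cref{thm:ifHyperelliptic}: either two distinct non-dilated edges map to a common image edge, or a single edge is dilated by a factor of $2$. In the dilated case the two ``points'' of the fibre collapse, corresponding precisely to a fixed point of the would-be hyperelliptic involution; by \cref{lem:edgeMappedWithMultiplicity2} this forces the image under $\pi$ to lie on a bridge of $\Gamma$. This dichotomy is exactly what one expects of a hyperelliptic involution, whose fibres are generically two points but degenerate to fixed points. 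I would then set $\tj(p) = \iota(q)$ and check it is an involution: applying the fibre relation $p + \iota q \simeq q + \iota p$ and the freeness of $\iota$, together with the anti-symmetry of the relevant piecewise-linear functions used throughout \cref{sec:behaviourofAP}, one gets $\tj^2 = \mathrm{Id}$.

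Next I would verify that $\tj$ is a harmonic morphism (indeed an isometry on each non-collapsed edge, by \cref{lem:notBridgeNotDisconnectingPair}) and that it extends continuously over the vertices and the $\Psi$-collapsible locus. The $2$-edge-connectedness of $\wt\Gamma$ is essential here: it rules out the pathological bridge behaviour of \cref{ex:bridgesRuinEverything}, guaranteeing that the fibre-pairing is well-defined and continuous everywhere rather than breaking down at boundary points of contracted bridges. Finally, to conclude hyperellipticity I would show that $\wt\Gamma / \tj$ is a tree. The cleanest route is to argue that $\Psi$ factors through the quotient $\wt\Gamma \to \wt\Gamma/\tj$, and that since $\Psi$ has degree $2$ while $\tj$ is a degree-$2$ involution, the induced map $\wt\Gamma/\tj \to \Psi(\wt\Gamma)$ is generically injective; counting first Betti numbers via the harmonic Riemann--Hurwitz relation for $\wt\Gamma \to \wt\Gamma/\tj$ then pins down the genus of the quotient. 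Showing the quotient has genus $0$, equivalently that every cycle of $\wt\Gamma$ is anti-invariant under $\tj$, is where the degree-$2$ hypothesis does its real work.

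\textbf{Main obstacle.} The hardest part will be the continuous extension of the fibre-defined involution $\tj$ across the $\Psi$-collapsible locus and across higher-valence vertices, and then proving that the quotient is genuinely a \emph{tree} rather than merely a lower-genus graph. The subtlety is that $\Psi$ is only generically two-to-one and degenerates on the collapsible locus, so the partner point $q$ may not vary continuously as $p$ approaches a contracted subgraph; one must use $2$-edge-connectedness and the structural description in \cref{thm:behaviousOnEdges} to rule out discontinuities. I would expect the genus computation — confirming that $\tj$ acts freely enough on cycles that the quotient has no independent loops — to require the most care, possibly appealing directly to the anti-symmetric cycle basis of \cite[Construction B]{LZ22} to show that every cycle in $\wt\Gamma$ is reversed by $\tj$, which is equivalent to the quotient being a tree.
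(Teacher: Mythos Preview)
Your setup matches the paper's: define $\tj(p) = \iota q$ where $q$ is the unique other point with $\Psi(q)=\Psi(p)$ (and $\tj(p)=\iota p$ when $p$ is alone in its fibre, i.e.\ on a dilated edge), then verify continuity at vertices using harmonicity of degree $2$. The paper also writes down the explicit extension over the cyclic $\Psi$-collapsible locus you allude to: on a properly disconnecting pair $e',e''=\iota e'$, parametrise both by $[0,\ell]$ so that $\iota$ is the identity between intervals, and let $\tj$ send position $x$ on $e'$ to position $\ell-x$ on $e''$.

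Where you diverge is the final step, and here you are making life harder than necessary. Your proposed route---show $\wt\Gamma/\tj$ is a tree via a factorisation $\wt\Gamma\to\wt\Gamma/\tj\to\Psi(\wt\Gamma)$ plus a genus count, or via anti-invariance of cycles---has two problems. First, $\Psi\circ\tj = -\Psi$ (since $\Psi(\iota q)=\iota q-q=-\Psi(q)$), so $\Psi$ does not literally factor through $\wt\Gamma/\tj$; you would need to further quotient the target by $\pm 1$. Second, even granting a factorisation, the genus of $\Psi(\wt\Gamma)$ is only computed later in the paper (\cref{structureOfImage}) \emph{using} hyperellipticity, so the count is circular. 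The paper sidesteps all of this by using the divisor-theoretic definition directly: it suffices that $p+\tj p$ moves for $p$ in the interior of an edge. But the fibre relation $p-\iota p\simeq q-\iota q$ rearranges to $p+\iota q\simeq q+\iota p$, i.e.\ $p+\tj p\simeq q+\tj q$. If these divisors are distinct, done. If they coincide as multisets with $q\neq p$, then $\{p,\iota q\}=\{q,\iota p\}$ forces $p=\iota p$, contradicting freeness of $\pi$. If $q=p$ (the dilated case), then $\tj p=\iota p$ and $\{p,\iota p\}$ is a properly disconnecting pair, so $p+\iota p$ moves along its edge. That is the whole argument; the obstacle you flagged does not arise.
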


\begin{proof}
    We need to define a map $\tj$ on $\wt\Gamma$ and prove that it is a hyperelliptic involution.   Let $p\in\wt\Gamma$ that is not in the $\Psi$-collapsible locus.  If there is $q\neq p$ such that $\Psi(q)=\Psi(p)$, define  $\tj(p) = \iota q$. Otherwise, define $\tj(p) = \iota p$.  Now, suppose that $p$ belongs to the cyclic $\Psi$-collapsible locus.  Then $p$ belongs to an edge $e'$ that forms, together with its involution $e''$, a properly disconnecting pair, and their image in $\Gamma$ is not a bridge. 
 Identify $e'$ and $e''$ with the intervals $I'=[0,\ell]$ and $I''=[0,\ell]$ (where $\ell$ is the length of $e$), so that $\iota$ corresponds to the identity morphism between the intervals, and let $x\in I'$ be the point identified with  $p$. Now define   $\tj(p)$ to be the point on the edge $e''$ corresponding to $\ell-x\in I''$.   For future reference, denote $q$ the point corresponding to $x$ on $I''$.

    We first argue  that $\tj$ is  continuous. 
    
    Clearly, the map is continuous along edges. Now let $v'$ be a vertex of $\wt\Gamma$ that is not in the interior of the $\Psi$-collapsible locus. Let $e'$ and $f'$ be edges emanating from $v'$ that are not being contracted by $\Psi$. If $\Psi$ is globally injective on both, then, by definition, $\tj$ coincides with $\iota$ and is clearly continuous.  If $\Psi$ is globally injective on neither, then there is another edge $e''$ mapping to $\Psi(e')$ with a vertex $v''$ mapping to $\Psi(v')$. By harmonicity of $\Psi$ at $v'$, there is an edge adjacent to $e''$ mapping to $\Psi(f')$. It follows that that the limits of $\tj$ on both tangent directions from $v'$ coincide. 
    Finally, suppose that $\Psi$ is globally injective on $e'$ but not on $f'$. Then, since the degree is $2$, it follows that $\Psi$ dilates $e'$ by a factor of $2$. By harmonicity at $v'$,  there is an edge $f''$ emanating from $v$ such that $\Psi(f'') = \Psi(f')$. It follows that  the limits of $\tj$ from the various tangent directions $v'$ coincide with $\tj(v')$.

   In order to prove that $\tj$ is a hyperelliptic involution,  it suffices to show that, whenever $p$ is in the interior of an edge, the divisor $p+\tj p$ moves. 
   By definition, for every $p$, we have $p + \tj p\simeq q + \tj q$, where $q$ is as above. If $p + \tj p \neq q + \tj q$ then we are done. Otherwise, if $q\neq p$, then $p = \iota p$, a contradiction. If $q = p$  
    then $p$ and $\iota p$  form a disconnecting pair so $p+\iota p$ moves along the entire edge. But $\iota p = \tj p$ in this case, so we are done.

   \end{proof}

\subsection{The non-hyperelliptic case}
We finish the section with   examples  illustrating the behaviour of the Abel--Prym map when the source graph is not hyperelliptic. 
In the algebraic case, if the source curve is not hyperelliptic then the Abel--Prym map is an embedding (\cite{BL04},\cite{LZ22}).
However, in the tropical case, when the tropical Abel--Prym map is not harmonic of degree 2, it does not necessarily follow that it has degree 1 everywhere injective. It could have degree 1 on parts of the graph and degree 2 elsewhere.

\begin{example}\label{ex:hyperellipticType}
Consider the double cover of metric graphs depicted in \cref{hyptype}. The source graph is not hyperelliptic and the fibres of the Abel--Prym map consist of either  $1,2,$ or infinitely many points. Indeed, for any point $p$ as in the figure, there is a unique corresponding point $q\neq p$ such that $[p-\iota(p)] = [q-\iota(q)]$, so $\Psi$ coincides on the edges containing $p$ and $q$. The dashed edges get contracted, and $\Psi$ is injective with multiplicity 1 everywhere else. It follows that $\Psi$ doesn't satisfy the definition of harmonicity at the vertices $v_1$ and $v_2$.

    \begin{figure}[ht]
    \centering
\begin{tikzpicture}
\coordinate (2) at (2,1.5);
\coordinate (3) at (1,1.5);
\coordinate (12) at (-1,1.5);
\coordinate (a2) at (2,5);
\coordinate (a3) at (1,5);
\coordinate (a12) at (-1,5);
\coordinate (b2) at (2,8);
\coordinate (b3) at (1,8);
\coordinate (b12) at (-1,8);
\coordinate (b4) at (1.4,8.7); 
\coordinate (a4) at (1.4,4.3);
\coordinate (b5) at (-0.4,8.7);
\coordinate (a5) at (-0.4,4.3);
\coordinate (c2) at (8,6.5);
\coordinate (c3) at (7,6.8);
\coordinate (c3') at (6,6.2);
\coordinate (c12) at (5,6.5);
 
 \foreach \i in {2,3,12,a2,a3,a12,b2,b3,b12,c2,c3,c3',c12}
 \draw[fill=black](\i) circle (0.15em);

\draw[red,fill=red](b4) circle (0.20em);
\draw[red,fill=red](b5) circle (0.20em);
\draw[red](a4) circle (0.20em);
\draw[red](a5) circle (0.20em);
 \draw[thin, black] (12)--(3);
 \draw[thin, black] (a12)--(a3);
 \draw[thin, black] (b12)--(b3);
 \draw[thin, black] (12) to [out=90, in=90] (2);
\draw[thin, black] (12) to [out=90+180, in=-90] (2);
\draw[thin, black] (3) to [out=90, in=90] (2);
\draw[thin, black] (3) to [out=270, in=-90] (2);
\draw[thin, black] (b12) to [out=90, in=90] (b2);
\draw[thin, black] (a12) to [out=270, in=-90] (a2);
\draw[thin, black] (b3) to [out=90, in=90] (b2);
\draw[thin, black] (b3) to [out=270, in=-90] (b2);
\draw[thin, black] (a3) to [out=90, in=90] (a2);
\draw[thin, black] (a3) to [out=270, in=-90] (a2);

\draw[thin, black] (c12) to [out=90, in=90] (c2);
\draw[thin, black] (c12) to [out=45, in=170] (c3);
\draw[thin, black] (c3') to [out=-20, in=180+45] (c2);
\draw[thin, black] (c3) to [out=50, in=90] (c2);
\draw[thin, black] (c3) to [out=300, in=180] (c2);
\draw[thin, black] (c12) to [out=40, in=90+45] (c3');
\draw[thin, black] (c12) to [out=300, in=180+45] (c3');

  \draw[thin, dashed, black] (b12) to [out=-40, in=-260] (a2);
  
\draw[thin, dashed, black] (a12) to [out=40, in=180+45] (1,6.4);
    \draw[thin, dashed, black] (1.2,6.6) to [out=45, in=260] (b2);
  \draw (0.5,3.5) node [below] {$\downarrow$};
   \draw (0.8,3.4) node [below] {$\pi$};
   \draw (b4) node [above] {$p$};
   \draw (b5) node [above] {$\iota(q)$}; 
   \draw (a4) node [below] {$\iota(p)$};
   \draw (a5) node [below] {$q$};
\draw (3.5,6.4) node [above] {$\Psi$};
   \draw (3.5,6.5) node [below] {$\rightarrow$};
   \draw (b2) node [right] {$v_2$};
   
    \draw (b12) node [below] {$v_1$};
   \end{tikzpicture}
 \caption{A free double cover of a non-hyperelliptic metric graph with points $p$ and $q$ such that $p-\iota(p)\simeq q-\iota(q)$.}
    \label{hyptype}
\end{figure}
\end{example}

\begin{remark}\label{rem:hyperellipticType}
    The source graph in \cref{ex:hyperellipticType}
    is of hyperelliptic type but not hyperelliptic, meaning that its Jacobian is isomorphic to the Jacobian of a hyperelliptic graph (see \cite{C} for a detailed account of such graphs). 
  It is natural to ask whether the non-injectivity of the Abel--Prym map only happens for graphs of hyperelliptic type. However, this is not the case as exemplified in the double cover appearing in \cref{fig:nonhyptype}.
\end{remark}

\begin{remark} 
From \cref{ex:hyperellipticType}, we see that the Abel--Prym map is not always  harmonic  onto its image. Indeed, although the map identifies pairs of edges, there are also edges where the map is injective and has multiplicity $1$, contradicting the definition of harmonicity.
However, it is a parameterized tropical curve in the sense of \cite[Definition 3.1]{Blomme_CurvesInAbelian1}.
\end{remark}

            \begin{figure}[h]
                \centering\begin{tikzpicture}
				
				\coordinate (1x) at (1.5-2+0.5,0);
				\coordinate (3x) at (-4,0);
				\coordinate (a1x) at (1.5-4+2.5,2);
				\coordinate (a3x) at (-4,2);
				\coordinate (b1x) at (1.5-4+2.5,5);
				\coordinate (b3x) at (-4,5);
					\coordinate (2x) at (-2.5,0);
					\coordinate (4x) at (-1.5,0);
						\coordinate (a2x) at (-2.5,2);
					\coordinate (a4x) at (-1.5,2);
						\coordinate (b2x) at (-2.5,5);
					\coordinate (b4x) at (-1.5,5);
				
				\foreach \i in {1x,2x,4x,3x,a1x,a2x,a4x,b2x,b4x,a3x,b1x,b3x}
				\draw[fill=black](\i) circle (0.15em);
				\draw[thin, black] (3x)--(2x);
					\draw[thin, black] (4x)--(1x);
				\draw[thin, black] (a4x)--(-1.95,3.3);
				\draw[thin, black] (-2.09,3.65)--(b2x);
				\draw[thin, black] (b4x)--(a2x);

				\node[circle, draw, minimum size=1cm] (c) at (-0.5-4,0) {};
				\node[circle, draw, minimum size=1cm] (c1) at (2-4+2.5,0) {};
					\node[circle, draw, minimum size=1cm] (c3) at (-2,0) {};
				
				\draw[thin, black] (a3x)--(a1x);
				\draw[thin, black] (b3x)--(b1x);

				\draw (-2,1.5) node [below] {$\downarrow$};
				
				\draw[thin, black] (b3x) to [out=-45, in=45] (a3x);
				\draw[thin, black] (b3x) to [out=180+45, in=90+45] (a3x);
				\draw[thin, black] (b1x) to [out=-45, in=45] (a1x);
				\draw[thin, black] (b1x) to [out=180+45, in=90+45] (a1x);
			
					\end{tikzpicture}
                \caption{A graph of non-hyperelliptic type covering a hyperelliptic graph  where the Abel--Prym map is not harmonic.}
                \label{fig:nonhyptype}
            \end{figure}

Next, we show that, while injectivity of the Abel--Prym map is not guaranteed in the non-hyperelliptic case, it can still happen.

\begin{example}
   \cref{notinnerelliptic} shows a   double cover of graphs, obtained by gluing together two copies $\wt\Gamma_1$ and $\wt\Gamma_2$  of the graph from  \cref{ex:hyperellipticType}. The vertical edges of $\wt\Gamma$ are no longer properly disconnecting pairs, so they don't get contracted by the Abel--Prym map. Furthermore, from \cref{thm:behaviousOnEdges}, the multiplicity of $\Psi$ is always 1. From \cref{lem:AbelPrymSeparatesTwoConnectedComponents}, if $\Psi(p) = \Psi(q)$ for some points $p,q\in\wt\Gamma$, then $p$ and $q$ both belong to either $\wt\Gamma_1$ or $\wt\Gamma_2$. By exhausting all the possibilities, we see that $p$ and $q$ must coincide, so $\Psi$ is injective of degree 1, namely a graph isomorphism onto its image.

    \begin{figure}[h]
    \centering
\begin{tikzpicture}
	
	\coordinate (2) at (2,1.5);
	\coordinate (3) at (1,1.5);
       \coordinate (12) at (-1,1.5);
	\coordinate (a2) at (2,5);
	\coordinate (a3) at (1,5);
       \coordinate (a12) at (-1,5);
	\coordinate (b2) at (2,8);
	\coordinate (b3) at (1,8);
       \coordinate (b12) at (-1,8);
 \coordinate (4) at (2,1.5);
 \coordinate (a4) at (2,5);
 \coordinate (b4) at (2,8);

 \coordinate (2i) at (5,1.5);
	\coordinate (3i) at (4,1.5);
       \coordinate (12i) at (2,1.5);
	\coordinate (a2i) at (5,5);
	\coordinate (a3i) at (4,5);
       \coordinate (a12i) at (2,5);
	\coordinate (b2i) at (5,8);
	\coordinate (b3i) at (4,8);
       \coordinate (b12i) at (2,8);
       
 \foreach \i in {2,3,12,a2,4,a3,a4,a12,b2,b3,b4,b12,2i,3i,12i,a2i,a3i,a12i,b2i,b3i,b12i}
 \draw[fill=black](\i) circle (0.15em);
\draw[thin, black] (2)--(4);
\draw[thin, black] (a2)--(a4);
\draw[thin, black] (b2)--(b4);
 \draw[thin, black] (12)--(3);
 \draw[thin, black] (a12)--(a3);
 \draw[thin, black] (b12)--(b3);
 \draw[thin, black] (12) to [out=90, in=90] (2);
\draw[thin, black] (12) to [out=90+180, in=-90] (2);
\draw[thin, black] (3) to [out=90, in=90] (2);
\draw[thin, black] (3) to [out=90+180, in=-90] (2);
\draw[thin, black] (b12) to [out=90, in=90] (b2);
\draw[thin, black] (a12) to [out=90+180, in=-90] (a2);
\draw[thin, black] (b3) to [out=90, in=90] (b2);
\draw[thin, black] (b3) to [out=90+180, in=-90] (b2);
\draw[thin, black] (a3) to [out=90, in=90] (a2);
\draw[thin, black] (a3) to [out=90+180, in=-90] (a2);

  \draw[thin, black] (b12) to [out=-40, in=-260] (a2);
  
\draw[thin, black] (a12) to [out=40, in=180+45] (1,6.4);
    \draw[thin, black] (1.2,6.6) to [out=45, in=260] (b2);

 \draw[thin, black] (12i)--(3i);
 \draw[thin, black] (a12i)--(a3i);
 \draw[thin, black] (b12i)--(b3i);
 \draw[thin, black] (12i) to [out=90, in=90] (2i);
\draw[thin, black] (12i) to [out=90+180, in=-90] (2i);
\draw[thin, black] (3i) to [out=90, in=90] (2i);
\draw[thin, black] (3i) to [out=90+180, in=-90] (2i);
\draw[thin, black] (b12i) to [out=90, in=90] (b2i);
\draw[thin, black] (a12i) to [out=90+180, in=-90] (a2i);
\draw[thin, black] (b3i) to [out=90, in=90] (b2i);
\draw[thin, black] (b3i) to [out=90+180, in=-90] (b2i);
\draw[thin, black] (a3i) to [out=90, in=90] (a2i);
\draw[thin, black] (a3i) to [out=90+180, in=-90] (a2i);
  \draw[thin, black] (b12i) to [out=-40, in=-260] (a2i);
\draw[thin, black] (a12i) to [out=40, in=180+45] (4,6.4);
    \draw[thin, black] (4.2,6.6) to [out=45, in=260] (b2i);

  \draw (2,3.5) node [below] {$\downarrow$};
  
  \end{tikzpicture}
  
 \caption{Example where the Abel--Prym is injective everywhere.}
    \label{notinnerelliptic}
\end{figure}
\end{example}

\section{Free double covers of hyperelliptic graphs}\label{sec:hyperellipticDoubleCovers}

{  
In this section we exhibit an explicit construction of the free double covers of hyperelliptic graphs, leading to a strong characterisation for when such covers are themselves hyperelliptic, as well as an enumeration of all such covers. 
A key role in the construction will be played by the fixed points of the hyperelliptic involution. 
Throughout the section  $\pi\colon \tGa\to \Gamma$ will be a free double cover of metric graphs with involution $\iota$ on $\tGa$ exchanging the fibres and hyperelliptic involution $j$ on $\Ga$. Note that $\tGa$ is not assumed to be hyperelliptic.

\subsection{The fixed points of the hyperelliptic involution}\label{sec:fixedpoints}
    The hyperelliptic involution  $j$ of a 2-edge-connected metric graph $\Gamma$ of genus $g$ has $g+1$ fixed points. Indeed, since $j$ is a non-degenerate harmonic morphism of degree $2$ from $\Gamma$ to a tree $T$, the Riemann-Hurwitz formula \cite[Section 2.10]{ABBR} implies that
 the degree of the ramification divisor is given by
\begin{equation}\label{numberOfFixedPoints}
        \deg(R_j)=\deg(K_{\Gamma})-\deg(j^*(K_T)).
    \end{equation}
    Since $\deg j^*(K_T)=2\deg(K_T)=-4$, the ramification divisor has degree $2g+2$. But each ramification point has multiplicity 2, so there are $g+1$ of them, see \cite[Proposition 9]{Panizzut}.

\begin{remark}
 Under the assumption that hyperelliptic graphs don't have any leaf-edges, every cut point of $\Gamma$ is fixed by the hyperelliptic involution. 

\end{remark}

\begin{remark}\label{rem:onlytwochoicesfortheinvolution}
By \cref{commutative}, the preimages via $\pi$ of a fixed point $x\in \Ga$ with respect to $j$, are either fixed  or exchanged by the involution $\tj$. Indeed, if $p$ and $\iota(p)$ are in the fibre $\pi^{-1}(x)$, then 
 $\pi(\tj p) = j \pi (p) = j x = x$, so either $\tj p = p$  or $\tj p=\iota p$. 
\end{remark}

\begin{proposition}\label{prop:valencyoffixedpoints}
    Let $\Gamma$ be a $2$-edge connected hyperelliptic graph  with hyperelliptic involution $j$ and let $p$ be a fixed point of $j$. Then the valency of $p$ equals $2k$, where $k$ is the number of components of $\Gamma\setminus{p}$.
\end{proposition}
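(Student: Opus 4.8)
The plan is to exploit the quotient morphism $\phi\colon\Gamma\to T:=\Gamma/j$, which by \cite[Theorem 3.13]{CH} is a \emph{finite} harmonic morphism of degree $2$ onto a tree, and to compare the components of $\Gamma\setminus p$ with those of $T\setminus\phi(p)$. The starting observation is that, since $\phi$ is the quotient by $j$, every fibre is a single $j$-orbit; as $p$ is fixed by $j$, the fibre over $\bar p:=\phi(p)$ is exactly $\{p\}$. Consequently $\Gamma\setminus\{p\}=\phi^{-1}(T\setminus\{\bar p\})$, and because $T$ is a tree, $T\setminus\{\bar p\}$ has precisely $\val_T(\bar p)$ connected components $S_1,\dots,S_m$, each attached to $\bar p$ by a single edge $\bar e_i$.

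Next I would pin down the local degree. Since $\phi^{-1}(\bar p)=\{p\}$ and $\deg\phi=2$, the local degree satisfies $m_\phi(p)=2$; harmonicity then forces the edges of $\Gamma$ emanating from $p$ that map onto a given $\bar e_i$ to have slopes summing to $2$ (and, by finiteness, each slope is at least $1$). The edges at $p$ lying over $\bar e_i$ are exactly those entering $\phi^{-1}(S_i)$, since $S_i$ communicates with $\bar p$ in $T$ only through $\bar e_i$; thus the tangent directions at $p$ are partitioned according to the index $i$. It therefore remains to show that over each $\bar e_i$ there are exactly two edges, and that $\phi^{-1}(S_i)$ is a single component of $\Gamma\setminus p$.

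Here is where $2$-edge-connectivity enters, and this is the crux of the argument. If only one edge of $\Gamma$ at $p$ mapped onto $\bar e_i$, then this edge would be the unique link between $p$ and $\phi^{-1}(S_i)$, hence a bridge, contradicting the hypothesis; since the slopes over $\bar e_i$ sum to $2$ and each is at least $1$, there must be exactly two such edges, both of slope $1$. The same bridge obstruction shows $\phi^{-1}(S_i)$ cannot split into two components, as each piece would then be joined to $p$ by a single edge. Hence $\phi^{-1}(S_i)$ is one connected component of $\Gamma\setminus p$, joined to $p$ by exactly two edges.

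Assembling these facts, the components of $\Gamma\setminus p$ are exactly $\phi^{-1}(S_1),\dots,\phi^{-1}(S_m)$, so $k=m=\val_T(\bar p)$, while each contributes two tangent directions at $p$; summing gives $\val(p)=2k$. The main obstacle is the bookkeeping that simultaneously rules out dilated (slope-$2$) edges at $p$ and disconnected preimages $\phi^{-1}(S_i)$ — both are dispatched uniformly by the no-bridge hypothesis, which is precisely why $2$-edge-connectivity is indispensable.
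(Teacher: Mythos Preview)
Your proof is correct and follows essentially the same approach as the paper: pass to the quotient tree $T=\Gamma/j$, use that the fibre over $\bar p$ is $\{p\}$, and invoke $2$-edge-connectivity to see that each edge at $\bar p$ lifts to exactly two undilated edges at $p$. The paper's proof is considerably terser and asserts without elaboration that ``the valency of $q$ equals the number of components of $\Gamma\setminus p$''; your argument that each $\phi^{-1}(S_i)$ is connected (else a bridge would appear) supplies exactly the justification the paper omits.
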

\begin{proof}
Let $T$ be the quotient tree $\Gamma/j$ and $q$ the image of $p$ in $T$. By the assumption that there are no bridges, every edge adjacent to $q$ lifts to two edges adjacent to $p$ and, therefore, the valency of $p$ is even. Furthermore, the valency of $q$ equals the number of components of $\Gamma\setminus{p}$, so the result follows.

\end{proof}

\begin{proposition}\label{fixedpoints}
Suppose that $\Gamma$ and $\wt\Gamma$ are hyperelliptic with involutions  $\tj$ and $j$ respectively.  Let $x\in \Ga$ be a fixed point of $j$, and let $\{p,\iota(p)\}=\pi^{-1}(x)$. Then, 
    \begin{itemize}
        \item[i)] If $x$ is a cut vertex, then $p$ is a fixed point of $\tj$ $\Longleftrightarrow $ $p$ and $\iota(p)$ are each cut vertices;
        \item[ii)]  If $x$ is not a cut vertex, then $p$ is fixed by $\tj$ $\Longleftrightarrow $ $p$ and $\iota(p)$ are not a disconnecting pair.
    \end{itemize}

    \noindent Furthermore, if $\Gamma$ is 2-edge-connected, then $\exists!$  point $x$ fixed by $j$  whose preimages in $\tGa$ are not fixed by $\tj$, in which case $\tj(p)=\iota(p).$ 
    \begin{proof}    Let us contract all the bridges of $\Gamma$ and their preimages in $\tGa$, which may or may not be bridges, so that they are 2-edge-connected metric graphs. This does not affect whether $x$ is a cut vertex or any of the other conditions appearing in the statement. 
    The image of each bridge under the contraction is  a cut vertex. 
        \begin{itemize}
            \item [i)] Suppose that $x$ is a cut vertex. Then
            \begin{itemize}
                \item [$(\Leftarrow)$] If $p$ is a cut vertex, then it follows from \cite[Lemma 3.10]{CH} that $p$ is fixed by $\tj$;
                
                \item [$(\Rightarrow)$] If $p$ is a fixed point of $\tj$ then, from \cref{prop:valencyoffixedpoints} applied to both $p$ and $x$, we conclude that the valency of $p$ is at least $4$. Therefore, by applying \cref{prop:valencyoffixedpoints} to $p$ and $\iota p$, they are both  cut vertices.  
            \end{itemize}
            \item[ii)] Suppose now that $x$ is not a cut vertex (but still a fixed point of $j$). By  \cref{prop:valencyoffixedpoints}, its valency is 2 and therefore $val(p)=val(\iota(p))=2$.
            \begin{itemize}
                \item [$(\Leftarrow)$] If $p$ is not fixed by $\tj$ then, by \cref{rem:onlytwochoicesfortheinvolution}, the divisor $p+\iota(p)$ has rank $1$. 
                Therefore, for every $q\in \tGa$ we have $p+\iota(p)-q\simeq q'$. Applying Dhar's burning algorithm and chip-firing from the point $q$, the fire stops when reaching $p$ and $\iota(p)$ and the graph is not entirely burnt. Furthermore, since the valency of $p$ and  $\iota(p)$ is $2$, the graph can be decomposed into two parts,
meaning that $\{p,\iota(p)\}$ is a disconnecting pair;
                \item [$(\Rightarrow)$] If $p$ is fixed by $\tj$ then  $\iota(p)$ is fixed as well. Moreover, since the genus of $\tGa$ is at least $3$, the involution $\tj$ has at least $4$ distinct fixed points. By contradiction, suppose that $\{p,\iota(p)\}$ disconnects the graph in two parts $A$ and $B$, therefore since $2p$ has rank $1$, using Dhar's burning algorithm and the fact that $val(p)=2$,
                for every point $q\in A$ there exists a unique point $q'\in B$ such that $2p-q\simeq q'$ and $\tj(q)=q'$. This is absurd because there are no fixed points other than $p$ and $\iota(p)$.
            \end{itemize}
        \end{itemize}

                For the final part, 
since $\tGa$ has genus $2g-1$, its hyperelliptic involution $\tj$ has exactly $2g$ fixed points, while $j$ has $g+1$ such points. We note that if $\tj(p)=p$, then also  $\tj(\iota(p))=\iota(p)$, therefore  the $2g$ fixed points of $\tj$ map down to $g$ fixed points for $j$. This means that there exists a unique  point $x\in \Gamma$ that is fixed by $j$ such that $\tj(p)=\iota(p)$. For instance, in \cref{abel1} the point $x$ is precisely the image in $\Ga$ under $\pi$ of the mid points of the crossed edges.
    \end{proof}
\end{proposition}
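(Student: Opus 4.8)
The plan is to reduce both graphs to the 2-edge-connected setting and then play off the local valency data controlled by \cref{prop:valencyoffixedpoints} against the rank-theoretic description of the hyperelliptic involution via Dhar's burning algorithm. First I would contract every bridge of $\Gamma$ together with its preimages in $\tGa$: this operation changes neither whether a vertex is a cut vertex nor whether a pair of edges is disconnecting, and $\pi$ remains a free double cover afterwards, so we may assume $\Gamma$ and $\tGa$ are both 2-edge-connected. Two facts are then available throughout. Since $\pi$ is a free double cover it is a local isomorphism, whence $\val(p)=\val(\pi(p))$ for every $p$; and by \cref{commutative} the involutions $\iota$ and $\tj$ commute, so $\tj$ fixes $p$ if and only if it fixes $\iota(p)$.

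Part (i) is purely local and rests on valency counting. A cut vertex $x$ fixed by $j$ satisfies $\val(x)=2k$ with $k\ge 2$ by \cref{prop:valencyoffixedpoints}. For the implication $(\Rightarrow)$, if $p$ is fixed by $\tj$ then $\val(p)=\val(x)\ge 4$, and applying \cref{prop:valencyoffixedpoints} now to $p$ inside $\tGa$ forces $\tGa\setminus p$ to have at least two components, so $p$, and hence $\iota(p)$, is a cut vertex. For $(\Leftarrow)$ I would invoke \cite[Lemma 3.10]{CH}, which guarantees that a cut vertex of a hyperelliptic graph is fixed by the hyperelliptic involution.

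Part (ii) is where the real work lies, and I expect it to be the main obstacle. Here $x$ is not a cut vertex, so $\val(x)=2$ and thus $\val(p)=\val(\iota p)=2$. For the contrapositive of $(\Leftarrow)$, if $p$ is not fixed by $\tj$ then \cref{rem:onlytwochoicesfortheinvolution} gives $\tj p=\iota p$, so $p+\iota p=p+\tj p$ has rank $1$; burning from an arbitrary point, the fire must halt at $p$ and $\iota p$ without consuming the graph, and valency $2$ forces the unburnt region to be cut off by exactly the pair $\{p,\iota p\}$. For $(\Rightarrow)$, suppose $p$ is fixed while $\{p,\iota p\}$ disconnects $\tGa$ into $A$ and $B$. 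Then $2p\simeq p+\tj p$ is the hyperelliptic $g^1_2$, and the valency-$2$ burning argument produces for each $q\in A$ a unique partner $q'=\tj q\in B$; this makes $\tj$ interchange $A$ and $B$ with $p,\iota p$ as its only fixed points, contradicting the fact that $\tj$ has at least $4$ fixed points once $g_{\tGa}\ge 3$. The delicate point is justifying that the burning process stops precisely at $\{p,\iota p\}$ and that valency $2$ upgrades the rank statement to a genuine bipartition of the graph.

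Finally, for the uniqueness claim I would count fixed points. Since $\pi$ is a free double cover, $g_{\tGa}=2g_\Gamma-1$, so by the Riemann--Hurwitz count recalled in \cref{numberOfFixedPoints} the involution $\tj$ has exactly $2g_\Gamma$ fixed points, while $j$ has $g_\Gamma+1$. Because $\tj p=p$ forces $\tj\iota p=\iota p$, the fixed points of $\tj$ occur in $\iota$-pairs, each lying over a single fixed point of $j$; hence precisely $g_\Gamma$ of the $g_\Gamma+1$ fixed points of $j$ have both preimages fixed, leaving exactly one fixed point $x$ whose preimages are swapped, where necessarily $\tj p=\iota p$ by \cref{rem:onlytwochoicesfortheinvolution}.
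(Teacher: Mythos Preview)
Your proposal is correct and follows essentially the same route as the paper's own proof: the same bridge-contraction reduction, the same appeal to \cite[Lemma 3.10]{CH} and \cref{prop:valencyoffixedpoints} for part~(i), the same Dhar's-burning argument exploiting valency~$2$ for both directions of part~(ii), and the same fixed-point count for the uniqueness claim. You make explicit a couple of points the paper leaves implicit (that $\val(p)=\val(\pi(p))$ via the local isomorphism, and the role of \cref{commutative} in ensuring $\tj$-fixed points come in $\iota$-pairs), but the argument is otherwise identical.
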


\begin{remark}\label{rem:fixedpoints}
    In the case where $\Gamma$ is not 2-edge-connected then, rather than  a unique fixed point of $j$  whose preimages are not fixed, we have a unique subtree of $\Gamma$. Indeed, this can be seen by contracting all the bridges of $\Gamma$ and applying \cref{fixedpoints}. We give this tree a name. 
    \end{remark}

\begin{definition}\label{def:weaklyFixed}
    Let $\pi:\wt\Gamma\to\Gamma$ be a free double cover of hyperelliptic metric graphs and let $T_f$ be the unique maximal fixed tree of $j$ whose preimages are not fixed by $\tj$. We call $T_f$ the \emph{weakly-fixed subtree} (or weakly-fixed point if $T_f$ is a single point) of the involution.

\end{definition}

\subsection{Construction $\star$}
We now present a useful construction, that we call Construction $\star$, for free double covers of  hyperelliptic metric graphs. The construction is similar 
 to \cite[Construction A]{LZ22}, with slight variations that take advantage of the hyperelliptic structure of the graph. 
As we shall see,  \emph{every} double cover of a hyperelliptic graph can be obtained via the construction.  
Note that we are not claiming that all the double covers obtained from Construction $\star$ are hyperelliptic, but we will see in \cref{constrhypcovers}
how to identify the ones that are. 
We begin by describing the case where $\Ga$ is 2-connected and then extend to the general case.

\subsection{The $2$-connected case}\label{sec:2edgeconnectedcase}
Let $\Gamma$ be  a 2-connected hyperelliptic metric graph of genus $g\geq 1$  with involution $j$.
 As we saw in  \cref{sec:fixedpoints}, the quotient $\Ga/j$ is a tree with $g+1$ leaves corresponding to the fixed points of the involution. We may, therefore, express $\Gamma$ as a union of two isomorphic trees 
 $T$ and $T'$ attached at their 1-valent vertices  $v_0,\dots,v_{g}$ and   $v'_0,\dots,v'_{g}$ respectively. Those attachment points correspond exactly to the fixed points of $j$. 
 
Now, any double cover of $\Gamma$ must contain two isomorphic copies of $T$ and two isomorphic copies of $T'$, which we denote  $\wt T^{\pm}$ and $\wt T'^{\pm}$ respectively. 

Since the graph $\Gamma$ is obtained by attaching each  $v_i$ to $v_i'$, the graph $\wt\Gamma$ is obtained by attaching each lift $\wt v_i^+$ of $v_i$ to either of the lifts $\wt v_i'^+$ or  $\wt v_i'^-$ of $v_i'$, and vice versa for the other lift $\wt v_i^-$ of $v_i$. These choices of gluing  are exactly what distinguishes the various double covers of $\Gamma$.

As there are $g+1$ fixed points $v_i$, we obtain $2^{g+1}$ double covers. However, there is some repetition here: as we will see in \cref{analogous},
swapping the gluing for all the vertices $\wt v_i'^+$ at the same time results in the same double cover, leaving us with $2^g$ distinct double covers.  
 See Figure \ref{fig1} for an example of the construction. 
 
For later use, denote 
\begin{equation}
   F=\{v_0,\ldots, v_g\}
\end{equation}
the set of vertices fixed by $j$ and by $S$ its subset whose preimage in $\wt\Gamma$ traverses either between $\wt T^{+}$ and $\wt T'^{-}$ or between $\wt T^{-}$ and $\wt T'^{+}$. Note that, since $\wt\Gamma$ is assumed to be connected, we can assume that $S$ is neither $\emptyset$ nor $F$.

\begin{example}
In \cref{fig1}, we consider a hyperelliptic metric graph $\Ga$ of genus $g\geq 2$ and draw the two trees corresponding to the quotient given by the involution whose fixed points are $v_0,\dots,v_g$. 

\begin{figure}[ht]
\centering
\begin{tikzpicture}
       \coordinate (1) at (-3,0.5);
	\coordinate (2) at (-2,0.5);
	\coordinate (3) at (-1.5,0.5);
	\coordinate (4) at (-0.5,0.5);
         \coordinate (5) at (0.5,0.5);
        \coordinate (6) at (1.5,0.5);
        \coordinate (7) at (2,0.5);
	\coordinate (8) at (3,0.5);
	\coordinate (9) at (-2.5,1);
	\coordinate (10) at (-1,1);
	\coordinate (11) at (1,1);
        \coordinate (12) at (2.5,1);
        \coordinate (13) at (-1.75,1.5);
	\coordinate (14) at (1.75,1.5);
        \coordinate (15) at (0,2.5); 
        \coordinate (1i) at (-3,0.5);
	\coordinate (2i) at (-2,0.5);
	\coordinate (3i) at (-1.5,0.5);
	\coordinate (4i) at (-0.5,0.5);
         \coordinate (5i) at (0.5,0.5);
        \coordinate (6i) at (1.5,0.5);
        \coordinate (7i) at (2,0.5);
	\coordinate (8i) at (3,0.5);
	\coordinate (9i) at (-2.5,-1+1);
	\coordinate (10i) at (-1,0);
	\coordinate (11i) at (1,0);
        \coordinate (12i) at (2.5,0);
        \coordinate (13i) at (-1.75,-0.5);
	\coordinate (14i) at (1.75,-0.5);
        \coordinate (15i) at (0,-1.5); 
        \coordinate (a1) at (-2.5,9.5);
	\coordinate (a2) at (-2,0.5+6);
	\coordinate (a3) at (-1.5,0.5+6);
	\coordinate (a4) at (-0.5,0.5+6);
         \coordinate (a5) at (0.5,0.5+6);
        \coordinate (a6) at (1.5,0.5+6);
        \coordinate (a7) at (2,0.5+6);
	\coordinate (a8) at (3,0.5+6);
	\coordinate (a9) at (-2.5,1+6);
	\coordinate (a10) at (-1,1+6);
	\coordinate (a11) at (1,1+6);
        \coordinate (a12) at (2.5,1+6);
        \coordinate (a13) at (-1.75,1.5+6);
	\coordinate (a14) at (1.75,1.5+6);
        \coordinate (a15) at (0,2.5+6); 
	\coordinate (a2i) at (-2,-0.5+6+1);
	\coordinate (a3i) at (-1.5,-0.5+6+1);
	\coordinate (a4i) at (-0.5,-0.5+6+1);
         \coordinate (a5i) at (0.5,-0.5+6+1);
        \coordinate (a6i) at (1.5,-0.5+6+1);
        \coordinate (a7i) at (2,-0.5+6+1);
	\coordinate (a8i) at (3,-0.5+6+1);
	\coordinate (a9i) at (-2.5,-1+6+1);
	\coordinate (a10i) at (-1,-1+6+1);
	\coordinate (a11i) at (1,-1+6+1);
        \coordinate (a12i) at (2.5,6);
        \coordinate (a13i) at (-1.75,-1.5+6+1);
	\coordinate (a14i) at (1.75,-1.5+6+1);
        \coordinate (a15i) at (0,-2.5+6+1); 
            \coordinate (b1) at (-3.2,9.5);
	\coordinate (b2) at (-2,0.5+12);
	\coordinate (b3) at (-1.5,0.5+12);
	\coordinate (b4) at (-0.5,0.5+12);
         \coordinate (b5) at (0.5,0.5+12);
        \coordinate (b6) at (1.5,0.5+12);
        \coordinate (b7) at (2,0.5+12);
	\coordinate (b8) at (3,0.5+12);
	\coordinate (b9) at (-2.5,13);
	\coordinate (b10) at (-1,13);
	\coordinate (b11) at (1,13);
        \coordinate (b12) at (2.5,1+12);
       \coordinate (b13) at (-1.75,1.5+12);
	\coordinate (b14) at (1.75,1.5+12);
        \coordinate (b15) at (0,2.5+12); 
	\coordinate (b2i) at (-2,-0.5+13);
\coordinate (b3i) at (-1.5,-0.5+13);
\coordinate (b4i) at (-0.5,-0.5+13);
        \coordinate (b5i) at (0.5,-0.5+13);
        \coordinate (b6i) at (1.5,-0.5+13);
        \coordinate (b7i) at (2,-0.5+13);
	\coordinate (b8i) at (3,-0.5+13);
	\coordinate (b9i) at (-2.5,12);
	\coordinate (b10i) at (-1,12);
	\coordinate (b11i) at (1,12);
        \coordinate (b12i) at (2.5,12);
    \coordinate (b13i) at (-1.75,-1.5+13);
    \coordinate (b14i) at (1.75,-1.5+13);
        \coordinate (b15i) at (0,-2.5+13); 
 \foreach \i in {1,2,3,4,5,6,7,8,9,10,11,12,13,14,15,1i,2i,3i,4i,5i,6i,7i,8i,9i,10i,11i,12i,13i,14i,15i,a1,a2,a3,a4,a5,a6,a7,a8,a9,a10,a11,a12,a13,a14,a15,a2i,a3i,a4i,a5i,a6i,a7i,a8i,a9i,a10i,a11i,a12i,a13i,a14i,a15i,b1,b2,b3,b4,b5,b6,b7,b8,b9,b10,b11,b12,b13,b14,b15,b2i,b3i,b4i,b5i,b6i,b7i,b8i,b9i,b10i,b11i,b12i,b13i,b14i,b15i,b2i,b3i,b4i,b5i,b6i,b7i,b8i,b9i,b10i,b11i,b12i,b13i,b14i,b15i}
	\draw[fill=black](\i) circle (0.15em);

\draw[thin, black] (1)--(9);
\draw[thin, black] (13)--(9);
\draw[thin, black] (2)--(9);
\draw[thin, black] (10)--(3);
\draw[thin, black] (10)--(4);
\draw[thin, black] (10)--(13);
\draw[thin, black] (5)--(11);
\draw[thin, black] (11)--(6);
\draw[thin, black] (12)--(7);
\draw[thin, black] (12)--(8);
\draw[thin, black] (14)--(11);
\draw[thin, black] (14)--(12);
\draw[thin, black] (14)--(15);
\draw[thin, black] (13)--(15);
\draw[thin, black] (1i)--(9i);
\draw[thin, black] (13i)--(9i);
\draw[thin, black] (2i)--(9i);
\draw[thin, black] (10i)--(3i);
\draw[thin, black] (10i)--(4i);
\draw[thin, black] (10i)--(13i);
\draw[thin, black] (5i)--(11i);
\draw[thin, black] (11i)--(6i);
\draw[thin, black] (12i)--(7i);
\draw[thin, black] (12i)--(8i);
\draw[thin, black] (14i)--(11i);
\draw[thin, black] (14i)--(12i);
\draw[thin, black] (14i)--(15i);
\draw[thin, black] (13i)--(15i);
\draw[thin, red] (1)--(1i);
\draw[thin, red] (2)--(2i);
\draw[thin, red] (3)--(3i);
\draw[thin, red] (4)--(4i);
\draw[thin, red] (5)--(5i);
\draw[thin, red] (6)--(6i);
\draw[thin, red] (7)--(7i);
\draw[thin, red] (8)--(8i);

\draw[thin, black] (a1)--(a9);
\draw[thin, black] (a13)--(a9);
\draw[thin, black] (a2)--(a9);
\draw[thin, black] (a10)--(a3);
\draw[thin, black] (a10)--(a4);
\draw[thin, black] (a10)--(a13);
\draw[thin, black] (a5)--(a11);
\draw[thin, black] (a11)--(a6);
\draw[thin, black] (a12)--(a7);
\draw[thin, black] (a12)--(a8);
\draw[thin, black] (a14)--(a11);
\draw[thin, black] (a14)--(a12);
\draw[thin, black] (a14)--(a15);
\draw[thin, black] (a13)--(a15);
\draw[thin, black] (a13i)--(a9i);
\draw[thin, black] (a2i)--(a9i);

\draw[thin, black] (a10i)--(a3i);
\draw[thin, black] (a10i)--(a4i);
\draw[thin, black] (a10i)--(a13i);
\draw[thin, black] (a5i)--(a11i);
\draw[thin, black] (a11i)--(a6i);
\draw[thin, black] (a12i)--(a7i);
\draw[thin, black] (a12i)--(a8i);
\draw[thin, black] (a14i)--(a11i);
\draw[thin, black] (a14i)--(a12i);
\draw[thin, black] (a14i)--(a15i);
\draw[thin, black] (a13i)--(a15i);
\draw[thin, red] (a2)--(a2i);
\draw[thin, red] (a3)--(a3i);
\draw[thin, red] (a4)--(a4i);
\draw[thin, red] (a5)--(a5i);
\draw[thin, red] (a6)--(a6i);
\draw[thin, red] (a7)--(a7i);
\draw[thin, red] (a8)--(a8i);

\draw[thin, black] (b13)--(b9);
\draw[thin, black] (b2)--(b9);
\draw[thin, black] (b10)--(b3);
\draw[thin, black] (b10)--(b4);
\draw[thin, black] (b10)--(b13);
\draw[thin, black] (b5)--(b11);
\draw[thin, black] (b11)--(b6);
\draw[thin, black] (b12)--(b7);
\draw[thin, black] (b12)--(b8);
\draw[thin, black] (b14)--(b11);
\draw[thin, black] (b14)--(b12);
\draw[thin, black] (b14)--(b15);
\draw[thin, black] (b13)--(b15);

\draw[thin, black] (b13i)--(b9i);
\draw[thin, black] (b2i)--(b9i);
\draw[thin, black] (b10i)--(b3i);
\draw[thin, black] (b10i)--(b4i);
\draw[thin, black] (b10i)--(b13i);
\draw[thin, black] (b5i)--(b11i);
\draw[thin, black] (b11i)--(b6i);
\draw[thin, black] (b12i)--(b7i);
\draw[thin, black] (b12i)--(b8i);
\draw[thin, black] (b14i)--(b11i);
\draw[thin, black] (b14i)--(b12i);
\draw[thin, black] (b14i)--(b15i);
\draw[thin, black] (b13i)--(b15i);

\draw[thin, black] (a1)--(b9i);
\draw[thin, red] (b3)--(b3i);
\draw[thin, red] (b4)--(b4i);
\draw[thin, red] (b5)--(b5i);
\draw[thin, red] (b6)--(b6i);
\draw[thin, red] (b7)--(b7i);
\draw[thin, red] (b8)--(b8i);

  \draw (8) node [right ] {$v_{g}$};
  \draw (1) node [right] {$v_0$};

    \draw (a1) node [right] {$\widetilde{v}^-_0$};   
     \draw (b1) node [right] {$\widetilde{v}^+_0$};
    \draw (a8) node [right ] {$\widetilde v^-_{g}$};
    \draw (b8) node [right ] {$\widetilde v^+_{g}$};

 \draw (0,0.4) node [above] {$\dots$};
  \draw (0,-0.4) node [below] {$\dots$};
\draw (0,1.4) node [above] {$\dots$};
  \draw (0,0.4+6) node [above] {$\dots$};
  \draw (0,-0.4+6) node [below] {$\dots$};
\draw (0,1.4+6) node [above] {$\dots$};
\draw (0,0.4+6+6) node [above] {$\dots$};
  \draw (0,-0.4+6+6) node [below] {$\dots$};
\draw (0,13.4) node [above] {$\dots$};
   \draw (0,3.2) node [above] {$\downarrow$};
   \draw[thin, black] (b9) to [out=180+65, in=90] (b1);
   \draw[thin, black] (b1) to [out=180+90, in=90+25] (a9i);
   \draw (2.4,8) node [above] {$\widetilde T^-$};
   \draw (2.4,2) node [above] {$T$};
   \draw (2.4,14) node [above] {$\widetilde T^+$};
 \draw (2.4,-1.5) node [above] {$T'$};
 \draw (2.4,4.5) node [above] {$\widetilde T^{'-}$};
 \draw (2.4,10.5) node [above] {$\widetilde T^{'+}$};
   \end{tikzpicture}
   
 \caption[below]{The Construction $\star$ of the double cover when $S=\{v_0\}$. }
  \label{fig1}
 \end{figure}
\end{example}

\subsection{The general case with cut vertices and bridges}\label{generalcase}

Let $\Gamma$ be a hyperelliptic graph of genus $g\geq 2$.
Denote by $\Gamma_0,\dots,\Gamma_{n}$ the $2$-connected components of genera $g_0, g_1,\ldots,g_n$ respectively. Note that every cut vertex of $\Gamma$  corresponds to distinct vertices in the various 2-connected components that contain it. 

As the hyperelliptic involution fixes  any bridge or cut vertex point-wise, each subgraph $\Ga_i$ is hyperelliptic as well with $g_i+1$ fixed points. 
Therefore,  the total number of  fixed points on the \emph{disjoint} union of the $\Gamma_i$s is: \[\sum_{i=0}^{n}(g_{i}+1)=g+1+n.\]

Applying Construction $\star$ to each connected component, we can write 
 $\Gamma_i = T_i\cup T_i'$ for each $i$, where  $T_i$ and $T_i'$ are isomorphic trees, and $\wt\Gamma_i = \wt T_i^{\pm}\cup\wt T_i'^{\pm}$.  
 For each $i$, denote $F(\Gamma_i)$  the set of vertices  connecting the trees $T_i$ and $T'_i$.
 As before, for each $F(\Gamma_i)$ we denote $S_i$ the set of vertices  traversing between $\wt T_i^{\pm}$ and $\wt T_i'^{\mp}$.
Now take  $S=\sqcup_i S_i$ and  $F=\sqcup_i\, F(\Ga_i)$ . Note that some vertices of $\Gamma$ may appear in more than one $\Gamma_i$ and we  distinguish them in $S$ and in $F$. 
  
 We are left with lifting  the bridges of $\Gamma$.  Different lifts of a bridge result in an isomorphic double cover, so we might as well assume that, when a bridge $b$ connects vertices $u$ and $v$ of $\Gamma$ whose preimages in $\wt\Gamma$ are $u^{\pm},v^{\pm}$,
 one of its preimages  $\wt b^{+}$ connects  $\tilde u^+$ with $\tilde v^+$ and the other preimage $\wt b^{-}$ connects  $\tilde u^-$ with $\tilde v^-$.

In conclusion, since there are $g+1+n$ fixed points, we end up with  $2^{g+1+n}$ double covers. As before, there is repetition and some of those are isomorphic double covers.

\begin{example}
\cref{fig:constructionbridges} shows 
    the construction of a double cover of a
    hyperelliptic metric graph with bridges. We first use the Construction $\star$ on each 2-edge-connected component and then lift the bridges. For visualisation, we use red segments to indicate which vertices are attached to each other, they can be thought of as edges of length 0. 
    In this example we choose three pairs of edges connecting $\widetilde{T}^+$ and $\widetilde{T}^{'-}$. Note that the cover graph is not hyperelliptic, although we started with a hyperelliptic one.
\begin{figure}[ht]
    \centering
\input{pic2}
 \caption{ The construction in case of bridges where $|S|=3$.}
    \label{fig:constructionbridges}
\end{figure}

\end{example}

\subsection{Counting the number of distinct free double covers of hyperelliptic graphs}
From the total number of $2^{g+1+n}$ free double covers of a hyperelliptic graph $\Ga$ with $n$ bridges constructed above, some  are isomorphic and many are not hyperelliptic. 

Let $I:=\{0,\dots,n\}$ be the labelling of the $2$-connected  components of $\Gamma$ (namely, the components that remain after removing bridges and separating  cut-vertices)  and let $S=\sqcup_{i\in I} S_i$, where, as before, $S_i$ is the subset of vertices swapped between $\widetilde{T}_i^{\pm}$ and $\widetilde{T}_i^{'\mp}$ (where vertices appearing in two components are counted twice). We denote by $S^c_i$ the complement of the vertices of $S_i$ in the connected component $\Ga_i$. 
For a subset $J$ of $I$, we denote $S^c_J$ the set obtained from $S$ by swapping each $S_j$ for  $j\in J$ with its complement, namely $S^c_J = \sqcup_{j\in J}S^c_j\sqcup_{j\in I\setminus J}S_j$.

\begin{proposition}\label{analogous}
    The free double covers associated to
      $S$ and $S^c_J$ are isomorphic for any subset $J\subset I$. Conversely, if the double covers associated with $S$ and $S'$ are isomorphic then  $S' = S_J^c$, for some  subset  $J\subset I$.

    \begin{proof}
    
        Let $\pi_{S}\colon \tGa_S\to \Ga$ and $\pi_{S_J^c}\colon \tGa_{S_J^c}\to \Ga$ be the free double covers associated to $S$ and $S_J^c$ respectively. 
        We wish to construct an isomorphism $\alpha\colon  \tGa_S\to \tGa_{S_J^c}$ such that the following diagram commutes: 
        \[
\begin{tikzcd}
\tGa_S \arrow[rr, "\alpha"] \arrow[rd, "\pi_S"] &      & \tGa_{S_J^c} \arrow[ld, "\pi_{S_J^c}"] \\
                              & \Ga&                
\end{tikzcd}.
        \]

 Let $\Gamma_i$ be a 2-connected component of $\Gamma$ such that $i\in J$ and,  as usual, let $T_i,T_i'$ be the two  isomorphic trees whose union is $\Gamma_i$. Denote by $\wt\Gamma_{i,S}$ and $\wt\Gamma_{i,S_J^c}$ the subgraphs of $\wt\Gamma_S$ and $\wt\Gamma_{S_J^c}$ respectively corresponding to $\Gamma_i$, namely
 $\wt\Gamma_{i,S} = \pi_{S}^{-1}(\Gamma_i), \wt\Gamma_{i,S_J^c} = \pi_{S_J^c}^{-1}(\Gamma_i)$.

        By a slight abuse of notation, we use the  symbols $\wt T_i^{\pm}$ and $\wt T_i'^{\pm}$ to refer to the preimages of the trees $T_i, T_i'$
        in both
        $\wt\Gamma_{i,S}$ and $\wt\Gamma_{i,S_J^c}$ and use the same notation for the fixed and swapped vertices.  
    
        We define $\alpha_i:\wt\Gamma_{i,S}\to\wt\Gamma_{i,S_J^c}$  to be the morphism of graphs that maps the trees $\wt T_i^+$ and $\wt T_i'^+$ of $\wt\Gamma_{i,S}$ to the same trees $\wt T_i^+$ and $\wt T_i'^+$ of $\wt\Gamma_{i,S_J^c}$, but 
        swaps $\wt T_i^-$ and $\wt T_i'^-$.
        Since a vertex $\wt v_i^+$ of $T_i^+$ is glued to $\wt v_i^+$ in $\wt\Gamma_{i,S}$ if and only if 
        the corresponding vertex  is glued to $\wt v_i^-$ in  $\tGa_{S_J^c}$, this map is continuous. 
The map $\alpha$ on the entire graph is now obtained      by repeating this construction for each $i\in J$ and by attaching the preimages of the bridges accordingly.

Conversely, suppose that the double covers $\pi:\wt\Gamma\to\Gamma$ and $\pi':\wt\Gamma'\to\Gamma'$ corresponding to $S$ and $S'$ are isomorphic.
 
We may assume that both $S$ and $S'$ contain some vertex $v_0$, otherwise replace $S$ with its complement.  Now,  every vertex $v\in F$ is  in $S$ if and only if there is a simple cycle in $\wt\Gamma$ that passes through both  preimages of $v$ and the preimages of $v_0$.  An analogous property holds for $\pi'$. In particular, a vertex $v$ is in $S$ if and only if it is in $S'$. 
Therefore, there exists $J\subset I$ such that $S'=S^c_{J}$.

    \end{proof}
\end{proposition}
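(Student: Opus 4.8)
The plan is to prove the two assertions separately, unified by one observation: since every isomorphism of double covers here lies over the identity of $\Ga$, when restricted to the preimage of one of the contractible trees $T_i$ or $T_i'$ it must act either as the identity or as the interchange of the two sheets. Everything then reduces to recording these sheet-interchanges, reading off their effect on the gluing data at the fixed points, and checking compatibility along the bridges.

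For the forward direction I would build the isomorphism $\alpha\colon\tGa_S\to\tGa_{S_J^c}$ explicitly and component by component. For each $i\in J$, define $\alpha_i$ to fix the two lifts $\wt T_i^{+},\wt T_i^{-}$ of $T_i$ and to interchange the two lifts of $T_i'$; for $i\notin J$ take $\alpha_i$ to be the identity. The key verification is that this relabeling turns $S_i$ into $S_i^c$: at a fixed point $v\in F(\Ga_i)$ the lift $\wt v^+$ is attached to the lift of $v$ sitting in $\wt T_i'^{+}$ or in $\wt T_i'^{-}$ exactly according to whether $v\notin S_i$ or $v\in S_i$, and interchanging the two copies of $T_i'$ reverses this choice for every $v$ simultaneously. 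Because $\alpha_i$ merely permutes sheets it commutes with $\pi$, and since the two lifts $\wt b^{+},\wt b^{-}$ of each bridge can be relabeled freely, the local maps glue to a global isomorphism once the bridge lifts are reattached accordingly.

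For the converse, suppose $\beta\colon\tGa_S\to\tGa_{S'}$ is an isomorphism over $\Ga$ and restrict it to each $2$-connected piece, obtaining $\beta_i$ over $\Ga_i$. As $\pi^{-1}(T_i)=\wt T_i^{+}\sqcup\wt T_i^{-}$ is a trivial (disconnected) cover of the contractible tree $T_i$, and similarly for $T_i'$, the map $\beta_i$ either preserves or swaps the two sheets over $T_i$, say by $a_i\in\ZZ/2$, and over $T_i'$, say by $b_i\in\ZZ/2$. Encoding the gluing at a fixed point $v$ by $\chi_{S_i}(v)\in\ZZ/2$, equal to $1$ precisely when $v\in S_i$, compatibility of $\beta_i$ with the two gluings forces, for every $v\in F(\Ga_i)$, the relation $\chi_{S'_i}(v)=\chi_{S_i}(v)\oplus(a_i\oplus b_i)$. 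The correction $a_i\oplus b_i$ does not depend on $v$, so on each component $S'_i$ equals $S_i$ or $S_i^c$; setting $J=\{\,i:a_i\oplus b_i=1\,\}$ yields $S'=S_J^c$.

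The main obstacle I anticipate is controlling the interaction \emph{between} components rather than the within-component analysis. In the forward direction one must check that the component-wise sheet-interchanges extend to a single globally continuous isomorphism across the bridges and at the cut vertices shared by several $\Ga_i$; in the converse one must verify that the bridges impose no further linkage among the constants $a_i\oplus b_i$, so that every subset $J$ genuinely arises and the local count is not over-constrained. Both points rest on the same structural input built into the construction, namely that the two lifts of a bridge may be interchanged without altering the isomorphism type, so the bridge-gluings carry no extra data and the complementations on distinct components are truly independent.
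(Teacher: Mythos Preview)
Your proof is correct. The forward direction matches the paper's approach: both build the isomorphism by interchanging sheets over one of the two trees in each component $\Ga_i$ with $i\in J$ (the paper's phrasing ``swaps $\wt T_i^-$ and $\wt T_i'^-$'' is slightly garbled, but the intended map is exactly your sheet-interchange over $T_i'$), and both note that bridge lifts can be reattached freely.

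Your converse, however, takes a genuinely different route. The paper argues via a topological invariant: after normalising so that a fixed reference vertex $v_0$ lies in both $S$ and $S'$, it observes that $v\in S$ is detected by the existence of a simple cycle in $\tGa$ passing through both preimages of $v$ and both preimages of $v_0$, a property preserved under isomorphism over $\Ga$. You instead decompose the isomorphism $\beta$ directly: over each contractible tree $T_i$ (respectively $T_i'$) the map must act as a deck transformation, recorded by $a_i$ (respectively $b_i$) in $\ZZ/2$, and compatibility at each gluing vertex forces $\chi_{S'_i}=\chi_{S_i}\oplus(a_i\oplus b_i)$. Your argument is more elementary and makes the per-component independence transparent; the paper's cycle criterion is more intrinsic and gives a concrete invariant distinguishing the covers, but its write-up is terser and leaves the per-component normalisation implicit. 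Either approach suffices, and your anticipated ``main obstacle'' about bridges is handled correctly in both: the construction explicitly declares bridge lifts to carry no data, so no further constraints link the $a_i\oplus b_i$ across components.
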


\begin{remark}
     From \cref{analogous},  it follows that the number of  free double covers up to isomorphism obtained from Construction $\star$ is $2^g$. On the other hand, it is known that the total number of double covers of a graph of genus $g$ is also $2^g$ (see for instance \cite[Construction A,B]{LZ22}, \cite[Section 5]{JensenLen_thetachars}, or \cite{Waller_DoubleCoversOfGraphs}). It follows that every double cover of a hyperelliptic graph can obtained via  Construction $\star$.
\end{remark}

As mentioned above, not all of the double covers produced via Construction $\star$  are hyperelliptic. 
The following proposition gives a sufficient and necessary condition for $\tGa$ to be hyperelliptic as well.

\begin{proposition}\label{constrhypcovers}
     Let $\pi\colon \tGa\to \Gamma$ be a free double cover of metric graphs such that $\Gamma$ has  genus $g_{\Ga}\geq 2$ and no points of valency $1$. 
Then $\tGa$ is hyperelliptic if and only if there is a tree $T_f\subseteq\Gamma$ (possibly consisting of a single point) fixed by $j$ such that $S$ or $S^c_J$ consists precisely of the vertices of $F$ that are also leaves of $T_f$.

     \begin{proof} 

Suppose that $\wt\Gamma$ is hyperelliptic with involution $\tj$. By   \cref{rem:fixedpoints}, there is a unique tree $T_f\subset\Gamma$ fixed by $j$ whose preimages are swapped by $\tj$. Let   $S'$ be the subset of $F$ consisting  of the leaves of $T_f$. Then it is straightforward to check that  
the double cover obtained by applying Construction $\star$ to $S'$ is isomorphic to $\pi$. Using \cref{analogous}, we may assume that $S=S'$. 

Conversely, suppose that $S$ consists of the vertices of $F$ adjacent to the tree $T_f$. Define an involution $\tj$ on $\wt\Gamma$ whose restriction to each of the trees $T_i^{\pm}$ coincides with the corresponding involution of $\Gamma$, fixes the vertices in the complement of $S$ and swaps the vertices in $S$. Then it is straightforward to check that $\tj$ is a hyperelliptic involution.

     \end{proof}
\end{proposition}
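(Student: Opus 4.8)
The plan is to use the combinatorial criterion for hyperellipticity recorded in the quoted \cite[Theorem 3.13]{CH}: since $\wt\Gamma$ is a free double cover of a graph with no valency-$1$ points it has no leaf edges, so it suffices to produce (resp.\ to analyse) an involution $\tj$ of $\wt\Gamma$ whose quotient is a tree. Both directions are carried out in the language of Construction $\star$, in which $S\subseteq F$ records exactly which fixed points of $j$ are glued with a crossed identification.

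For the forward implication I would start from a hyperelliptic $\wt\Gamma$ with involution $\tj$. By \cref{commutative} the map $\tj$ commutes with $\pi$ and descends to $j$, and by \cref{rem:fixedpoints} (see \cref{def:weaklyFixed}) there is a unique weakly-fixed subtree $T_f\subseteq\Gamma$, namely the maximal $j$-fixed tree whose two preimages are interchanged by $\tj$. The task is then to match the Construction $\star$ data: I would check that a fixed point $v\in F$ is glued in crossed fashion, i.e.\ $v\in S$, exactly when $v$ is a leaf of $T_f$. Over the interior of $T_f$ the involution $\tj$ swaps the two sheets, over the rest of the $j$-fixed locus it fixes them, and a leaf of $T_f$ is precisely a point of $F$ at which these two regimes meet; this forces the crossed gluing there and the straight gluing at the remaining points of $F$. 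Setting $S' = \{v\in F : v \text{ a leaf of } T_f\}$, the cover produced by Construction $\star$ from $S'$ therefore agrees with $\pi$, and \cref{analogous} upgrades this to $S = S'$ or $S = (S')^c_J$ for a suitable $J\subseteq I$.

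For the converse I am handed a $j$-fixed tree $T_f$ with $S$ equal to its set of $F$-leaves, and I must build the involution. I would define $\tj$ to interchange the two preimages lying over all of $T_f$ (bridges and cut vertices included) and to fix the preimages over every other $j$-fixed point, acting on each pair of trees $\wt T_i^{\pm},\wt T_i'^{\pm}$ as the lift of $j|_{T_i}$. The gluing rules of Construction $\star$ then make $\tj$ a continuous involution: over $v\notin S$ both sheets are fixed, over $v\in S$ they are swapped (so $\tj=\iota$ there), and because every bridge of $T_f$ has both preimages swapped while every bridge outside $T_f$ has both preimages fixed, no inconsistent ``mixed'' bridge arises. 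The crux is then to see that $\wt\Gamma/\tj$ is a tree: over each $2$-connected component the local quotient is a tree (a suitably ramified double cover of the tree $T_i=\Gamma_i/j$), the two preimages of $T_f$ fold onto a single contractible piece, and these are assembled along the images of the bridges. Connectedness and acyclicity of the result follow because $T_f$ is itself a single connected tree, so its preimages are identified along one contractible subgraph rather than along several separate points that would create cycles.

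The main obstacle is this final tree-quotient verification in the converse. The delicate bookkeeping occurs at bridges and at cut vertices shared by several $2$-connected components, where a point of $F$ can appear more than once and its crossed-versus-straight status must be tracked coherently; the structural fact that makes every identification cohere into a genus-zero graph is precisely that the swapped fixed points are the leaves of one connected subtree $T_f$. A Riemann--Hurwitz count is available as a sanity check, but since $\tj$ fixes entire bridges pointwise the naive ramification count is unreliable, so I would instead rely on the direct description of the quotient above.
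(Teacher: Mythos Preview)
Your proposal is correct and follows essentially the same approach as the paper's proof: in the forward direction you invoke \cref{rem:fixedpoints} to extract $T_f$ and then \cref{analogous} to match $S$ with the leaf set of $T_f$; in the converse you define $\tj$ as the lift of $j$ that swaps sheets over $T_f$ and fixes them elsewhere, then verify the quotient is a tree. The paper's argument is the same but terser, declaring both the matching in the forward direction and the tree-quotient check in the converse to be ``straightforward''; your expanded treatment of the continuity at bridges and of the acyclicity of $\wt\Gamma/\tj$ simply fills in what the paper leaves implicit.
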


As an immediate corollary, we can count the number of free double covers of a hyperelliptic graph. 

\begin{corollary}\label{cor:numberOfHyperelliptic}
   Let $\Gamma$ be a hyperelliptic metric graph. Then, up to isomorphism, it has $g+1$ free double covers by a hyperelliptic metric graph.
\end{corollary}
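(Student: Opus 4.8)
The plan is to establish a bijection between the isomorphism classes of hyperelliptic free double covers of $\Gamma$ and the connected components of the fixed locus $\mathrm{Fix}(j)\subseteq\Gamma$ of the hyperelliptic involution, and then to count the latter by an Euler-characteristic argument. The corollary will then follow by showing that $\mathrm{Fix}(j)$ has exactly $g+1$ components.

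First I would make the correspondence precise. By \cref{def:weaklyFixed} and \cref{rem:fixedpoints}, every hyperelliptic free double cover $\pi$ determines a unique weakly-fixed subtree $T_f\subseteq\Gamma$, namely the maximal subtree fixed by $j$ whose two lifts are exchanged by $\tj$. Since $\pi$ is free and each component $K$ of $\mathrm{Fix}(j)$ is a tree, the preimage $\pi^{-1}(K)$ splits as two disjoint copies $K^{\pm}$ exchanged by $\iota$; by continuity of $\tj$ these are either exchanged throughout $K$ or fixed throughout $K$, so $T_f$ is exactly the single component of $\mathrm{Fix}(j)$ that is exchanged (unique by the final part of \cref{fixedpoints}). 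Conversely, to a component $K$ I would attach the cover $\Phi(K)$ produced by the converse direction of \cref{constrhypcovers}, taking $S$ to be the set of leaves of $K$ lying in $F$ and letting $\tj$ exchange $K^{\pm}$. Here $S\neq\emptyset$ because each leaf of $K$ is a ramification point and hence lies in $F$, and $S\neq F$ because there is more than one component once $g\geq 1$, so $K$ cannot contain every point of $F$; thus $\Phi(K)$ is a genuine connected hyperelliptic cover whose weakly-fixed subtree is $K$. The map $\Phi$ is surjective by the forward direction of \cref{constrhypcovers} and injective because the weakly-fixed subtree is an invariant of the isomorphism class of a cover over $\mathrm{Id}_\Gamma$; in particular the complement identifications $S\sim S^c_J$ of \cref{analogous} never merge two distinct components. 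Hence the number of hyperelliptic covers equals the number of components of $\mathrm{Fix}(j)$.

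It then remains to count these components. The quotient $q\colon\Gamma\to T:=\Gamma/j$ is a degree-$2$ harmonic morphism onto a tree, and $q$ restricts to a homeomorphism from $\mathrm{Fix}(j)$ onto the branch locus $B\subseteq T$; as $B$ is a subforest of the tree $T$, its number of components equals $\chi(B)$. Choosing a model of $T$ in which $B$ is a subcomplex and counting preimages cell by cell (one cell over $B$, two over $T\setminus B$) gives
\[
\chi(\Gamma)=2\chi(T)-\chi(B).
\]
Since $\chi(T)=1$ and $g=1-\chi(\Gamma)$, this rearranges to $g=\chi(B)-1$, so $B$, and therefore $\mathrm{Fix}(j)$, has exactly $g+1$ connected components. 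This refines the Riemann--Hurwitz count \eqref{numberOfFixedPoints} of \cref{sec:fixedpoints}, which in the $2$-edge-connected case already yields $g+1$ isolated fixed points. Combined with the bijection above, this gives the claimed $g+1$ hyperelliptic covers.

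The main obstacle I expect is the bookkeeping in the first step: verifying that the weakly-fixed subtree is always a \emph{full} connected component of $\mathrm{Fix}(j)$ rather than a proper subtree, and that the identifications of \cref{analogous} do not collapse distinct components to the same isomorphism class. Both points reduce to the uniqueness statement in \cref{fixedpoints}, but some care is needed in the presence of cut vertices and bridges, where a component of $\mathrm{Fix}(j)$ may be a nondegenerate fixed segment instead of a single point, so that the leaves contributing to $S$ must be identified correctly.
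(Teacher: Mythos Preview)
Your approach is correct and reaches the same conclusion, but it differs from the paper's proof in a meaningful way. The paper's argument is shorter: it first observes that contracting bridges does not change the count of hyperelliptic double covers, reduces to the $2$-edge-connected case where the weakly-fixed tree is a single point, and then invokes \cref{analogous} and \cref{constrhypcovers} together with the Riemann--Hurwitz count of $g+1$ isolated fixed points from \cref{sec:fixedpoints}. In other words, the paper sidesteps the bookkeeping you flag as an obstacle by collapsing the fixed subtrees to points before counting.

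Your route instead works directly on $\Gamma$ without contracting bridges: you set up a bijection between hyperelliptic covers and connected components of $\mathrm{Fix}(j)$, and then count those components via the Euler-characteristic identity $\chi(\Gamma)=2\chi(T)-\chi(B)$. This is a genuinely different and rather elegant counting argument; it recovers the $g+1$ without appealing to Riemann--Hurwitz and makes transparent why bridges do not affect the count (each bridge enlarges a component of $\mathrm{Fix}(j)$ without changing $\chi(B)$). The cost is exactly the bookkeeping you identify: one must check that the weakly-fixed subtree is always a \emph{full} component of $\mathrm{Fix}(j)$. This follows from the continuity argument you sketch (on each component $K$ the set where $\tj$ exchanges $K^{\pm}$ is clopen), together with the uniqueness in \cref{fixedpoints} and \cref{rem:fixedpoints}, but it does need to be said. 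The paper's reduction avoids this entirely at the price of invoking bridge-contraction invariance as a black box.
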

}

\begin{proof}
       The number of covers by hyperelliptic graphs doesn't change when contracting bridges, so we may as well assume that $\Gamma$ is
     $2$-edge connected and that the weakly-fixed tree is , in fact, a point $x$.

       It follows from \cref{analogous} and \cref{constrhypcovers}, that for every choice of a $j$-fixed point $x$, we can construct exactly one hyperelliptic double cover up to isomorphism. Therefore,
        the number of distinct hyperelliptic double covers is equal to the number of fixed points of $\Ga$, which is $g+1$.
        
\end{proof}
\begin{remark}
    For algebraic étale double covers $\widetilde C\to C$ of smooth curves, the number of covers by hyperelliptic curves is ${2g+2\choose 2}$ among the $2^{2g}-1$ distinct connected covers \cite{HMfarkashypcovers}. 
   Such covers are indexed by choosing $2$  of the $2g+2$ fixed points of the hyperelliptic involution on $C$ and letting them not be fixed by the hyperelliptic involution on $\wt C$. 
     Moreover, by \cite{Panizzut} we know that the  fixed points of the hyperelliptic involution on $C$ tropicalise in pairs to the $g+1$ fixed points of the tropical hyperelliptic involution, therefore it makes sense to have this result in the tropical setting as well.
  
\end{remark}

We end this section by leveraging the fact that covers of hyperelliptic graphs are so constrained    to provide a simple description for  the $\Psi$-collapsible locus in that case. 

\begin{lemma}\label{lem:CollapsibleLocusOfHyperelliptic}
    Suppose that $\pi:\wt\Gamma\to\Gamma$ is a free double cover of hyperelliptic metric graphs. If the weakly-fixed tree is a non-disconnecting point in the middle of an edge $e$, then the cyclic $\Psi$-collapsible locus consists of a single properly disconnecting pair covering $e$. In all other cases, the locus is empty. 
\end{lemma}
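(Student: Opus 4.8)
The plan is to reduce immediately to the $2$-edge-connected case and then identify the disconnecting pairs over non-bridges with the preimage of the edge through the weakly-fixed point. First I would contract all bridges of $\Ga$ together with their preimages in $\tGa$; by \cref{lem:contractedEdges} the cyclic $\Psi$-collapsible locus is precisely the union of the disconnecting pairs $\{f',f''\}$ whose image $f=\pi(f')$ is not a bridge, and this collection is unaffected by the contraction. After contracting, \cref{rem:fixedpoints} shows the weakly-fixed tree becomes a single point $x$, which by \cref{prop:valencyoffixedpoints} is either a cut vertex (valency $\ge 4$) or a non-disconnecting valency-$2$ midpoint of an edge $e$. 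In the latter case \cref{fixedpoints}(ii) gives that the preimages $\{p,\iota p\}$ of $x$ form a disconnecting pair, and since $e$ is not a bridge its two lifts $e',e''$ are non-bridges; hence $\{e',e''\}$ is a properly disconnecting pair in the sense of \cref{def:propdiscpair} lying in the cyclic locus. This settles existence.

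For uniqueness, suppose $\{f',f''\}$ is any disconnecting pair with $f=\pi(f')$ not a bridge, and write $\tGa\setminus\{f',f''\}=A\sqcup B$. Since $\iota$ permutes the lifts of $f$, it preserves the cut and therefore either fixes the pair $\{A,B\}$ side-wise or swaps $A$ and $B$. If $\iota(A)=A$, then both preimages of every vertex lie on the same side, so $\pi(A)$ and $\pi(B)$ share no points and $f$ is the unique edge joining them; this makes $f$ a bridge, a contradiction. Hence $\iota$ must swap $A$ and $B$.

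In that remaining case I would take the midpoint $m$ of $f'$ and fire the set $A_+$ consisting of $A$ together with the two half-edges running from the $A$-side up to $m$ and to $\iota m$; since $\partial A_+=\{m,\iota m\}$, this firing moves $m+\iota m$ to a nearby distinct effective divisor, so $m+\iota m$ has positive, hence exactly $1$, rank. As $\tGa$ is hyperelliptic, its unique $g^1_2$ forces $m+\iota m\simeq m+\tj m$, whence $\iota m\simeq\tj m$; because $\iota m$ lies in the interior of the non-bridge $f''$ it is not linearly equivalent to any other point, so $\tj m=\iota m$. Pushing forward, $2\pi(m)=\pi_*(m+\iota m)$ is a rank-$1$ divisor on $\Ga$ (using \cref{commutative} and Clifford exactly as in the proof of \cref{fixedpoints}), which forces $\pi(m)$ to be a fixed point of $j$ whose preimages are not fixed by $\tj$. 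By the uniqueness statement in \cref{fixedpoints} this fixed point is exactly $x$, so $f=e$ and $\{f',f''\}=\{e',e''\}$.

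Finally, the remaining assertion follows by reading this analysis contrapositively: any disconnecting pair over a non-bridge produces, via its midpoint $m$, a non-disconnecting valency-$2$ fixed point $\pi(m)=x$ in the middle of a non-bridge. When the contracted weakly-fixed tree is instead a cut vertex, or when the original weakly-fixed tree $T_f$ is a genuine tree—whose edges are then bridges of $\Ga$ by \cref{rem:fixedpoints}—no such point exists, so the cyclic locus is empty. The main obstacle is precisely this uniqueness step: the genuinely new content is showing that every disconnecting pair over a non-bridge must sit over the weakly-fixed point, and the crux is the dichotomy for the $\iota$-action on the two sides of the cut together with the rank-$1$ (hence hyperelliptic) identification $\tj m=\iota m$ in the side-swapping case.
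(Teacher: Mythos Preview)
Your proof is correct and takes a genuinely different route from the paper's. Both reduce to the $2$-edge-connected case and both handle existence via \cref{fixedpoints}(ii), but the uniqueness arguments diverge. The paper works combinatorially through Construction~$\star$: it uses the explicit tree decomposition $\wt\Gamma=\wt T^\pm\cup\wt T'^\pm$ together with \cref{constrhypcovers} (which says $S$ consists only of vertices adjacent to the weakly-fixed point) to show directly that any edge $f$ not adjacent to $x',x''$ admits a lifted path in $\wt T^+$ avoiding both $f$ and $\iota f$, so $\{f,\iota f\}$ cannot disconnect. Your argument is instead divisor-theoretic: from any disconnecting pair $\{f',f''\}$ over a non-bridge you extract the midpoint $m$, show $m+\iota m$ has rank~$1$, invoke uniqueness of the hyperelliptic $g^1_2$ to get $\tj m=\iota m$, and then appeal to the uniqueness clause of \cref{fixedpoints} to force $\pi(m)=x$. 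The paper's approach is the natural payoff of the structural machinery of \S\ref{sec:hyperellipticDoubleCovers}; yours is more intrinsic, bypasses Construction~$\star$ entirely, and would transplant more easily to settings where such an explicit decomposition is unavailable. The only step worth tightening is the passage from $\iota m\simeq\tj m$ to $\iota m=\tj m$: rather than saying ``lies in the interior of the non-bridge $f''$'', it is cleaner to note that after your bridge contraction $\wt\Gamma$ is $2$-edge-connected, so the Abel--Jacobi map is injective on points.
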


\begin{proof}

    We may assume that $\wt\Gamma$ is 2-edge-connected since contracting the bridges has no effect on the cyclic $\Psi$-collapsible locus. We may also assume that the weakly-fixed tree is a point $x$ since its preimages in $\wt\Gamma$ are never in the $\Psi$-collapsible locus. Denote $x',x''$ the preimages of $x$ in $\wt\Gamma$. 

Let $f$ be an edge of $\wt\Gamma$ not adjacent to $x',x''$.
We may assume, without loss of generality, that $f$ belongs to $\wt T^+$.
We claim that, if  none of the vertices of $f$ are in $S$ (that is, $f$ does not traverse between $\wt T^+$ and $\wt T^-$), then removing $f$ and $\iota f$ does not disconnect $\wt\Gamma$. To see that, start from a path $\gamma$ in 
$\Gamma\setminus\pi(f)$ between the endpoints of $\pi(f')$. Since $f$ is not adjacent to $S$, it follows that $\gamma$ lifts to a path 
between the endpoints of $f$ that is fully contained in $\wt T^+$ and, in particular, doesn't go through $\iota f$. Similarly, there is a path in $\wt\Gamma$ between the endpoints of $\iota f$ that doesn't go through $f$. It follows that  $\wt\Gamma\setminus\{f',f''\}$ is connected.

By the assumption that $\wt\Gamma$ is hyperelliptic, \cref{constrhypcovers} implies that the  vertices in $S$ are all adjacent to the weakly-fixed point. We conclude that the only edges that could be contracted by $\Psi$ are the ones adjacent to $x'$ or $x''$.

Suppose now that the weakly-fixed point $x$ is disconnecting and let $f'$ be an adjacent edge. Then the removal of $\{f',\iota f'\}$ does not disconnect the graph, since  there is a path from $x'$ to $x''$ via the complement of the 2-edge-connected component of $\wt\Gamma$ containing $f'$. 

Conversely, if the weakly-fixed point $x$ is non-disconnecting then, by \cref{fixedpoints}, its preimages $x',x''$ are  a disconnecting pair, so the edges containing them are contracted by $\Psi$.

\end{proof}

\section{The image of the Abel--Prym map for hyperelliptic double covers}
\label{sec:hypcase}

In this final section, we continue our investigation of the  Abel--Prym graph, under the assumption that the source graph $\wt\Gamma$ is hyperelliptic. We show that its Jacobian
is isomorphic, as a principally polarised abelian variety, to the Prym variety of the double cover. 
We begin by providing two different descriptions of the image, one that is simple and abstract and one that is more explicit.

 \begin{lemma}\label{lem:quotientbyinvolution}
Let $\wt\Gamma$ be a $2$-edge connected  hyperelliptic metric graph. Then the Abel--Prym graph is isomorphic to the quotient graph $\tGa/{(\iota\circ \tj)}$. In particular, the Abel--Prym graph is hyperelliptic.
     \end{lemma}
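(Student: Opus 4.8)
The plan is to realise the Abel--Prym graph as the quotient of $\wt\Gamma$ by the involution $\sigma := \iota\circ\tj$. First I would check that $\sigma$ is a genuine involution: by \cref{commutative} the maps $\iota$ and $\tj$ commute, so $\sigma^2=\iota\tj\iota\tj=\iota^2\tj^2=\mathrm{id}$. The crucial structural input is \cref{shapeOfFibres}: its first half, applied pointwise, gives $\Psi(p)=\Psi(\iota\tj p)=\Psi(\sigma p)$ for every $p$, so $\Psi$ is $\sigma$-invariant and factors as $\Psi=\bar\Psi\circ q$, where $q\colon\wt\Gamma\to\wt\Gamma/\sigma$ is the quotient map and $\bar\Psi\colon\wt\Gamma/\sigma\to\Prym(\wt\Gamma/\Gamma)$ has image exactly the Abel--Prym graph. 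Thus $\bar\Psi$ is surjective onto $\Psi(\wt\Gamma)$ by construction, and the whole content is to upgrade $\bar\Psi$ to an isomorphism of metric graphs.

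I would next establish bijectivity of $\bar\Psi$. Since $\wt\Gamma$ is $2$-edge-connected it has no bridges, so the converse half of \cref{shapeOfFibres} applies to any two points lying outside the interior of the $\Psi$-collapsible locus: $\Psi(p)=\Psi(q)$ forces $q\in\{p,\sigma p\}$, i.e.\ $q(p)=q(q)$, and hence $\bar\Psi$ is injective there. To treat the collapsible locus I would invoke \cref{lem:CollapsibleLocusOfHyperelliptic}, according to which the cyclic $\Psi$-collapsible locus is either empty or a single properly disconnecting pair $\{e',e''\}$ covering the edge of $\Gamma$ whose midpoint is the weakly-fixed point. On this pair $\sigma$ acts as a reflection fixing the two preimages of the weakly-fixed point, so $e',e''$ are precisely the edges \emph{flipped} by $\sigma$; by the quotient convention these are contracted in $\wt\Gamma/\sigma$, which matches the fact (\cref{thm:behaviousOnEdges}(3)) that $\Psi$ contracts them. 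This compatibility is what makes $\bar\Psi$ a set-theoretic bijection onto the Abel--Prym graph.

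The remaining, and hardest, step is the metric identification, which I expect to be the main obstacle. Edge by edge I would compare the image $\Psi(\te)$ with the edge of $\wt\Gamma/\sigma$ that $\te$ produces, using the trichotomy of \cref{thm:behaviousOnEdges} and the metric formula \eqref{eq:PrymMetric}: an edge swapped by $\sigma$ onto a distinct edge descends to a single quotient edge and is mapped isometrically by $\Psi$ (\cref{lem:notBridgeNotDisconnectingPair}), while the flipped pair is contracted on both sides, so the lengths agree in these cases. The delicate point is to rule out any discrepancy coming from edges that $\Psi$ treats non-isometrically—in particular the preimages of bridges of $\Gamma$, where $\Psi$ dilates—and to confirm that the quotient's fold-and-contract rule on the weakly-fixed locus exactly reproduces the behaviour of $\Psi$. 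I would organise this by first reducing to the case where $\Gamma$ itself is $2$-edge-connected, contracting the bridges of $\Gamma$ together with their preimages so that every non-contracted edge is swapped by $\sigma$ and mapped isometrically, and then checking that this reduction is compatible with both the quotient and the image.

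Finally, for the ``in particular'' clause I would argue hyperellipticity abstractly through the Klein four-group $\langle\iota,\tj\rangle$. As $\tj$ commutes with $\sigma$, it descends to an involution $\bar\tj$ of $\wt\Gamma/\sigma$, and since $\langle\sigma,\tj\rangle=\langle\iota,\tj\rangle$ the iterated quotient satisfies
\[
(\wt\Gamma/\sigma)\big/\bar\tj \;=\; \wt\Gamma/\langle\iota,\tj\rangle \;=\; \Gamma/j,
\]
which is a tree because $\Gamma$ is hyperelliptic. Thus $\wt\Gamma/\sigma$ admits a finite harmonic morphism of degree $2$ onto a tree, and by the criterion of \cite[Theorem 3.13]{CH} recalled in \cref{sec:preliminaries} it is hyperelliptic. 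Transporting this along the isomorphism $\bar\Psi$ shows the Abel--Prym graph is hyperelliptic, completing the argument.
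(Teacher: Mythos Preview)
Your approach matches the paper's: factor $\Psi$ through the quotient by $\sigma=\iota\circ\tj$ using \cref{shapeOfFibres}, then verify edge-by-edge that the metrics agree. Where you diverge is in the handling of the dilated edges. The paper does not reduce to $\Gamma$ being $2$-edge-connected; that reduction is exactly where your plan becomes circular, because the ``compatibility check'' you defer amounts to showing that the edges you want to contract are simultaneously pointwise fixed by $\sigma$ (hence dilated by $2$ in the quotient) and dilated by $2$ under $\Psi$. The paper argues this directly: if $\sigma$ fixes an edge $\te$ pointwise then $\tj=\iota$ on $\te$, so $j$ fixes $\pi(\te)$ pointwise and $\pi(\te)$ must be a bridge; since $\wt\Gamma$ is $2$-edge-connected, $\te$ and $\iota\te$ form a properly disconnecting pair and \cref{lem:edgeMappedWithMultiplicity2} gives the dilation on the $\Psi$ side. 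Conversely, the global injectivity in that lemma together with $\Psi\circ\sigma=\Psi$ forces $\sigma$ to fix any $\Psi$-dilated edge pointwise. Once the fixed/dilated and flipped/contracted identifications are in place, the remaining edges are swapped and of type~(4) by elimination, so you do not need to invoke \cref{lem:notBridgeNotDisconnectingPair} with hypotheses you have not verified (an edge swapped by $\sigma$ need not a priori satisfy those hypotheses).

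Your Klein four-group argument for hyperellipticity is more explicit than the paper's one-line appeal to ``quotient of a hyperelliptic graph by a harmonic automorphism'', and is a fine alternative.
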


Here, we are taking the quotient in the sense of (see \cite[Section 2.2]{CH} or \cite[Section 1.3]{LenUlirschZakharov_abelianCovers}) with respect to the canonical model of $\wt\Gamma$. In particular, if a segment is flipped by the Abel--Prym map, then it will get contracted in the quotient.

  \begin{proof}  \label{graphModuloIJ}

The map $\iota\circ \tj$ is indeed a harmonic automorphism of degree $2$, since 
  both $\iota$ and $\tj$ are isometric involutions. 
    Set-theoretically,  the Abel--Prym map identifies every point $p$ with $\iota\circ\tj(p)$. So we need to check that it contracts the same edges as  the quotient map  and prescribes the same edge-lengths.

From the definition of the quotient,  
every edge gets dilated by a factor of its stabilizer and edges are contracted when they are flipped by the involution. So an edge $e$ is dilated by a factor of 2 whenever it is fixed by $\iota\circ\tj$, namely, when $\iota(p) = \tj(p)$ for all $p\in e$. But then $j(\pi(p)) = \pi(p)$, so $\pi(p)$ is fixed by the hyperelliptic involution. Using \cref{fixedpoints} and the fact that the graph is 2-edge-connected, we see that $e$ and $\iota e$ are a properly disconnecting pair such that $\pi(e)$ is a bridge, so the edge $e$ is also dilated by the Abel--Prym map.

  Now let $f$ be an edge flipped by $\iota\circ\tj$. Then the middle point $p$ of $f$ is fixed, meaning that $\tj(p)=\iota(p)$. The point $\pi(p)$ corresponds to the weakly-fixed point  
 and  by \cref{fixedpoints}, the set $\{p,\iota(p)\}$ is a disconnecting pair. It follows that, the edge $f$ containing $p$  gets contracted by $\Psi$. Conversely, if $\Psi$ contracts edges $\{f,\iota(f)\}$ which are not bridges then, by hypothesis, they form a disconnecting pair and by the hyperellipticity of $\tGa$, we have $\tj(f)=\iota(f)$.

         Finally, $\Psi(\wt\Gamma)$ is hyperelliptic as the quotient of a hyperelliptic graph by a harmonic automorphism.

\end{proof}

\begin{remark}
The lemma doesn't hold when $\wt\Gamma$ is not assumed to be 2-edge-connected. In that case, the bridges get contracted by the Abel--Prym map but are not flipped by $\iota\circ\tj$ (regardless of the choice of model).  
Even when $\wt\Gamma$ is 2-edge-connected, the quotient $\wt\Gamma/\iota\circ\tj$ doesn't always coincide with the topological quotient since the former contracts the cyclic $\Psi$-collapsible locus while the latter doesn't. 
\end{remark}

\subsection{The graph $\Gamma^{\dagger}$}\label{GammaDagger}
Given a free double cover $\pi:\wt\Gamma\to\Gamma$ of hyperelliptic graphs, we now describe a graph $\Gamma^{\dagger}$ that, as we will show in \cref{structureOfImage}, is isomorphic as a metric graph to the image of the Abel--Prym map. 
Let $T_f$ be the weakly-fixed subtree of $j$. 

If removing $T_f$ does not disconnect $\Gamma$, then $T_f$ is a single point $x_f$ of valency 2 and the preimage in $\wt\Gamma$ of every bridge of $\Gamma$ is a pair of bridges.
In this case,   $\Gamma^{\dagger}$ is the graph obtained from $\Gamma$ by deleting the two edges adjacent to $x_f$ and contracting all the bridges. 

If removing $T_f$ does disconnect $\Gamma$ then the preimage in $\wt\Gamma$ of every edge of $T_f$ is a properly disconnecting pair, and the preimage of every other bridge of $\Gamma$ is a pair of bridges. In this case, 
$\Gamma^{\dagger}$ is obtained from $\Gamma$ by replacing $T_f$ with two isomorphic copies of $T_f$ stretched by a factor of 2, and, for any other pair of edges $e,j(e)$ of $\Gamma$ terminating at $T_f$, letting one of them terminate at one copy and the other terminate at the other copy (the choice of copy will not affect the isomorphism type of the graph). All the other bridges are contracted.

\begin{proposition} \label{structureOfImage}

 Let $\pi\colon \tGa\to \Gamma$ be a free double cover of hyperelliptic metric graphs of genus $g_{\Gamma}\geq 2$ such that $\tGa$ is 2-edge-connected. Then $\Psi(\wt\Gamma) = \Gamma^{\dagger}$, where $\Gamma^{\dagger}$ is described in \cref{GammaDagger}.
In particular, the Abel--Prym graph has genus $g_{\Ga}-1$.

    \begin{proof}

The statement about the genus is straightforward from the first part of the statement, since the graph $\Gamma^{\dagger}$ is obtained from $\Gamma$ by either keeping the same number of vertices and removing $|E(T_f)|$ edges or by keeping the number of edges but adding $|V(T_f)|$ vertices. 
So we need to prove that $\Psi(\wt\Gamma) = \Gamma^{\dagger}$. For simplicity of notations, we will assume that $\Gamma$ is 2-edge-connected. The general case requires only minor adjustments.

Suppose first that $x$ is not a cut vertex. By a slight abuse of notation, we will identify points of $\Gamma^{\dagger}$ with points in $\Gamma$.
Recall that $\Gamma$ consists of two copies $T_1$ and $T_2$ of a tree $T$ glued at the fixed point of $j$ and that $\wt\Gamma$ can be identified with four copies $T_1^+,T_1^-,T_2^+,T_2^-$ of $T$. If $x_1^+, x_1^-,x_2^+,x_2^-$ are the points corresponding to $x$ in the four trees, then  the trees $T_i^+,T_i^-$ (for $i=1,2$) are glued with each other  at all  their leaves, except for the preimages of the point $x$, in which $x_1^+$ is glued to $x_2^-$ and $x_2^+$ is glued to $x_1^-$.  

If $y\in\wt\Gamma$ is such that $\pi(y)$ doesn't belong to the edge containing $x$, the pair $y,z=\iota\circ\tj(y)$ does not disconnect $\wt\Gamma$ and, therefore, $z$ is the unique point other than $y$ such that 
$\Psi(y)=\Psi(z)$. 
If $y\in T^+_i$, then $z\in T^-_{i+1}$ (where the index is taken modulo 2) and vice versa. Furthermore, if $y$ and $z$ are in the interior of edges, the map $\Psi$ is a local isometry on each of them. 
If, on the other hand, $\pi(y)$ does belong to the edge containing $x$, then $y$ and $z$ do form a disconnecting pair, so their entire edges get contracted by $\Psi$. 
It follows that $\Psi(\wt\Gamma)$ contains an isometric copy of $T_1^+\cup T_2^+$, except for contracting  each of the four edges emanating from the two points $x_i^{\pm}$. But contraction of an edge amounts to deleting it and identifying its endpoints, so the result follows. 

Now, suppose that $x$ is a cut vertex and suppose that $\val(x) = 2k$. Then removing $x$ disconnects $\Gamma$ into $k$ connected components. Recall that, for each of them, we have a pair of trees $T_i^1,T_i^2$ and $\wt\Gamma$ contains  two copies  of each tree, denoted by $T_i^{1+}, T_i^{1-}, T_i^{2+},T_i^{2-}$ with points $x_i^{j\pm}$ corresponding to $x$ in each. To obtain $\wt\Gamma$, glue the leaves of   $T_i^{1+}$ with those of $T_i^{2+}$ and the leaves of $T_i^{1-}$ with those of $T_i^{2-}$ as in $\Gamma$, except that $x_i^{1+}$ is glued to $x_i^{2-}$ and $x_i^{1-}$ is glued to $x_i^{2+}$.
Similarly to the first case, $\Gamma^{\dagger}$ will contain an isometric copy of $T_i^{1+}\cup T_i^{2+}$ for each $i$, except for the edges emanating from $x$. However, unlike the first case, when $y$ is on an edge emanating from $x$, the points $y,\iota(\tj (y))$ do not form a disconnecting pair, so $\Psi(\wt\Gamma)$ contains an isometric copy of each of those edges.

    \end{proof}
\end{proposition}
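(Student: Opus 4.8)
The plan is to identify the image through the quotient description of \cref{lem:quotientbyinvolution} and then to compute that quotient explicitly using the concrete model of $\wt\Gamma$ furnished by Construction~$\star$.

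I would first dispose of the genus assertion, which is a pure Euler-characteristic count straight from the description of $\Gamma^\dagger$ in \cref{GammaDagger}: in the non-disconnecting case one deletes a single non-bridge edge, lowering the first Betti number by one, whereas in the disconnecting case doubling the tree $T_f$ adds $|V(T_f)|$ vertices and $|V(T_f)|-1$ edges, again dropping the genus by one; contracting bridges changes nothing. Hence $\Gamma^\dagger$ has genus $g_\Gamma-1$.

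For the equality $\Psi(\wt\Gamma)=\Gamma^\dagger$ I would reduce to the case where $\Gamma$ is $2$-edge-connected. Both sides contract all bridges of $\wt\Gamma$ (the left-hand side by \cref{lem:contractedEdges}, the right-hand side by construction), and by \cref{lem:CollapsibleLocusOfHyperelliptic} the cyclic collapsible locus is unaffected by bridge contraction; by \cref{rem:fixedpoints} the weakly-fixed subtree then collapses to a single point $x$. Invoking \cref{lem:quotientbyinvolution}, it suffices to compute $\wt\Gamma/(\iota\circ\tj)$. Here Construction~$\star$ together with the characterization \cref{constrhypcovers} presents $\wt\Gamma$ as four copies $T_1^\pm,T_2^\pm$ of the quotient tree, glued along the fixed locus, with the swapping set $S$ equal to the leaves of $T_f$, i.e.\ to $\{x\}$. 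Using that $\iota$ exchanges the two sheets ($T_i^+\leftrightarrow T_i^-$) while $\tj$ exchanges the two trees compatibly with $\pi$ (\cref{commutative}), one checks that $\iota\circ\tj$ pairs $T_1^+$ with $T_2^-$ and $T_1^-$ with $T_2^+$; away from $x$ the quotient therefore retains a single copy $T_1^+\cup T_2^+$, glued along the $g$ untwisted fixed points, which reproduces $\Gamma$ with the $T_f$-gluing severed.

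The crux, and the step I expect to require the most care, is the local behaviour at the weakly-fixed point $x$, where the two cases of \cref{GammaDagger} diverge. In both cases each of the two preimages of $x$ is fixed by $\iota\circ\tj$, but the induced action on the incident edges differs. When $x$ is not a cut vertex it has valency two, its preimages form a properly disconnecting pair (\cref{fixedpoints}), and $\iota\circ\tj$ flips the through-edge lying over $x$; by \cref{thm:behaviousOnEdges} such an edge is contracted, and since contraction deletes the edge and identifies its endpoints this yields the ``delete the two edges adjacent to $x_f$'' description. When $x$ is a cut vertex of valency $2k$ its preimages are \emph{not} a disconnecting pair (\cref{fixedpoints}), so by \cref{thm:behaviousOnEdges} the incident edges survive and map isometrically; here $\iota\circ\tj$ permutes the $2k$ edges at each preimage in $k$ pairs without flipping any of them, and the two preimages remain distinct in the quotient, producing the two copies of $T_f$ carrying the redistributed edge-pairs $e,j(e)$. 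Assembling these local pictures over all of $\wt\Gamma$ gives $\Psi(\wt\Gamma)=\Gamma^\dagger$.
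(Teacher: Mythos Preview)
Your approach mirrors the paper's: both compute the image via the identification of $\Psi(\wt\Gamma)$ with $\wt\Gamma/(\iota\circ\tj)$ (the paper does this implicitly through the fibre shape $\{y,\iota\tj y\}$, you invoke \cref{lem:quotientbyinvolution} explicitly), and both analyse that quotient via the four-tree model from Construction~$\star$, splitting on whether the weakly-fixed point is a cut vertex. Your case analysis at $x$ is essentially identical to the paper's.

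There is, however, a concrete gap in your reduction to $2$-edge-connected $\Gamma$. You write ``both sides contract all bridges of $\wt\Gamma$'', but $\wt\Gamma$ is already $2$-edge-connected by hypothesis, so this is vacuous. The bridges at stake are bridges of \emph{$\Gamma$}, and (since $\wt\Gamma$ has none) these are precisely the edges of $T_f$. Their preimages in $\wt\Gamma$ are properly disconnecting pairs lying over bridges, which $\Psi$ does \emph{not} contract but rather dilates by a factor of $2$ (\cref{lem:edgeMappedWithMultiplicity2}); this is exactly the ``stretched by a factor of $2$'' in the description of $\Gamma^\dagger$ when $T_f$ has edges. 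Your argument never accounts for this dilation, and the conclusion that $T_f$ ``collapses to a single point'' does not follow from anything you have written. The paper is admittedly also informal here --- it simply assumes $\Gamma$ is $2$-edge-connected and declares that the general case ``requires only minor adjustments'' --- but you should either make that move honestly or actually treat the $T_f$-edges via the dilation, rather than asserting a contraction that does not occur.
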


\subsection{The Jacobian of the Abel--Prym graph}\label{sec:JacobianVSPrym}

Using the description above, we can relate the Jacobian of the Abel--Prym map with the Prym variety of the double cover. 

We quickly remind the reader about the structure of tropical abelian varieties and the metric induced by the Prym variety. See \cite[Section $2$]{LZ22} and \cite[Section $4$]{RZ_ngonal} for more details. Let $\Lambda$ and $\Lambda'$ be finitely generated free abelian groups of the same rank and let $[\cdot, \cdot] : \Lambda \times \Lambda' \to \R$ be a non-degenerate pairing. The triple $(\Lambda, \Lambda', |\cdot, \cdot])$ defines a \emph{real torus with integral structure} (or simply an \emph{integral torus})  
 \[
 \Si=\Hom(\Lambda, \R) / \Lambda',
 \]
 where the inclusion $\Lambda' \subseteq \Hom(\Lambda, \R)$ is given by $\lambda' \mapsto [\cdot,\lambda']$.
The \emph{dimension} of an integral torus is $\dim_{\R}\Si=\rk \La=\rk \La'$. 
 A \emph{polarisation} on $\Sigma$ is a group homomorphism $\xi : \Lambda' \to \Lambda$ such that 
 \[
 (\cdot,\cdot) = [\xi(\cdot), \cdot] \colon \Lambda'_\R \times \Lambda'_\R \to \R
 \]
 is a symmetric and positive definite bilinear form. A polarisation is necessarily injective, and is called \emph{principal} if it is also bijective.
  An integral torus together with a principal polarisation is called \emph{principally polarised tropical abelian variety} or \emph{pptav} for short.

A \emph{homomorphism of integral tori} $f=(f^{\#},f_{\#})\colon (\Lambda_1,\Lambda_1',[\cdot,\cdot]_1)\to (\Lambda_2,\Lambda_2',[\cdot,\cdot]_2)$ consists of a pair of homomorphisms $f^{\#}\colon \Lambda_2\to \Lambda_1$
and $f_{\#}\colon \Lambda_1'\to \Lambda_2'$
satisfying the relation 
\[
[f^{\#}(\lambda_2),\lambda_1']_1=[\lambda_2,f_{\#}(\lambda_1')]_2
\]
for all $\lambda_1'\in\Lambda_1'$ and $\lambda_2\in \Lambda_2$.
A homomorphism $f=(f^{\#},f_{\#})$ is an \emph{isomorphism} if $f_{\#}$ and $f^{\#}$
are isomorphisms.

The quintessential example of a pptav is the  \emph{Jacobian} of a metric graph $\Ga$, given by
\[
\Si_1=\mathrm{Jac}(\Ga)=(H^1(\Ga,\ZZ),H_1(\Ga,\ZZ),[\cdot,\cdot]_1),
\] 
where the pairing $[\cdot,\cdot]_1$ is given by integration along cycles and the principal polarisation is natural map identifying  $H_1(\Gamma,\ZZ)$ with $H^1(\Gamma,\ZZ)$. Note that some sources use $\Omega(\wt\Gamma,\ZZ)$ rather than $H^1(\wt\Gamma,\ZZ)$ but we don't require that perspective here.

Arguably, the second most important example of an abelian variety is the  Prym variety  associated to a free double cover $\pi\colon \tGa \to \Ga$,  given by
\[ 
\Sigma_2=\Prym(\tGa/\Ga)=((\text{Coker}\,\pi^*)^{tf},\mathrm{Ker}\,\pi_*,[\cdot,\cdot]_2).
\]
Here, $\pi^*$ and $\pi_*$ are the pullback and pushforward on cohomology and homology respectively, and $(\text{Coker}\,\pi^*)^{tf}$ is the quotient of $(\text{Coker}\,\pi^*)$ by its torsion subgroup. The most convenient way of thinking of $\mathrm{Ker}\pi_*$ is as antisymmetric cycles in $\wt\Gamma$, namely oriented cycles that flip sign under the action of the involution. The pairing $[\cdot,\cdot]_2$ is induced by the integration pairing on $\Jac(\tGa)$ and the principal polarisation is obtained from the principal polarisation on the Jacobian  divided by 2. The Prym variety is a pptav of dimension $g-1$.

\medskip

\subsubsection{Bases for homology}
Given a spanning tree $\calT$ for $\Gamma$, we now describe a
natural bases for $H_1(\Gamma,\ZZ), H_1(\wt\Gamma,\ZZ)$, and the antisymmetric cycles on $\wt\Gamma$, following  \cite[Construction B]{LZ22}). Throughout, fix an orientation on $\Gamma$ and a compatible orientation on $\wt\Gamma$.
As usual, we follow the convention that the preimage in $\wt\Gamma$ of an edge $e$ are denoted $\tilde e^+$ and $\tilde e^-$. 
Fix an edge $e_g$   of $\Gamma\setminus \calT$.  
 The preimage of $\calT$ in $\wt\Gamma$ has two connected components, denoted by 
$\wt{\calT}^{\pm}$, but adding $\wt e_g^+$ we obtain a spanning tree
\[
\widetilde{\calT}=\widetilde{\calT}^{+}\cup\widetilde{\calT}^{-}\cup \wt{e_g}^+.
\]
for $\wt\Gamma$.
For each of the edges $e_i$ in the complement of $\calT$, other than $e_g$, we define $\wt\gamma^+$ and $\wt\gamma^-$  the unique cycle 
of $\widetilde{\calT}\cup \{\widetilde{e}_{i}^{\pm}\}$, such that $\langle\tga_{i}^{\pm},\widetilde{e}_{i}^{\pm}\rangle=1$. Similarly, the cycle $\wt\gamma_g^-$ is the unique cycle of $\widetilde{\calT}\cup \{\widetilde{e}_{g}^{-}\}$ such that $\langle\tga_{g}^{-},\widetilde{e}_{g}^{-}\rangle=1$.   We will occasionally denote edges in the complement as $e$ (without specifying an index). In that  case, the corresponding cycles will be denoted by $\gamma_e$ and $\tilde\gamma_e^{\pm}$.

Now,
\begin{equation*}
    \{\tga^{\pm}_{1},\dots, \tga^{\pm}_{g-1}, \tga_g^-\}
\end{equation*}
is a $\ZZ$-basis for $H_1(\widetilde{\Gamma},\ZZ)$, and, more importantly for our purpose,  the set
\[
\calC = \left\{\wt\gamma_i^+ - \iota\wt\gamma_i^+\right\}_{i=1,\ldots,g-1}
\]
is a $\ZZ$-basis for the anti-symmetric cycles of $\wt\Gamma$, and hence for $\mathrm{Ker}\,\pi_*$. 

It will be useful to have a more detailed description for those cycles. Let $e\neq e_g$ be one of the  edges in the complement of $\calT$. If the lifts $\tilde e^{\pm}$ of $e$ do not traverse between the trees $\wt\calT^+$ and $\wt\calT^-$, then $\pi^{*}\gamma_e$ is the disjoint union of the cycles 
$\wt\gamma^+_e$ and $\iota\wt\gamma^+_e$. The multiplicity of each edge appearing in $\wt\gamma^+_e$ coincides with the multiplicity of the corresponding edge of $\gamma_e$ and has absolute value 1. 
Since $\wt\gamma^+_e$ is disjoint from $\iota\wt\gamma^+_e$, the multiplicity of each edge appearing in
$\wt\gamma^+_e - \iota\wt\gamma^+_e$ also has absolute value 1. 

When the preimages of $e$ do traverse between  the trees $\wt\calT^+$ and $\wt\calT^-$, the situation is a bit more complicated. Let $\tilde v^+$ and $\tilde v_g^+$ be the vertices of $\tilde e^+$ and $\tilde e_g^+$ respectively that belong to the tree $\wt\calT^+$ and let $v^+$ and $v_g^+$ be their images in $\Gamma$. Similarly, denote $u, u_g$ and $\tilde u, \tilde u_g$  the other end points of those edges. Then $\wt\gamma^+_e$ consists of the path along $\calT^+$ from $\tilde v_g^+$ to $\tilde v^+$, the edge $\tilde e^+$, the path in $\calT^-$ from $\tilde u$ to $\tilde u_g$, and finally the edge $e_g^+$. An edge appears with multiplicity 2 in  the antisymmetric cycle $\wt\gamma^+_e-\iota\wt\gamma^+_e$ when both it and its reflection appear in $\wt\gamma^+_e$ with opposite orientation. 
If we denote by $\delta_+$  the path from $v^+$ to $v_g^+$  and by $\delta_{-}$  the path from $u^-$ to $u_g^-$, then those are precisely the edges  in the preimage of $\delta_+\cap\delta_{-}$. 

We now specialise to the case where $\Gamma$ and $\wt\Gamma$ are both hyperelliptic  and choose the spanning tree $\calT$  with the property that, at each 2-connected component, the tree  includes exactly one of the two edges adjacent to the weakly-fixed tree.

Choose the edge $\tilde e_g$ that connects $\wt\calT^+$ with $\wt\calT^-$  to be adjacent to the weakly-fixed tree. Then an edge $e$ in the complement of $\calT$  lifts to edges that traverse between the $\calT^+$ and $\calT^-$ precisely when $e$ is adjacent to the weakly-fixed tree. For such an edge, the path $\delta_+$ from $v_g^+$ to $v^+$ is contained in the weakly-fixed tree, so  the intersection between $\delta_+$ and $\delta_-$ is contained in the weakly-fixed tree as well. It follows that the only edges of $\wt\gamma^+_e-\iota\wt\gamma^+_e$ of multiplicity $2$ are the ones passing through the preimage of the weakly-fixed tree. 

We proceed to describe a spanning tree for $\Gamma^{\dagger}$ and the resulting basis of homology. We give $\Gamma^{\dagger}$ the orientation induced from $\Gamma$. 
For each edge $e$ of $\Gamma$ that is not part of the weakly-fixed tree, denote $e^{\dagger}$  the corresponding edge of $\Gamma^{\dagger}$. If $e$ is part of the weakly-fixed tree, denote $e^{\dagger}$ and $e^{\dagger'}$ the two corresponding edges of $\Gamma^{\dagger}$. Now, the tree $\calT^{\dagger}$ will consist of an edge $e^{\dagger}$ for each edge $e$ not in the weakly-fixed tree, the edges $e^{\dagger},e^{\dagger'}$ for each edge $e$ that is part of the weakly-fixed tree, and the edge $e_g^{\dagger}$. The edges of $\Gamma^{\dagger}$ in the complement of $\calT^{\dagger}$ are exactly $e_1^{\dagger},\ldots,e_{g-1}^{\dagger}$. 

Let $e$ be one of those edges and, similarly to before, denote $v^{\dagger}, u^{\dagger}$ and $v_g^{\dagger}, u_g^{\dagger}$ the end points of the edges $e^{\dagger}$ and $e_g^{\dagger}$ respectively. 
If the edge $e$ is adjacent to the weakly-fixed tree then the corresponding cycle starts with a path from $v_g^{\dagger}$ to $v^{\dagger}$ along a copy of the weakly-fixed tree, proceeds with $e^{\dagger}$,  then the unique path along $\calT^{\dagger}$ to $u_g^{\dagger}$, which passes through the other copy of the weakly-fixed tree. 
In other words, for each edge appearing in $\wt\gamma_e^+$, there is a unique corresponding edge appearing in $\gamma_e^{\dagger}$ with compatible orientation. If the edge $e$ is not adjacent to the weakly-fixed tree then the corresponding cycle $\gamma_e^{\dagger}$ does not pass through the weakly-fixed tree and there is an orientation preserving bijection between its edges and the edges of $\gamma_e$, and in particular with the edges of $\wt\gamma_e^{\dagger}$.

\begin{example} 
\cref{cycle} and \cref{abel} show hyperelliptic double covers of metric graphs and their images under the Abel--Prym map. 
In \cref{cycle}, we outline the cycle $\gamma_1$, the anti-symmetric cycle $\widetilde{\gamma}^+_1-\iota\widetilde{\gamma}^+_1$ and its image under the Abel--Prym map $\Psi$. 
 \cref{abel} shows a double cover that has a non-trivial weakly-fixed tree consisting of a single bridge, denoted $b$. Among the elements of the  basis $\calC$, exactly one cycle passes through 
 the bridge.

 Its image under $\Psi$ is highlighted in green.

    \begin{figure}
    \centering
\begin{tikzpicture}
    
\coordinate (11) at (0,1);
\coordinate (12) at (-1,1);
 \coordinate (13) at (-1.5,1);
\coordinate (14) at (-2.5,1); 
\coordinate (11i) at (0,1);
\coordinate (12i) at (-1,1);
\coordinate (13i) at (-1.5,1);
 \coordinate (14i) at (-2.5,1);
 \coordinate (17) at (-0.5,1.5);
 \coordinate (17i) at (-0.5,0.5);
 \coordinate (18) at (-2,1.5);
 \coordinate (18i) at (-2,0.5);
\coordinate (19) at (-1.25,2.2);
 \coordinate (19i) at (-1.25,-1.2+1);

       \coordinate (a11) at (-0.5,6.7);
       \coordinate (a12) at (-1,5);
       \coordinate (a13) at (-1.5,5);
       \coordinate (a14) at (-2.5,5);
        \coordinate (a11i) at (0,5);
       \coordinate (a12i) at (-1,5);
       \coordinate (a13i) at (-1.5,5);
       \coordinate (a14i) at (-2.5,5);
        \coordinate (a17) at (-0.5,5.5);
         \coordinate (a17i) at (-0.5,0.5-1.5+5.5);
       \coordinate (a18) at (-2,5.5);
       \coordinate (a18i) at (-2,0.5-1.5+5.5);
       \coordinate (a19) at (-1.25,1.2+5);
        \coordinate (a19i) at (-1.25,-1.2-0.5+5.5);
       
\coordinate (x) at (0.15,6.7);

       \coordinate (b11) at (0,9-0.6);
       \coordinate (b12) at (-1,9-0.6);
       \coordinate (b13) at (-1.5,9-0.6);
       \coordinate (b14) at (-2.5,9-0.6);
        \coordinate (b11i) at (-0.5,6.7);
       \coordinate (b12i) at (-1,11.5-2-1-0.6+0.5);
       \coordinate (b13i) at (-1.5,4.2+4-0.3+0.5);
       \coordinate (b14i) at (-2.5,-0.5+9-0.6+0.5);
     \coordinate (b17) at (-0.5,9.2-0.3);
      \coordinate (b17i) at (-0.5,0.2-1.5+9-0.3+0.5);
       \coordinate (b18) at (-2,9.2-0.3);
       \coordinate (b18i) at (-2,0.5-1.5+9-0.6+0.5);
       \coordinate (b19) at (-1.25,1.2+9-0.6);
        \coordinate (b19i) at (-1.25,6.7+0.5);

\coordinate (c11) at (5.5,1.3); 
\coordinate (c12) at (3+1.5,1);
 \coordinate (c13) at (-1.5+5.5,1);
\coordinate (c14) at (1.5+1.5,1); 
\coordinate (c11i) at (5.5,0.8);
\coordinate (c12i) at (4.5,1);
\coordinate (c13i) at (-1.5+5.5,1);
 \coordinate (c14i) at (3,1);
 \coordinate (c17) at (-0.5+5.5,1.5);
 \coordinate (c17i) at (-0.5+5.5,0.5);
 \coordinate (c18) at (3.5,1.5);
 \coordinate (c18i) at (3.5,0.5);
\coordinate (c19) at (-1.25+5.5,2.2);
 \coordinate (c19i) at (-1.25+5.5,-0.2);
  
\coordinate (d11) at (5.5,7.25-0.5);
\coordinate (d12) at (4.5,6.5);
 \coordinate (d13) at (4,6.5);
\coordinate (d14) at (3,6.5); 
\coordinate (d11i) at (5.5,6.75-0.5);
\coordinate (d12i) at (4.5,6.5);
\coordinate (d13i) at (4,6.5);
 \coordinate (d14i) at (3,6.5);
 \coordinate (d17) at (5,7);
 \coordinate (d17i) at (5,6);
 \coordinate (d18) at (3.5,7);
 \coordinate (d18i) at (3.5,6);
\coordinate (d19) at (4.25,8.2-0.5);
 \coordinate (d19i) at (4.25,5.8-0.5);
            

 \foreach \i in {d12,d12i,12,12i,17i,17,11,11i,13,13i,14,14i,18,18i,19,19i,a11,x,
 a12,a12i,a13,a13i,a14,a14i,a17,a17i,a18,a18i,a19,a19i,b19i,
 b11i,b12,b12i,b13,b13i,b14,b14i,b17,b17i,b18,b18i,b19,b19i,
 d12,d12i,d13,d13i,d14,d14i,d17,d17i,d18,d18i,d19,d19i}
 \draw[fill=black](\i) circle (0.15em);
 
\draw[blue,fill=blue](d17) circle (0.15em);
\draw[blue,fill=blue](11) circle (0.15em);
\draw[blue,fill=blue](d12) circle (0.15em);
\draw[blue,fill=blue](d17i) circle (0.15em);
\draw[blue,fill=blue](17) circle (0.15em);
\draw[blue,fill=blue](12) circle (0.15em);
\draw[blue,fill=blue](17i) circle (0.15em);

\draw[thin, black] (19)--(18);
\draw[thin, black] (19)--(17);
\draw[thick, black] (18)--(13);
\draw[thick, black] (18)--(14);
\draw[thick, blue] (17)--(12);
\draw[thick, blue] (17)--(11);
\draw[thin, black] (19i)--(18i);
\draw[thin, black] (19i)--(17i);
\draw[thick, black] (18i)--(13i);
\draw[thick, black] (18i)--(14i);
\draw[thick, blue] (17i)--(12i);
\draw[thick, blue] (17i)--(11i);
\draw[thin, black] (14)--(14i);
\draw[thin, black] (13i)--(13);
\draw[thin, red] (12i)--(12);

\draw[thin, black] (a19)--(a18);
\draw[thin, black] (a19)--(a17);
\draw[thick, black] (a18)--(a13);
\draw[thick, black] (a18)--(a14);
\draw[thin, black] (a17)--(a12);
\draw[thin, black] (a17)--(a11);
\draw[dashed, black] (a19i)--(a18i);
\draw[thin, black] (a19i)--(a17i);
\draw[thick, black] (a18i)--(a13i);
\draw[thick, black] (a18i)--(a14i);
\draw[thin, black] (a17i)--(a12i);
\draw[thin, red] (a13i)--(a13);
\draw[thin, red] (a12i)--(a12);

\draw[dashed, black] (b19)--(b18);
\draw[thin, black] (b19)--(b17);
\draw[thick, black] (b18)--(b13);
\draw[thick, black] (b18)--(b14);
\draw[thin, black] (b17)--(b12);
\draw[thin, black] (b19i)--(b18i);
\draw[thin, black] (b19i)--(b17i);
\draw[thick, black] (b18i)--(b13i);
\draw[thick, black] (b18i)--(b14i);
\draw[thin, black] (b17i)--(b12i);
\draw[thin, black] (b17i)--(b11i);
\draw[thin, red] (a11)--(b11i);
\draw[thin, red] (b13i)--(b13);
\draw[thin, red] (b12i)--(b12);
\draw[thin, red] (b14)--(b14i);
\draw[thin, red] (a14)--(a14i);
 \draw[thin, black] (b17) to [out=-60, in=60] (a17i);

\draw[dashed, black] (d19)--(d18);
\draw[thin, black] (d19)--(d17);
\draw[thick, black] (d18)--(d13);
\draw[thick, black] (d18)--(d14);
\draw[thick, blue] (d17)--(d12);

\draw[thin, black] (d19i)--(d18i);
\draw[thin, black] (d19i)--(d17i);
\draw[thick, black] (d18i)--(d13i);
\draw[thick, black] (d18i)--(d14i);
\draw[thick, blue] (d17i)--(d12i);


\draw[thin, red] (c14)--(c14i);
\draw[thin, red] (c13i)--(c13);
\draw[thin, red] (c12i)--(c12);

\draw (-2.5,1.8) node [below] {$\gamma_1$};
\draw (0,1.8) node [below] {$\gamma_g$};
\draw (-2.5,1.8+4) node [below] {$\iota\widetilde{\gamma}^+_1$};
\draw (-2.5,1.8+7.5) node [below] {$\widetilde{\gamma}^+_1$};
  \draw (-1.2,3.2) node [below] {$\downarrow$};
    \draw (-0.9,3) node [below] {$\pi_F$};
 \draw (1.4,6.8) node [below] {$\rightarrow$};
 \draw (1.4,6.6) node [above] {$\psi$};
  \draw (2.6,6.6) node [above] {$\psi(\widetilde{\gamma}^+_1)$};
\draw (0.1,1) node [below] {$x$};
  \end{tikzpicture}
 \caption{The double cover of a graph  associated with $F=\{x\}$ and the corresponding Abel--Prym graph. 
  The dashed edges are an instance of edges identified by the map $\Psi$. The blue cycle is an example of a cycle that gets broken by $\Psi$. 
  }

    \label{cycle}
\end{figure}
\end{example}

\begin{figure}[h]
    \centering
\input{pic4}
 \caption{The Abel--Prym map of a double cover of a graph of genus $5$ with  weakly-fixed subtree $T_f=b$. We highlight with a dashed line an instance of two edges identified by $\Psi$. 
 In green, the cycle $\gamma_e^{\dagger}\in H_1(\Ga^{\dagger})$  coming from an edge $e$ adjacent to the weakly-fixed tree.  The squiggly arrows show the steps of constructing $\Gamma^{\dagger}$ from $\Gamma$. 
}
    \label{abel}
\end{figure}

Using the bases of cycles constructed above, we now relate the Prym variety of a double cover with the Jacobian of the Abel--Prym graph.

\begin{theorem}
\label{pptavs}
    The Prym variety of a hyperelliptic double cover is isomorphic, as pptav, to the Jacobian of the corresponding Abel--Prym graph, namely
    \[
    \Prym(\tGa/\Ga)\cong \mathrm{Jac}(\Psi(\tGa)).
    \]
    \begin{proof}

        We will show that there exists an isomorphism of integral tori $f=(f^{\#},f_{\#})$ between the Prym variety and the Jacobian of the Abel--Prym graph. Let $\calT^{\dagger}$ and $\wt\calT$ be the spanning trees for $\Gamma^{\dagger}$ and $\wt\Gamma$ described above and let $\calB^{\dagger}$ and $\wt\calC$ be the corresponding bases for homology and antisymmetric cycles. 
       We  maintain the  terminology introduced during the construction of the bases. 
         Define
\[
 \begin{tabular}{cccc}
	    $f_{\#} \colon$&$(\mathrm{Ker}\,\pi_*\colon H_1(\tGa)\to H_1(\Ga))$&$\longrightarrow$&$H_1(\Psi(\wt\Gamma))$  \\
	         &$\tga_{i}^+-\iota \tga_{i}^+$&$\mapsto$&$\Psi_*(\tga_{i}^+)$
	    \end{tabular}.
\]
The map $f^\#$ is defined  in a similar way:
under the identification of $H_1(\Psi(\tGa),\mathbb Z)$  with $H^1(\Psi(\tGa),\mathbb Z)$ using the principal polarisation, we define $f^{\#}$ as the map sending a cycle $\gamma_i^{\dagger}$ in $\calB^{\dagger}$  to the class of $\wt\gamma_i^+ - \iota\wt\gamma_i^+$ in the cokernel. 

It is clear from the description of the cycles that the various images $\Psi(\wt\gamma_i^+)$ are exactly the cycles constructed from the edges in the complement of the spanning tree $\calT^{\dagger}$, so  the maps $f_{\#}$ and $f^{\#}$ are isomorphisms.

We are left with proving that the integration pairing is compatible on both sides.
It suffices to verify this property on elements of the bases, namely that
\[
[f_{\#}(\wt\gamma_i^{+} - \iota(\wt\gamma_i^{+}), \gamma_j^{\dagger}]_1 =
[\wt\gamma_i^{+} - \iota(\wt\gamma_i^{+}), f^{\#}(\gamma_j^{\dagger})]_2 
\]
for all $i,j\in\{1,\ldots,g-1\}$. From the definitions of $f_{\#}$ and $f^{\#}$, this is equivalent to checking that
\[
[\gamma_i^{\dagger}, \gamma_j^{\dagger}]_1 =
[\wt\gamma_i^{+} - \iota(\wt\gamma_i^{+}), \wt\gamma_j^{+} - \iota(\wt\gamma_j^{+})]_2
\]
for all $i,j$.

From the description of the cycles, there is an orientation preserving bijection between the edges of $\wt\gamma_i^+$ and $\gamma_i^{\dagger}$. It what follows, we will analyse the contribution coming from each edge $e$ of $\Gamma$ to the two bilinear forms. 
Let $e$ be an edge of $\Gamma$, and denote by $\epsilon_i,\epsilon_j\in\{-1,0,1\}$ the coefficients in which 
 $e^{\dagger}$  appears in $\gamma_i^{\dagger}$ and $\gamma_j^{\dagger}$    (with respect to the fixed orientation). If $e$ doesn't belong to the weakly-fixed tree then the length of $e^{\dagger}$ is $\ell(e)$, so  it contributes $\epsilon_i\epsilon_j\ell(e)$ to $[\gamma_i^{\dagger},\gamma_j^{\dagger}]$. At the same time, there is a corresponding edge $\tilde e$ appearing in $\wt\gamma_i^{+}$ and $\wt\gamma_j^{+}$ with the same coefficients $\epsilon_i,\epsilon_j$ respectively.  As noted before, since $e$ is not part of the weakly-fixed tree,  the edge $\iota e$ does not appear in $\iota\wt\gamma^+$, so $e$ contributes  $2\epsilon_i\epsilon_j\ell(e)$ to the intersection pairing between $\wt\gamma_i^{+} - \iota\wt\gamma_i^{+}$ and $\wt\gamma_j^{+} - \iota\wt\gamma_j^{+}$. Since the bilinear form $[,]_2$ is obtained from the intersection pairing by division by 2, the edge contributes $\epsilon_i\epsilon_j\ell(e)$ to $[\wt\gamma_i^{+} - \iota\wt\gamma_i^{+}, \wt\gamma_j^{+} - \iota(\wt\gamma_j^{+})]_2$, as anticipated.

If, on the other hand, the edge $e$ does come from the weakly-fixed tree, the length of $e^{\dagger}$  is $2\ell(e)$ so it contributes $\epsilon_i,\epsilon_j\cdot 2\ell(e)$ to $[\gamma_i^{\dagger},\gamma_j^{\dagger}]$. The length of  the corresponding edge $\tilde e$  in $\wt\Gamma$ is $\ell(e)$. 
In this case, however, the edge $\iota\tilde e$ does appear in $\wt\gamma^{+}_i$, so it appears with multiplicities $2\epsilon_i,2\epsilon_j$ in $\wt\gamma_i^{+}$ and $\wt\gamma_j^{+}$ respectively. The edge therefore contributes  $4\epsilon_i\epsilon_j\ell(e)$ to the intersection pairing, therefore $2\epsilon_i\epsilon_j\ell(e)$ to the bilinear form. Either way, each edge of the graph contributes the same amount to both bilinear forms, so they coincide.

    \end{proof}
\end{theorem}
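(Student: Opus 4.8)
The plan is to exhibit an explicit isomorphism of integral tori $f = (f^\#, f_\#)\colon \Prym(\tGa/\Ga) \to \mathrm{Jac}(\Gamma^\dagger)$ and then verify that it intertwines the two principal polarisations, using the distinguished bases of homology and antisymmetric cycles constructed just above. Both sides have rank $g_\Gamma - 1$: the Prym variety by definition, and $\mathrm{Jac}(\Gamma^\dagger) = \mathrm{Jac}(\Psi(\tGa))$ by \cref{structureOfImage}, which identifies $\Psi(\tGa)$ with $\Gamma^\dagger$ of genus $g_\Gamma - 1$. So it suffices to match the two bases of cycles.

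I would first define $f_\#\colon \mathrm{Ker}\,\pi_* \to H_1(\Gamma^\dagger,\ZZ)$ on the basis $\calC = \{\wt\gamma_i^+ - \iota\wt\gamma_i^+\}_{i=1}^{g-1}$ by $\wt\gamma_i^+ - \iota\wt\gamma_i^+ \mapsto \Psi_*(\wt\gamma_i^+)$, and dually $f^\#\colon H^1(\Gamma^\dagger,\ZZ)\cong H_1(\Gamma^\dagger,\ZZ) \to (\Coker\,\pi^*)^{tf}$ by $\gamma_j^\dagger \mapsto [\wt\gamma_j^+ - \iota\wt\gamma_j^+]$. The essential input, which I would extract from the edge-by-edge description of the cycles set up before the theorem, is that with the spanning tree $\calT^\dagger$ chosen to contain the two copies of the weakly-fixed tree together with $e_g^\dagger$, the pushforward $\Psi_*(\wt\gamma_i^+)$ is exactly the fundamental cycle $\gamma_i^\dagger$ of the complementary edge $e_i^\dagger$; here the orientation-compatible bijection between the edges of $\wt\gamma_i^+$ and those of $\gamma_i^\dagger$ does the work. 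Since each map then sends a basis bijectively onto a basis, both are group isomorphisms, and the adjointness relation demanded of a homomorphism of integral tori collapses, on basis elements, to the single identity
\[
[\gamma_i^\dagger, \gamma_j^\dagger]_1 = [\wt\gamma_i^+ - \iota\wt\gamma_i^+,\ \wt\gamma_j^+ - \iota\wt\gamma_j^+]_2,
\]
where $[\cdot,\cdot]_1$ is the integration pairing on $\mathrm{Jac}(\Gamma^\dagger)$ and $[\cdot,\cdot]_2$ the Prym pairing.

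The crux, and the step I expect to be the main obstacle, is verifying this pairing identity, which I would do by summing the contribution of each edge of $\Gamma$ to both sides. Three independent factors of $2$ intervene and must cancel against one another: the Prym pairing $[\cdot,\cdot]_2$ is the Jacobian intersection pairing \emph{divided by} $2$; the edges of the weakly-fixed tree are \emph{stretched} by a factor of $2$ when passing from $\Gamma$ to $\Gamma^\dagger$; and the antisymmetric cycles $\wt\gamma_i^+ - \iota\wt\gamma_i^+$ carry multiplicity $2$ precisely along the preimage of the weakly-fixed tree. For an edge $e$ outside the weakly-fixed tree, $e^\dagger$ has length $\ell(e)$ and the reflection $\iota\tilde e$ is absent from $\iota\wt\gamma^+$, so---writing $\epsilon_i,\epsilon_j\in\{-1,0,1\}$ for the coefficients of $e^\dagger$ in $\gamma_i^\dagger,\gamma_j^\dagger$---both sides receive $\epsilon_i\epsilon_j\ell(e)$ after the division by $2$. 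For an edge $e$ inside the weakly-fixed tree, $e^\dagger$ has length $2\ell(e)$ while $\iota\tilde e$ now appears, inflating the coefficients to $2\epsilon_i,2\epsilon_j$, and both sides land on $2\epsilon_i\epsilon_j\ell(e)$. The entire difficulty lies in keeping this triple bookkeeping of the factor $2$ consistent across the two cases; once it checks out, summation over all edges shows the bilinear forms agree, so $f$ is an isomorphism of integral tori respecting the principal polarisations, hence a pptav isomorphism, proving the theorem.
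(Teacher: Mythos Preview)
Your proposal is correct and follows essentially the same approach as the paper: you define the same maps $f_\#$ and $f^\#$ on the distinguished bases, reduce to the identity $[\gamma_i^\dagger,\gamma_j^\dagger]_1 = [\wt\gamma_i^+ - \iota\wt\gamma_i^+,\ \wt\gamma_j^+ - \iota\wt\gamma_j^+]_2$, and verify it by the same edge-by-edge case split on whether $e$ lies in the weakly-fixed tree. Your explicit framing of the three competing factors of $2$ (the Prym division, the stretching in $\Gamma^\dagger$, and the doubled multiplicities over the weakly-fixed tree) is a helpful way to organise the bookkeeping, but the underlying computation is identical to the paper's.
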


 \subsection{The tropical bigonal construction and the Abel--Prym graph}

In this final section, we explore a strong connection, illuminated by the referee, between the image of the Abel--Prym map and the so-named bigonal construction.
We briefly recall the bigonal construction now and  refer to \cite{RZ_ngonal} for additional details and to \cite{Zakharov_Trigonal, RohrleSaillez_Donagi} for more recent developments on the topic.

Let $\pi\colon \wt\Gamma\to \Gamma$ be a harmonic double cover and $f\colon \Gamma \to K$
a harmonic morphism of degree 2, where $K$ is a metric tree. While the bigonal construction allows $\pi$ to be any harmonic morphism,  we will assume that it is free as we don't require the more general case. 
The output of the construction
 is a certain degree $4$ harmonic morphism from a graph $\wt\Pi$ to the tree $K$.

As a set, $\wt \Pi$  is
given by
\[
\wt{\Pi}=\{x_1+x_2\in \mathrm{Div}^2_+(\tGa)\big|\,\exists\, x\in K : \pi(x_1)+\pi(x_2)=\sum_{y\in f^{-1}(x)}d_k(y)\cdot y\}.
\]
Namely, for each point $x$ of $K$, we consider its pullback $f^*(x)$ to $\mathrm{Div}(\Gamma)$ (which consists of either two points of multiplicity 1 or a single point of multiplicity 2), then take all the degree 2 divisors in $\wt\Gamma$ whose pushforward to $\mathrm{Div}(\Gamma)$ is $f^*(x)$.

The morphism $\tilde p:\wt \Pi\to K$ is then given by sending to $x\in K$ all the points of $\wt \Pi$ induced by it. 
From \cite[Proposition 2.1]{RZ_ngonal}, the map $\tilde p$ is harmonic of degree $4$. When $f^*(x)$ consists of two distinct point, the fibre $\tilde p^{-1}(x)$ will consist of 4 distinct points of $\wt \Pi$, as there are two choices for the preimage of each point in the support of $f^*(x)$.  
When  $f^*(x)$ consists of a single point $y$ with multiplicity 2, the fibre $\tilde p^{-1}(x)$ will consist of $3$ distinct points:  denoting by $z$ a point of $\wt\Gamma$ such that $\pi(z) = y$,  the corresponding points of $\wt \Pi$ are $z+\tj z, 2z,$ and $2\tj z$. In the language of \cite{RZ_ngonal}, these points are referred to as Type IV and Type II respectively. 

When the fibre over an edge $e$ of $K$ consists of 4 different edges of $\wt \Pi$, each of those edges will be given length equal to the length of $e$ and $\tilde p$ is an isometry on those edges. When the fibre consists of 3 edges, the edges containing points of the form $z+\tj z$ will be  given length equal to the length of $e$ and $\tilde p$ maps them isometrically to $K$. 
Edges containing points of the form 
$2z$ and $2\tj z$ will be given  length equal to half the length of $e$ and $\tilde p$ will dilate those edges.

\begin{example}
   \cref{fig:counterexamplebigonal2} depicts the bigonal construction for a double cover of three loops.  \cref{fig:counterexamplebigonal} shows the Abel--Prym map for the same double cover. The image of the Abel--Prym map is almost identical to a component of the output of the bigonal construction. This will be explained in the next result.

    \begin{figure}[H]
    \centering
\begin{tikzpicture}
	\coordinate (0x) at (-5+1.5,0);
    \coordinate (5x) at (1-1.5,0);
	\coordinate (1x) at (0-1.5,0);
	\coordinate (3x) at (-4+1.5,0);
	\coordinate (a1x) at (1.5-4+2.5-1.5,2);
	\coordinate (a3x) at (-4+1.5,2);
	\coordinate (b1x) at (1.5-4+2.5-1.5,5);
	\coordinate (b3x) at (-4+1.5,5);
	\coordinate (2x) at (-2.5,0);
	\coordinate (4x) at (-1.5,0);
        \coordinate (a2x) at (-2.5,2);
	\coordinate (a4x) at (-1.5,2);
	\coordinate (b2x) at (-2.5,5);
	\coordinate (b4x) at (-1.5,5);
 \coordinate (a5x) at (1-1.5,2);
  \coordinate (b5x) at (1-1.5,5);
  \coordinate (a0x) at (-4.6+1.85,3.5);
  \coordinate (b0x) at (-3.4+1.15,3.5);

 \coordinate (0tree) at (-5+1.5,-2);
  \coordinate (5tree) at (1-1.5,-2);
                \coordinate (1tree) at (1.5-2+0.5-1.5,-2);
	\coordinate (3tree) at (-4+1.5,-2);
    \coordinate (2tree) at (-2.5,-2);
	\coordinate (4tree) at (-1.5,-2);

    \coordinate (0treer) at (-5+9+1.5,-2);
  \coordinate (5treer) at (10-1.5,-2);
                \coordinate (1treer) at (1.5-2+0.5+9-1.5,-2);
	\coordinate (3treer) at (-4+9+1.5,-2);
    \coordinate (2treer) at (-2.5+9,-2);
	\coordinate (4treer) at (-1.5+9,-2);

     \coordinate (a0treer) at (-5+9+1.5,2.2-2);
  \coordinate (a5treer) at (10-1.5,2.2-2);
                \coordinate (a1treer) at (1.5-2+0.5+9-1.5,2.2-2);
	\coordinate (a3treer) at (-4+9+1.5,0.2);
    \coordinate (a2treer) at (-2.5+9,2.2-2);
	\coordinate (a4treer) at (-1.5+9,2.2-2);

 \coordinate (a'0treer) at (-5+9+1.5,1.8-2);
  \coordinate (a'5treer) at (10-1.5,1.8-2);
                \coordinate (a'1treer) at (1.5-2+0.5+9-1.5,1.8-2);
	\coordinate (a'3treer) at (-4+9+1.5,1.8-2);
    \coordinate (a'2treer) at (-2.5+9,1.8-2);
	\coordinate (a'4treer) at (-1.5+9,1.8-2);

     \coordinate (b0treer) at (-5+9+1.5,2.8);
  \coordinate (b5treer) at (10-1.5,3.3);
                \coordinate (b1treer) at (1.5-2+0.5+9-1.5,3.3);
	\coordinate (b3treer) at (-4+9+1.5,3.3);
    \coordinate (b2treer) at (-2.5+9,3.3);
	\coordinate (b4treer) at (-1.5+9,3.3);

 \coordinate (b'0treer) at (-5+9+1.5,2.8);
  \coordinate (b'5treer) at (10-1.5,2.3);
                \coordinate (b'1treer) at (9-1.5,2.3);
	\coordinate (b'3treer) at (-4+9+1.5,2.3);
    \coordinate (b'2treer) at (-2.5+9,2.3);
	\coordinate (b'4treer) at (-1.5+9,2.3);

     \coordinate (d1) at (9-1.5,5);
	\coordinate (d3) at (5+1.5,5);
    \coordinate (d2) at (-2.5+9,5);
	\coordinate (d4) at (-1.5+9,5);
\coordinate (d0) at (4+1.5,5.5);
\coordinate (d0') at (4+1.5,4.5);
    \coordinate (d5) at (10-1.5,5);
		\foreach \i in {d0,d0',d1,d2,d3,d4, 1tree,2tree,3tree,5tree,4tree,0tree, 1treer,2treer,3treer,5treer,4treer,0treer, a1treer,a2treer,a5treer,a4treer,a0treer, a'1treer,a'2treer,a'3treer,a3treer,
        a'5treer,a'4treer,a'0treer,  b1treer,b2treer,b3treer,b5treer,b4treer,b0treer, b'1treer,b'2treer,b'3treer,b'5treer,b'4treer}
		\draw[fill=black](\i) circle (0.15em);
		\draw[thick, black] (3x)--(2x);
		\draw[thick, black] (4x)--(1x);

        \foreach \i in {b'0treer,a1x,a2x,a4x,b0treer,b2x,b4x,a3x,b1x,b3x,0x,5x,a5x,b5x,1x,a0x,b0x,2x,4x,3x, d5, d3, d2, d4, d1}
		\draw[fill=black](\i) circle (0.30em);

\node[circle, draw, minimum size=1cm] (c) at (9.5-1.5,5) {};
\node[circle, draw, minimum size=1cm] (c) at (7,5) {};
            
				\node[circle, draw, minimum size=1cm] (c) at (-0.5-4+1.5,0) {};
                \node[circle, draw, minimum size=1cm] (c) at (-2,2) {};
                \node[circle, draw, minimum size=1cm] (c) at (-2,5) {};
				\node[circle, draw, minimum size=1cm] (c1) at (2-4+1,0) {};
					\node[circle, draw, minimum size=1cm] (c3) at (-2,0) {};
				\node[circle, draw, minimum size=1cm] (c3) at (0.5-1.5,2) {};
                \node[circle, draw, minimum size=1cm] (c3) at (0.5-1.5,5) {};
				\draw[thick, black] (a3x)--(a2x);
                \draw[thick, black] (a4x)--(a1x);
				\draw[thick, black] (b3x)--(b2x);
                \draw[thick, black] (b4x)--(b1x);
			 \draw[thin, black] (-5+1.5,-2)--(1-1.5,-2);
             \draw[thick, black] (d0')--(d3);
             \draw[thick, black] (d0)--(d3);
				 \draw[thin, black] (b3x) to [out=180+75, in=90+15] (a3x);
 \draw[thin, black] (b3x) to [out=-75, in=75] (a3x);
 
\draw[thin, black] (4+1.5,-2)--(10-1.5,-2);
            \draw[thin, black] (4+1.5,-0.2)--(10-1.5,-0.2);

            \draw[thin, black] (4+1.5,0.2)--(10-1.5,0.2);
\draw[thick, black] (d3)--(d2);
\draw[thick, black] (b0treer)--(b3treer);
\draw[thick, black] (b'0treer)--(b'3treer);
\draw[thick, black] (d4)--(d1);
 \draw[thin, black] (5+1.5,3.3)--(10-1.5,3.3);
  \draw[thin, black] (5+1.5,2.3)--(10-1.5,2.3);
				\draw (-2,1.4) node [below] {$\downarrow$};
                \draw (7,1.4) node [below] {$\downarrow$};
                \draw (-2,-0.8) node [below] {$\downarrow$};
                \draw (7,-0.8) node [below] {$\downarrow$};
				\draw (-5.5,4) node [above] {$\tGa$};
                \draw (-5.5,4) node [below] {$\xdownarrow{1.6cm}$};
                \draw (-5.5,0) node [above] {$\Ga$};
                \draw (-5.5,0) node [below] {$\Bigg\downarrow$};
                 \draw (-5.5,-2) node [above] {$K$};

                 \draw (3.5,3.7) node [above] {$\widetilde \Pi$};
                 \draw (3.5,3.7) node [below] {$\xdownarrow{1.5cm}$};
                \draw (3.5,0) node [above] {$\Pi$};
                 \draw (3.5,0) node [below] {$\Bigg\downarrow$};
                 \draw (3.5,-2) node [above] {$K$};
			
					\end{tikzpicture}
 \caption{The bigonal construction for the double cover in \cref{fig:counterexamplebigonal}. The thickness on the vertices corresponds to the dilation factor given by the cover maps, namely the fixed points of the involutions.}
    \label{fig:counterexamplebigonal2}
\end{figure}

     \begin{figure}[H]
    \centering
\begin{tikzpicture}

	\coordinate (1) at (6,3.5);
	\coordinate (1x) at (1.5-2+0.5-1.5,0);
	\coordinate (3x) at (-4+1.5,0);
	\coordinate (a1x) at (1.5-4+2.5-1.5,2);
	\coordinate (a3x) at (-4+1.5,2);
	\coordinate (b1x) at (1.5-4+2.5-1.5,5);
	\coordinate (b3x) at 
    (-4+1.5,5);
	\coordinate (2x) at (-2.5,0);
	\coordinate (4x) at (-1.5,0);
        \coordinate (a2x) at (-2.5,2);
	\coordinate (a4x) at (-1.5,2);
	\coordinate (b2x) at (-2.5,5);
	\coordinate (b4x) at (-1.5,5);
				
		\foreach \i in {1x,2x,4x,3x,a1x,1,a2x,a4x,b2x,b4x,a3x,b1x,b3x}
		\draw[fill=black](\i) circle (0.15em);
		\draw[thin, black] (3x)--(2x);
		\draw[thin, black] (4x)--(1x);

\node[circle, draw, minimum size=1cm] (c) at (5.5,3.5) {};
\node[circle, draw, minimum size=1cm] (c) at (6.5,3.5) {};

				\node[circle, draw, minimum size=1cm] (c) at (-0.5-4+1.5,0) {};
                \node[circle, draw, minimum size=1cm] (c) at (-2,2) {};
                \node[circle, draw, minimum size=1cm] (c) at (-2,5) {};
				\node[circle, draw, minimum size=1cm] (c1) at (2-4+2.5-1.5,0) {};
					\node[circle, draw, minimum size=1cm] (c3) at (-2,0) {};
				\node[circle, draw, minimum size=1cm] (c3) at (0.5-1.5,2) {};
                \node[circle, draw, minimum size=1cm] (c3) at (0.5-1.5,5) {};
				\draw[thin, black] (a3x)--(a2x);
                \draw[thin, black] (a4x)--(a1x);
				\draw[thin, black] (b3x)--(b2x);
                \draw[thin, black] (b4x)--(b1x);
			
				 \draw[thin, black] (b3x) to [out=180+75, in=90+15] (a3x);
 \draw[thin, black] (b3x) to [out=-75, in=75] (a3x);

            \draw (-4,3.5) node [above] {$\tGa$};
                \draw (-4,0.5) node [above] {$\Ga$};
				\draw (-2,1.4) node [below] {$\downarrow$};
                \draw (-2,1.1) node [right] {$\pi$};
				\draw (3,3.5) node [below] {$\rightarrow$};
				\draw (3,3.5) node [above] {$\Psi$};
			
					\end{tikzpicture}
 \caption{Free double cover of the chain of $3$ loops by a chain of $5$ loops and the image $\Psi(\tGa)$ on the right.}
    \label{fig:counterexamplebigonal}
\end{figure}

\end{example}

The following result relates the bigonal construction with the Abel--Prym map.

 \begin{theorem}\label{thm:bigonalconstruction}
     Let $\pi\colon\tGa\to\Ga$ be a free double cover, where $\tGa$ and $\Ga$ are hyperelliptic metric graphs with involutions  $\tj$ and $j$ respectively,  such that the Abel--Prym map is finite. 
     Then the output of the tropical bigonal construction consists of two connected components. 
     One is isomorphic to the Abel--Prym graph and the other to the  tree $\tGa/\tj$, where the quotient is taken with respect to the minimal model for which $\tj$ doesn't flip any edges.

  \end{theorem}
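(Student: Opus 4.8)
\emph{Plan.} The plan is to realise all three relevant graphs as quotients of $\tGa$ by a Klein four-group and to exhibit the two components of the bigonal output as the images of two explicit divisor maps.

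First I would set up the group action. By \cref{commutative} the involutions $\iota$ and $\tj$ commute, so $G=\{1,\iota,\tj,\iota\tj\}$ is a Klein four-group acting on $\tGa$ by isometric harmonic automorphisms, and $\tGa/G=(\tGa/\iota)/j=\Ga/j=K$ is exactly the tree appearing in the bigonal construction via $f\colon\Ga\to K$. The three intermediate quotients are $\tGa/\iota=\Ga$, the hyperelliptic quotient $T:=\tGa/\tj$ (a tree, computed in the minimal model where $\tj$ flips no edges), and, since finiteness of $\Psi$ makes $\tGa$ be $2$-edge-connected, $\tGa/(\iota\tj)=\Psi(\tGa)$ by \cref{lem:quotientbyinvolution}.

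Next I would define two maps into $\wt\Pi$:
\[
\beta\colon T\to\wt\Pi,\ \ \{p,\tj p\}\mapsto p+\tj p,
\qquad
\alpha\colon \Psi(\tGa)\to\wt\Pi,\ \ \{p,\iota\tj p\}\mapsto p+\iota\tj p .
\]
Both land in $\wt\Pi$: indeed $\pi(p)+\pi(\tj p)=\pi(p)+j\pi(p)=f^{\ast}(f\pi(p))$, and likewise for $\alpha$ using $\pi\iota=\pi$, so the required point of $K$ is $x=f\pi(p)$. They are well defined on $G$-orbits. The key set-theoretic input is that every $D=x_1+x_2\in\wt\Pi$ has exactly one of these two shapes: from $\pi x_1+\pi x_2=f^{\ast}(x)$ one gets $\pi x_2=j\pi x_1$, hence $x_2\in\pi^{-1}(\pi\tj x_1)=\{\tj x_1,\iota\tj x_1\}$, so $D=x_1+\tj x_1$ (tree type) or $D=x_1+\iota\tj x_1$ (Abel--Prym type), and these options are distinct because $\tj x_1=\iota\tj x_1$ would give an $\iota$-fixed point. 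Disjointness of the two images is again where I would use that $\pi$ is free: an equality $p+\tj p=q+\iota\tj q$ forces either $\tj p=\iota\tj p$ or $\tj q=\iota\tj q$, each exhibiting a fixed point of the fixed-point-free involution $\iota$. Thus $\alpha$ and $\beta$ are injections with disjoint images whose union is all of $\wt\Pi$.

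Finally I would upgrade these set bijections to isomorphisms of metric graphs and read off the components. Over the interior of a generic edge of $K$ the fibre of $\tilde p$ has four points and $\tilde p$ is a local isometry, and there $\alpha,\beta$ are plainly isometric onto the two tree-type and two Abel--Prym-type edges; so the only places needing care are the images of the fixed points of $j$, and in particular of the weakly-fixed tree $T_f$ of \cref{def:weaklyFixed}. Here I would invoke the finiteness hypothesis: by \cref{thm:behaviousOnEdges} and \cref{lem:CollapsibleLocusOfHyperelliptic}, a finite Abel--Prym map forces $\tGa$ to be $2$-edge-connected and $T_f$ to be disconnecting, so that no edge lies in the cyclic $\Psi$-collapsible locus and $\alpha,\beta$ never have to match a contraction. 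The main obstacle is then the bookkeeping of the branch and dilation data at these vertices: at the image of the weakly-fixed tree the two \emph{tree}-type edges merge (as $\tj$ swaps the two relevant lifts) while the two Abel--Prym-type edges stay distinct, whereas at every other $j$-fixed point the roles are reversed (since there $\iota\tj$ swaps the lifts); one must check that these merging patterns, together with the length-halving/dilation rules of the bigonal construction over $f(T_f)$ and over the pointwise $j$-fixed bridges of $\Ga$, agree with the dilation-by-two of $\Psi$ on the weakly-fixed edges recorded in \cref{thm:behaviousOnEdges}(2) and in the proof of \cref{lem:quotientbyinvolution}, and with the explicit model $\Gamma^{\dagger}$ of \cref{structureOfImage}. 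A short case analysis at these vertices, comparing limits of the tree-type and Abel--Prym-type divisors, shows that $\alpha$ and $\beta$ preserve edge lengths and dilation factors. Since $\Psi(\tGa)$ and $T$ are connected, their images are connected; being disjoint and covering $\wt\Pi$, they are precisely its two connected components, one isomorphic to the Abel--Prym graph $\Psi(\tGa)$ and the other to the tree $\tGa/\tj$, as claimed.
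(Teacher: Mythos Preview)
Your approach is essentially the same as the paper's. The paper defines precisely your maps $\alpha$ and $\beta$ (there called $\psi$ and $\phi$), checks they are well-defined bijections onto disjoint subsets whose union is $\wt\Pi$, verifies the metric by matching the dilation-by-two edges of the bigonal output with those of $\Psi$, and then invokes \cite[Proposition~2.5]{RZ_ngonal} to say $\wt\Pi$ has two components; your Klein four-group framing and your more explicit case analysis at the $j$-fixed points are a pleasant conceptual wrapper but do not change the argument.
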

  
 \begin{proof}
 Let $x\in K$. Then $f^*(x) = y + j(y)$ for some $y\in\Gamma$. If $z\in\wt\Gamma$ is such that $\pi(z) = y$, then the points of $\wt\Pi$ in the fibre of $x$ are the divisors
 \[
 z + \tj z,\,\,\, z+\iota\tj z,\,\,\, \iota z + \tj z,\,\,\, \text{ and } \iota z + \iota\tj z.
 \]

We will show that one connected component of $\wt \Pi$, denoted $\Pi_1$  consists of the points of the form $w + \iota\tj w$ and is isomorphic to $\wt\Gamma/(\iota\circ\tj)$, while the other, denoted $\Pi_2$,  consists of  points of the form $w + \tj w$ and is isomorphic to $\wt\Gamma/\tj$.

Define a map  $\psi:\tGa/(\iota\circ\tj)\to \Pi_1$ by sending each class $[z]\in \tGa/(\iota\circ\tj)$ to $z + \iota\tj z$. The map is well defined on equivalence classes since it sends $z$ and $\iota\tj z$ to the same point in $\Pi$. It is injective since, whenever $\psi(z) = \psi(w)$, it follows that $z + \iota\tj z = w + \iota\tj w$, so $\Psi(z) = \Psi(w)$. It is surjective onto $\Pi_1$ since, for all $w+\iota\tj w$, we have $\psi(w) = w+\iota\tj w$. The map is also an isometry: by definition, the edges of $\Pi$ that are dilated by a factor of 2 are precisely those mapping to bridges of $K$. By the assumption that $\wt\Gamma$ is 2-edge-connected, the preimage in $\wt\Gamma$ of the bridges of $K$ are precisely the properly disconnected pairs, namely the edges dilated by a factor of 2 by $\Psi$ as in \cref{thm:behaviousOnEdges}. 

Next, we define a map $\phi:\wt\Gamma/\tj\to \Pi_2$ via $\phi([z]) = z + \tj z$. Then $\phi$ is well-defined since it maps $z$ and $\tj z$ to the same point of $\Pi_2$ and is an isomorphism similarly to $\psi$.

Now, the images of $\psi$ and $\phi$ are disjoint and are each connected. From \cite[Proposition 2.5]{RZ_ngonal}, the graph $\wt \Pi$ has two connected components. 
Since the union of the images is all of $\wt \Pi$, it follows that each image is a connected component of $\wt \Pi$.

 \end{proof}

\begin{remark}\label{rem:bigonalForNonFiniteAP}
When the Abel--Prym map is not finite, $\Psi(\wt\Gamma)$ differs from a component of the bigonal construction in two ways. First,  the bridges of   $\wt\Gamma$ are contracted by $\Psi$ but are left intact by the bigonal construction.  Second, if the cyclic $\Psi$-collapsible locus is non-empty, then from \cref{lem:CollapsibleLocusOfHyperelliptic} it consists of a single properly disconnecting pair covering a non-disconnecting edge. The edges of the pair are  folded in half and become leaves in the bigonal construction. Said differently, in the 2-edge-connected (but not necessarily finite) case, the bigonal construction gives rise to the topological quotient $\wt\Gamma/(\iota\circ\tj)$ whereas the Abel--Prym graph is the quotient with respect to the canonical model. 
\end{remark}

Note that the hyperellipticity assumption on $\wt\Gamma$ is necessary in the theorem. Namely, it's not true in general that the Abel--Prym graph coincides with a component of the bigonal construction.

\begin{example}\label{ex:bigonalForNonHyperelliptic}
    Let $\Gamma$ be the hyperelliptic chain of 3 loops and $\wt\Gamma$ the 2-edge-connected non-hyperelliptic free double cover as in \cref{fig:nonhypcase}.  The Abel--Prym graph has genus 3. On the other hand, the bigonal construction gives rise to two connected components of genus 1, see Figure 2 in \cite[Section 2.2]{RZ_ngonal}.

            \begin{figure}[h]
                \centering\begin{tikzpicture}
				
				\coordinate (1x) at (0,0);
				\coordinate (3x) at (-4,0);
				\coordinate (a1x) at (0,2);
				\coordinate (a3x) at (-4,2);
				\coordinate (b1x) at (1.5-4+2.5,5);
				\coordinate (b3x) at (-4,5);

                \coordinate (da1x) at (1.5+4+2.5+1,2);
				\coordinate (da3x) at (1+3-1,2);
				\coordinate (db1x) at (1.5-4+2.5+5+3+1,5);
				\coordinate (db3x) at (1+3-1,5);
\coordinate (da2x) at (-2.5+5+3,2);
					\coordinate (da4x) at (-1.5+5+3,2);
						\coordinate (db2x) at (-2.5+5+3,5);
					\coordinate (db4x) at (-1.5+5+3,5);

					\coordinate (2x) at (-2.5,0);
					\coordinate (4x) at (-1.5,0);
						\coordinate (a2x) at (-2.5,2);
					\coordinate (a4x) at (-1.5,2);
						\coordinate (b2x) at (-2.5,5);
					\coordinate (b4x) at (-1.5,5);
				
				\foreach \i in {1x,2x,4x,3x,da1x,a1x,da2x,a2x,da4x,a4x,db2x,b2x,db4x,b4x,da3x,a3x,db1x,b1x,db3x,b3x}
				\draw[fill=black](\i) circle (0.15em);
				\draw[thin, black] (3x)--(2x);
					\draw[thin, black] (4x)--(1x);

				\node[circle, draw, minimum size=1cm] (c) at (-0.5-4,0) {};
				\node[circle, draw, minimum size=1cm] (c1) at (2-4+2.5,0) {};
                \node[circle, draw, minimum size=1cm] (c1l) at (-2,2) {};
                \node[circle, draw, minimum size=1cm] (c1ll) at (-2,5) {};
                \node[circle, draw, minimum size=1cm] (dc1l) at (-2+5+3,2) {};
                \node[circle, draw, minimum size=1cm] (dc1ll) at (-2+5+3,5) {};
					\node[circle, draw, minimum size=1cm] (c3) at (-2,0) {};
				
				\draw[thin, black] (a3x)--(a2x);
                \draw[thin, black] (a4x)--(a1x);
				\draw[thin, black] (b3x)--(b2x);
			\draw[thin, black] (b4x)--(b1x);
            \draw[thin, black] (da3x)--(da2x);
                \draw[thin, black] (da4x)--(da1x);
				\draw[thin, black] (db3x)--(db2x);
			\draw[thin, black] (db4x)--(db1x);
				\draw[thin, black] (da1x)--(db1x);
			\draw[thin, black] (db3x)--(da3x);
				\draw (-2,1.3) node [below] {$\downarrow$};
				\draw (2,3.5) node [below] {$\rightarrow$};
\draw (2,3.5) node [above] {$\Psi$};
                
				\draw[thin, black] (b3x) to [out=-45, in=45] (a3x);
				\draw[thin, black] (b3x) to [out=180+45, in=90+45] (a3x);
				\draw[thin, black] (b1x) to [out=-45, in=45] (a1x);
				\draw[thin, black] (b1x) to [out=180+45, in=90+45] (a1x);
			
					\end{tikzpicture}
                \caption{A free double cover where the source is not hyperelliptic and the corresponding Abel--Prym graph.}
          \label{fig:nonhypcase}
            \end{figure}
				
\end{example}

\begin{question}
Does the Abel--Prym graph coincide with a component of the bigonal construction only in the hyperelliptic case?  Is there a natural connection between the two constructions when they don't coincide?      
\end{question}

The involution $\iota$ on $\wt\Gamma$ induces an involution on  $\wt \Pi$  that we also denote $\iota$ by abuse of notation. Modding by $\iota$, we see that $\wt \Pi$ is a (ramified) harmonic double cover of a graph $\Pi$. Furthermore, $\Pi$ consists of two trees. 
It is straightforward that the map $\wt \Pi\to K$ factors through the  double cover $\wt \Pi\to \Pi$. In particular, we get a tower $\wt \Pi\to \Pi\to K$ of double covers. 
As a consequence, we obtain the following theorem extending Theorem 5.6 in \cite{RZ_ngonal} to the free case.

\begin{corollary}\label{cor:bigonalConstruction}

    Let $\tGa\overset{\pi}{\to}\Ga\overset{f}{\to}K$ be a tower of harmonic double covers  of metric graphs, where $\pi$ is free and $\wt\Gamma$ is 2-edge-connected, and let $\wt \Pi$ and $\Pi$ be the graphs obtained from the bigonal construction. 
    Then there is an isomorphism of polarised tropical abelian varieties
    \[
\Prym(\tilde{\Pi}/\Pi)^{\vee}\cong \Prym(\tGa/\Ga).
\]
Furthermore, the bigonal construction is an involution in this case. That is, the double cover obtained by applying the bigonal construction to $\wt \Pi/\Pi$ is $\wt\Gamma/\Gamma$.

\end{corollary}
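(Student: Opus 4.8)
The plan is to derive both assertions from the combinatorial description of $\wt\Pi$ over the tree $K$, reducing the Prym statement to a comparison of lattices of antisymmetric cycles in the spirit of \cref{pptavs}. First I would record the fibrewise picture. Over a generic $x\in K$ we have $f^*(x)=y_1+y_2$, where $y_2$ is the image of $y_1$ under the deck involution of $f$, and writing $\pi^{-1}(y_1)=\{a,\iota a\}$, $\pi^{-1}(y_2)=\{b,\iota b\}$ the fibre of $\wt\Pi\to K$ is $\{a+b,\ a+\iota b,\ \iota a+b,\ \iota a+\iota b\}$. The involution $\iota$ swaps $a+b\leftrightarrow\iota a+\iota b$ and $a+\iota b\leftrightarrow\iota a+b$, so it preserves each of the two connected components $\Pi_1,\Pi_2$ guaranteed by \cite[Proposition 2.5]{RZ_ngonal}. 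Consequently $\Pi=\wt\Pi/\iota$ is a disjoint union of two copies of the tree $K$ and each $\Pi_i\to K$ is a harmonic double cover; in particular $\Jac(\Pi)=0$, so $\Prym(\wt\Pi/\Pi)$ has cocharacter lattice all of $H_1(\wt\Pi)=H_1(\Pi_1)\oplus H_1(\Pi_2)$, with polarisation the integration pairing divided by $2$.

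For the second assertion I would run the classical Pantazis--Donagi argument tropically. Feeding the tower $\wt\Pi\to\Pi\to K$ back into the bigonal construction, the new fibre over $x$ consists of the degree-two divisors on $\wt\Pi$ pushing forward to $(\Pi\to K)^*(x)$; using the explicit fibre above, I would exhibit a natural bijection of this fibre with $\pi^{-1}f^{-1}(x)\subseteq\wt\Gamma$ intertwining the two deck involutions, the point being that the incidence relation $\pi(x_1)+\pi(x_2)=f^*(x)$ defining $\wt\Pi$ is symmetric under interchanging the two towers. This yields an isomorphism of the underlying graphs and of the intermediate covers; the only genuine computation is that the edge lengths agree, which requires checking that the halving-and-dilation rule of the construction over bridges of $K$ is undone by a second application.

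For the Prym duality I would compare lattices directly, adapting the proof of \cref{pptavs}. Dualising $\Prym(\wt\Pi/\Pi)$ interchanges its character and cocharacter lattices, so the claim amounts to identifying $\Ker(\wt\Pi\to\Pi)_*=H_1(\wt\Pi)$ with $(\Coker\pi^*)^{tf}$ and, dually, the antisymmetric cycles $\Ker\pi_*$ of $\wt\Gamma$ with $(\Coker(\wt\Pi\to\Pi)^*)^{tf}$, compatibly with the pairings. Using the bases of \cref{sec:JacobianVSPrym}, the bigonal correspondence sends a basis cycle $\wt\gamma_i^+-\iota\wt\gamma_i^+$ of $\Ker\pi_*$ to a cycle of $\wt\Pi$, and I would verify that this is a lattice isomorphism onto a basis of $H_1(\wt\Pi)$. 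It then remains to match the two integration pairings edge by edge, exactly as in \cref{pptavs}: the edges of $\wt\Pi$ lying over bridges of $K$ are dilated by a factor of $2$, and this factor, together with the division by $2$ in the Prym pairing, is precisely what converts the non-principal polarisation of $\Prym(\wt\Pi/\Pi)$ into the principal polarisation of $\Prym(\wt\Gamma/\Gamma)$ after dualising.

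I expect the principal difficulty to be this last bookkeeping of polarisations, and in particular the appearance of the dual $(-)^{\vee}$. Because $\pi$ is free while $\wt\Pi\to\Pi$ is ramified, the two Prym varieties carry polarisations of different types, and one must track how the length-doubling on the ramification (dilated) edges interacts with the factor of $2$ intrinsic to the Prym pairing; this is exactly the feature absent from the setting of \cite{RZ_ngonal} and is what must be supplied in order to extend their Theorem 5.6 to free covers. A secondary subtlety is the non-generic fibres of $f$, where $f^*(x)$ is a single point of multiplicity $2$ and the Type II and Type IV points of \cite{RZ_ngonal} appear, so that the local lattice computation there must be carried out by hand.
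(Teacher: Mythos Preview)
Your approach is sound in outline but takes a considerably longer route than the paper. The paper's proof is essentially two sentences: by \cref{thm:bigonalconstruction}, one component of $\wt\Pi$ \emph{is} the Abel--Prym graph $\Psi(\tGa)$ and the other is a tree, while $\Pi$ is a union of trees; hence $\Prym(\wt\Pi/\Pi)$ reduces to $\Jac(\Psi(\tGa))$, and \cref{pptavs} already identifies this with $\Prym(\tGa/\Ga)$. The involution assertion is simply cited from \cite[Proposition 2.4]{RZ_ngonal}, since a free cover is automatically generic in their sense. In other words, all the lattice bookkeeping you propose to redo has already been carried out in \cref{pptavs}, and the point of \cref{thm:bigonalconstruction} is precisely to translate that computation into the bigonal language. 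Your worry about the dual $(-)^{\vee}$ and the non-principal polarisation on $\Prym(\wt\Pi/\Pi)$ is legitimate for a direct argument, but the paper sidesteps it entirely by passing through a Jacobian, which is self-dual. One small inaccuracy in your setup: you assert that the Prym pairing on $\Prym(\wt\Pi/\Pi)$ is the integration pairing divided by $2$, but $\wt\Pi\to\Pi$ is dilated rather than free, so this is exactly the place where the factor of $2$ need not be uniformly available; the paper's route via $\Jac(\Psi(\tGa))$ avoids having to sort this out. Your approach would have the advantage of being closer to a proof that does not rely on the hyperelliptic hypothesis on $\tGa$ (and thus to the general statement in \cite{RZ_ngonal}), but as written it still invokes the bases of \cref{sec:JacobianVSPrym}, which are specific to the hyperelliptic setting.
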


\begin{proof}
    We may assume that the Abel--Prym map is finite since contracting the $\Psi$-contracting locus does not affect the Prym variety. 
    The fact that the construction is an involution follows from \cite[Proposition 2.4]{RZ_ngonal}, since the double cover $\wt\Gamma/\Gamma$ is generic in the sense of \cite[Definition 2.3]{RZ_ngonal}. 
From \cref{mainCorollary:JacobianOfImage}, the Prym variety $\Prym(\wt\Gamma/\Gamma)$ is isomorphic to the Jacobian of $\Psi(\tGa)$.  As shown in \cref{thm:bigonalconstruction}, the graph $\wt \Pi$ is the disjoint union of a tree and $\Psi(\tGa)$, equipped with a ramified double cover of a disjoint union of trees.  The Prym variety of a graph covering a tree coincides with the Jacobian of the graph. The result now follows.

\end{proof}

  \bibliographystyle{alpha}
 \bibliography{PrymBib}

\begin{thebibliography}{ACGH85}

\bibitem[AAP22]{AbreuAndriaPacini_AbelMaps}
Alex Abreu, Sally Andria, and Marco Pacini.
\newblock Abel maps for nodal curves via tropical geometry.
\newblock {\em Math. Comp.}, 91(336):1971--2025, 2022.

\bibitem[ABBR15]{ABBR}
Omid Amini, Matthew Baker, Erwan Brugall\'e, and Joseph Rabinoff.
\newblock Lifting harmonic morphisms {I}: metrized complexes and {B}erkovich
  skeleta.
\newblock {\em Res. Math. Sci.}, 2:Art. 7, 67, 2015.

\bibitem[ACGH85]{ACGHI}
Enrico Arbarello, Maurizio Cornalba, Phillip Griffiths, and Joseph Harris.
\newblock {\em Geometry of algebraic curves. {V}ol. {I}}, volume 267 of {\em
  Grundlehren der Mathematischen Wissenschaften [Fundamental Principles of
  Mathematical Sciences]}.
\newblock Springer-Verlag, New York, 1985.

\bibitem[Bak08]{BC}
Matthew Baker.
\newblock Specialization of linear systems from curves to graphs.
\newblock {\em Algebra Number Theory}, 2(6):613--653, 2008.
\newblock With an appendix by Brian Conrad.

\bibitem[BF11]{BF11}
Matthew Baker and Xander Faber.
\newblock Metric properties of the tropical {A}bel-{J}acobi map.
\newblock {\em J. Algebraic Combin.}, 33(3):349--381, 2011.

\bibitem[BL04]{BL04}
Christina Birkenhake and Herbert Lange.
\newblock {\em Complex abelian varieties}, volume 302 of {\em Grundlehren der
  mathematischen Wissenschaften [Fundamental Principles of Mathematical
  Sciences]}.
\newblock Springer-Verlag, Berlin, second edition, 2004.

\bibitem[Blo24]{Blomme_CurvesInAbelian1}
Thomas Blomme.
\newblock Tropical curves in abelian surfaces {I}: enumeration of curves
  passing through points.
\newblock {\em Math. Proc. Cambridge Philos. Soc.}, 177(1):109--148, 2024.

\bibitem[BN09]{BN09}
Matthew Baker and Serguei Norine.
\newblock Harmonic morphisms and hyperelliptic graphs.
\newblock {\em Int. Math. Res. Not. IMRN}, 2009(15):2914--2955, 2009.

\bibitem[Cha13]{CH}
Melody Chan.
\newblock Tropical hyperelliptic curves.
\newblock {\em J. Algebraic Combin.}, 37(2):331--359, 2013.

\bibitem[Cop16]{Coppens_Clifford}
Marc Coppens.
\newblock Clifford’s theorem for graphs.
\newblock {\em Adv. Geom.}, 16(3):389--400, 2016.

\bibitem[Cor21]{C}
Daniel Corey.
\newblock Tropical curves of hyperelliptic type.
\newblock {\em J. Algebraic Combin.}, 53(4):1215--1229, 2021.

\bibitem[Fac10]{Facchini_Clifford}
Laura Facchini.
\newblock On tropical {C}lifford's theorem.
\newblock {\em Ricerche Mat.}, 59:343--349, 2010.

\bibitem[Far76]{HMfarkashypcovers}
Hershel~M. Farkas.
\newblock Unramified double coverings of hyperelliptic surfaces.
\newblock {\em J. Analyse Math.}, 30:150--155, 1976.

\bibitem[JL18]{JensenLen_thetachars}
David Jensen and Yoav Len.
\newblock Tropicalization of theta characteristics, double covers, and {P}rym
  varieties.
\newblock {\em Selecta Math. (N.S.)}, 24(2):1391--1410, 2018.

\bibitem[Len17]{Len_Clifford}
Yoav Len.
\newblock Hyperelliptic graphs and metrized complexes.
\newblock {\em Forum Math. Sigma}, 5:Paper No. e20, 15, 2017.

\bibitem[Len22]{Len_Survey}
Yoav Len.
\newblock Chip-firing games, {J}acobians, and {P}rym varieties.
\newblock {\em arXiv:210.14060}, 2022.

\bibitem[LU21]{LenUlirsch_Prym}
Yoav Len and Martin Ulirsch.
\newblock Skeletons of {P}rym varieties and {B}rill--{N}oether theory.
\newblock {\em Algebra Number Theory}, 15(3):785--820, 2021.

\bibitem[LUZ24]{LenUlirschZakharov_abelianCovers}
Yoav Len, Martin Ulirsch, and Dmitry Zakharov.
\newblock Abelian tropical covers.
\newblock {\em Math. Proc. Cambridge Philos. Soc. M}, 176(2):395–416, 2024.

\bibitem[LZ22]{LZ22}
Yoav Len and Dmitry Zakharov.
\newblock Kirchhoff's theorem for {P}rym varieties.
\newblock {\em Forum Math. Sigma}, 10:Paper No. e11, 54, 2022.

\bibitem[Pan16]{Panizzut}
Marta Panizzut.
\newblock Theta characteristics of hyperelliptic graphs.
\newblock {\em Arch. Math. (Basel)}, 106(5):445--455, 2016.

\bibitem[RS25]{RohrleSaillez_Donagi}
Felix Röhrle and Thomas Saillez.
\newblock Tropical donagi theorem, 2025.

\bibitem[RZ24]{RZ_matroidal}
Felix Röhrle and Dmitry Zakharov.
\newblock A matroidal perspective on the tropical {P}rym variety.
\newblock {\em arXiv:2311.09872}, 2024.

\bibitem[RZ25]{RZ_ngonal}
Felix Röhrle and Dmitry Zakharov.
\newblock The tropical n-gonal construction.
\newblock {\em Algebraic Combinatorics}, 8(2):319--378, 2025.

\bibitem[Wal76]{Waller_DoubleCoversOfGraphs}
Derek~A. Waller.
\newblock Double covers of graphs.
\newblock {\em Bull. Austral. Math. Soc.}, 14:233--248, 1976.

\bibitem[Zak25]{Zakharov_Trigonal}
Dmitry Zakharov.
\newblock The trigonal construction and the second moment of the tropical prym
  variety, 2025.

\end{thebibliography}

 \end{document}